\theoremstyle{plain}
\newtheorem{thm}{Theorem}[section]
\newtheorem{lem}[thm]{Lemma}
\newtheorem{prop}[thm]{Proposition}
\newtheorem{cor}[thm]{Corollary}
\theoremstyle{definition}
\newtheorem{defn}{Definition}[section]
\newtheorem{exmp}{Example}[section]
\theoremstyle{remark}
\newtheorem*{rem*}{Remark}
\newtheorem{rem}[thm]{Remark}
\theoremstyle{definition}
\numberwithin{equation}{section}
\long\def\symbolfootnote[#1]#2{\begingroup%
\def\thefootnote{\fnsymbol{footnote}}\footnote[#1]{#2}\endgroup}
\newcommand{\CC}{\mathbb{C}}
\newcommand{\RR}{\mathbb{R}}
\newcommand{\R}{\mathbb{R}}
\newcommand{\NN}{\mathbb{N}}
\newcommand{\EE}{\mathbb{E}}
\newcommand{\ra}{ \rightarrow}
\newcommand{\g}{\gamma}
\newcommand{\lsh}{{\rm LSH}}
\newcommand{\e}{\epsilon}
\begin{document}

\title{Hypercontractivity for Log-Subharmonic Functions}

\author{Piotr Graczyk$^{(1)}$}
\address{$(1)$ Universit\'e d'Angers, 2 boulevard Lavoisier, 49045 Angers cedex 01, France}
\email{Piotr.Graczyk@univ-angers.fr,Jean-Jacques.Loeb@univ-angers.fr}
\author{Todd Kemp$^{(2)}$}
\address{$(2)$ 2-175, MIT \; 77 Massachusetts Avenue, Cambrdige, MA \; 02139}
\email{tkemp@math.mit.edu}
\author{Jean-Jacques Loeb$^{(1)}$}
\author{Tomasz \.Zak$^{(3)}$}
\address{$(3)$ Institute of Mathematics, Wroclaw University of Technology, 50--370 W. Wyspianskiego, Wroclaw, Poland}
\email{Tomasz.Zak@pwr.wroc.pl}

\date{\today}

\begin{abstract}
We prove  strong hypercontractivity (SHC) inequalities for logarithmically subharmonic functions on $\RR^n$ and different classes of measures: Gaussian measures on $\RR^n$, symmetric Bernoulli and symmetric uniform probability measures on $\RR$, as well as their convolutions.  Surprisingly, a slightly weaker strong hypercontractivity property holds for {\em any} symmetric measure on $\RR$.  For all measures on $\R$ for which we know the (SHC) holds, we prove that a log--Sobolev inequality holds in the log-subharmonic category with a constant {\em smaller} than the one for Gaussian measure in the classical context.  This result is extended to all dimensions for compactly-supported measures.
\end{abstract}

\maketitle

\symbolfootnote[0]{(2) This work was partially supported by NSF Grant DMS-0701162.}

\vspace{-0.4in}

\section{Introduction} \label{sect intro}

In this paper, we prove some important inequalities -- strong hypercontractivity (SHC) and a logarithmic Sobolev inequality -- for logarithmically subharmonic  functions (cf.\ Definition \ref{logsh} below.)  Our paper is inspired by work of Janson \cite{j}, in which he began the study of an important property of semigroups called {\bf strong hypercontractivity}.  A rich series of subsequent papers by Janson \cite{j2}, Carlen \cite{c}, Zhao \cite{z}, and recently by Gross (\cite{g,g2} and a survey \cite{g3}) was devoted to this subject on the spaces  $\CC^n$ and, in papers by Gross, on complex manifolds. In contrast to all the aforementionned papers, our results concern   the real spaces $\RR^n$.

\medskip

In the first part of the paper (Sections \ref{sect Hyp for Gaussian}--\ref{sect convolution}) we prove strong hypercontractivity in the log--subharmonic setting: for $0<p\le q<\infty$,
\begin{equation} \tag{SHC} \|T_t f\|_{L^q(\mu)}\le \|f\|_{L^p(\mu)} \quad \text{for} \quad t\ge \frac{1}{2}\log \frac{q}{p} \end{equation}
for the dilation semigroup $T_tf(x)=f(e^{-t}x)$, for any logarithmically subharmonic function $f$, for different classes of measures $\mu$: including Gaussian measures and some compactly supported measures on $\RR$ (symmetric Bernoulli and uniform probability measure on $[-a,a]$ for $a>0$). We also show that, in numerous important cases, the convolution of two measures satisfying (SHC) also satisfies (SHC).

\medskip

Let us note that in the theory of hypercontractivity for general measures, the semigroup considered is the one associated to the measure by the usual technology of Dirichlet forms.  The generator of the semigroup (on a complete Riemannian manifold) takes the form $-\Delta + X$ where $\Delta$ is the Laplace-Beltrami operator and $X$ is a vector field; hence, the semigroup restricted to {\em harmonic functions} on the manifold is simply the (backward) flow of $X$.  For Gaussian measure, $X = x\cdot\nabla$, yielding the above flow $T_t$; this vector field is often called the {\em Euler operator}, denoted $E$.  In a sense, the point of this paper is to show that the strong hypercontractivity theorems about this flow extend beyond harmonic functions to the larger class of logarithmically subharmonic functions.

\medskip

In the second part of the paper (Section \ref{sect Log Sobolev}) we show that a log--Sobolev inequality (LSI) in the log--subharmonic domain holds for Gaussian measure on $\RR$ and for all $1$-dimensional measures which satisfy the strong hypercontractivity (SHC) considered in the first part.  We also prove the general implication (SHC) $\Rightarrow$ (LSI) for compactly supported measures on $\RR^n$, still for log--subharmonic functions.  In both cases, the  (LSI) we get is {\em stronger} than the classical one in the following sense. Let
\[ t_N(p,q) = \frac{1}{2}\log\frac{q-1}{p-1}, \quad  t_J(p,q)=\frac{1}{2}\log\frac{q}{p} \]
denote the Nelson and Janson times (cf. \cite{n,j}), for $1<p\le q<\infty$ (in fact, $t_J$ makes sense for all positive $p\le q$). The classical hypercontractivity for $t\ge c\, t_N$ is equivalent, by Gross's theorem in \cite{g4}, to a logarithmic-Sobolev inequality with the constant $2c$:
\[ \int |f|^2\log |f|^2 d\mu -\|f\|_{2,\mu}^2 \log\|f\|_{2,\mu}^2 \le 2c \int f Lf d\mu \]
where $L$ is the positive generator of the semigroup.  We show that, in the category of logarithmically subharmonic functions, the strong hypercontractivity for $t\ge c\, t_J$  implies (LSI) with {\em constant $c$}:
\begin{equation} \tag{LSI} \int |f|^2\log |f|^2 d\mu -\|f\|_{2,\mu}^2 \log\|f\|_{2,\mu}^2 \le c\int f Ef d\mu \end{equation}
where $E$ is the Euler operator discussed above.  Hence, one cannot obtain this stronger LSI by simply restricting the classical Gaussian LSI to log--subharmonic functions.

\medskip

Let us note that the implication (SHC) $\Rightarrow$ (LSI) in the log--subharmonic case does not follow as easily as in the classical setting.  Indeed, if $f$ is log--subharmonic, the functions $f|_{[-N,N]}$ and $f{\bf 1}_{|f|< N}$ are  not log--subharmonic on $\R$, and the classical techniques of approximation  by more regular (e.g.\ compactly  supported or bounded) functions fail. Instead, the present approach is to approximate probability measures (e.g.\ Gaussian measures) by measures with compact support.  This requires proving some stronger versions of the DeMoivre--Laplace  Central Limit Theorem. These results, contained in Section \ref{CLT}, are interesting independently.

\medskip

Our principal reference for the basic preliminaries is the book \cite{a} which gives a very accessible survey on hypercontractivity and on logarithmic--Sobolev inequalities.

\bigskip

{\bf Acknowledgment}. We thank  A. Hulanicki for calling the attention of the first and third authors to hypercontractivity problems in the holomorphic category.  Thanks also go to L. Gross for many helpful conversations.

\section{log--subharmonic functions}

\begin{defn}\label{logsh}  An $L^1_{\mathrm{loc}}$ upper semi-continuous function $f: {\RR}^n \to [-\infty, +\infty)$, not identically equal to $-\infty$,  is called {\bf subharmonic} if for every $x,y\in{\RR}^n$, one has the inequality: 
\begin{equation}\label{mean}
f(x)\leq \fint_{O(n)} f(x+\alpha y)\,d\alpha
\end{equation} 
where $O(n)$ is the orthogonal group of  ${\RR}^n$ and $d\alpha$ is the normalized Haar measure on it. (The notation $\fint$ is a reminder that the measure in question is normalized.)  A non-negative function $g: {\RR}^n \to [0, +\infty)$ is called {\bf log--subharmonic} (abbreviated \lsh) if the function $\log g$ is subharmonic.
\end{defn} 

\begin{rem} \label{remark spherical average} Definition \ref{logsh} is evidently equivalent to insisting that $f(x) \le \fint_{\partial B(x,r)} f(t)\,\sigma(dt)$ for every $x\in\R^n$, where $\partial B(x,r)$ is the sphere of radius $r$ about the point $x$, and $\sigma$ is normalized Lebesgue measure on this sphere.  Frequently, subharmonicity is stated in terms of averages over solid balls $B(x,r)$ instead; the two approaches are equivalent for $L^1_{\mathrm{loc}}$ upper-semicontinuous functions. Subharmonic function (and ergo log-subharmonic functions) need not have very good local properties.  There are subharmonic functions that are discontinuous everywhere (see, for example, \cite{Sudallev}).  In some of what follows, it will be convenient to work with {\em continuous} \lsh\ functions; where this restriction is in place, we have stated it explicitly.  \end{rem}

\begin{exmp} The following examples of \lsh\ functions are well-known and easily verified.
\begin{enumerate}
\item A convex function is subharmonic. On $\R$, $f$ is subharmonic if and only if $f$ is convex. 
\item Let  $f$ be a holomorphic function on ${\CC}^n$. Then $|f|$ is  a log--subharmonic function (see \cite{h} or use Jensen's inequality).  Indeed, $\log|f|$ is actually harmonic on the complement of $\{f=0\}$.
\item Denote by $\langle\;,\,\rangle$ the scalar product on ${\RR}^n$, and fix $a\in {\RR}^n$. Then $x\mapsto \exp\langle a,x\rangle $ is a log--subharmonic function.
\end{enumerate}
\end{exmp}

\noindent The main content of the next proposition is item \ref{sum lsh}, which takes some work to prove and will be important in what follows.

\begin{prop}\label{properties} Let $f,g$ be \lsh, and let $p>0$.
\begin{enumerate}
\item \label{prod lsh} The product $fg$ is \lsh, as is $g^p$.
\item \label{sum lsh} The sum $f+g$ is \lsh.
\item \label{lsh -> sh} $f$ is subharmonic.
\end{enumerate}
\end{prop}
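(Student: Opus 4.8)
My plan is to treat the three items in the stated order, pushing everything to the logarithm, since by Definition~\ref{logsh} a nonnegative $f$ is \lsh\ exactly when $u:=\log f$ is subharmonic. For item~\ref{prod lsh} I would just observe $\log(fg)=u+v$ and $\log(g^p)=p\,v$, where $v:=\log g$. Adding the two sub-mean-value inequalities \eqref{mean} for $u$ and $v$ shows $u+v$ satisfies \eqref{mean}, and multiplying the inequality for $v$ by $p>0$ shows $pv$ does; since sums and positive multiples of $L^1_{\mathrm{loc}}$ upper-semicontinuous functions are again $L^1_{\mathrm{loc}}$ and u.s.c., and each stays $>-\infty$ on a set of full measure, both $u+v$ and $pv$ are subharmonic. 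This is routine.

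For item~\ref{lsh -> sh}, the point is that $f=e^{u}$ is the composition of the subharmonic $u$ with the convex nondecreasing function $\exp$ (with $e^{-\infty}=0$). I would record the general fact that $\phi\circ u$ is subharmonic whenever $u$ is subharmonic and $\phi$ convex nondecreasing: Jensen's inequality for the probability measure $\sigma$ on $\partial B(x,r)$ gives $\phi\bigl(\fint u\,d\sigma\bigr)\le\fint\phi(u)\,d\sigma$, while monotonicity of $\phi$ and $u(x)\le\fint u\,d\sigma$ give $\phi(u(x))\le\phi\bigl(\fint u\,d\sigma\bigr)$; chaining these yields the averaging inequality for $\phi\circ u$. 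Taking $\phi=\exp$ makes $f$ subharmonic, local integrability being automatic because a subharmonic $u$ is locally bounded above, so $f=e^u$ is locally bounded.

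The substance is item~\ref{sum lsh}, and the key is the elementary pointwise identity, valid for $a,b>0$,
\[ a+b=\sup_{0<\theta<1}\frac{a^{\theta}b^{1-\theta}}{\theta^{\theta}(1-\theta)^{1-\theta}}, \]
with the supremum attained at $\theta=a/(a+b)$ (this is just the equality case of Young's inequality, verified by elementary calculus). Writing $c_\theta=\theta^{-\theta}(1-\theta)^{-(1-\theta)}>0$, this exhibits $f+g$ as a supremum over $\theta$ of the functions $c_\theta f^{\theta}g^{1-\theta}$, each \lsh\ by item~\ref{prod lsh}. To verify the averaging inequality for $\log(f+g)$ at a point $x$ with $f(x),g(x)>0$ (the degenerate cases $f(x)=0$ or $g(x)=0$ follow at once from $\log g\le\log(f+g)$, resp.\ $\log f\le\log(f+g)$), I would set $\theta=f(x)/(f(x)+g(x))$. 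On one hand, the identity together with $f(x)^{\theta}\le\exp\bigl(\theta\fint\log f\,d\sigma\bigr)$ and $g(x)^{1-\theta}\le\exp\bigl((1-\theta)\fint\log g\,d\sigma\bigr)$ (immediate from subharmonicity of $u,v$) bounds $f(x)+g(x)$ above by $c_\theta\exp\bigl(\theta\fint\log f\,d\sigma+(1-\theta)\fint\log g\,d\sigma\bigr)$. On the other hand, using the \emph{same} $\theta$ as a non-optimal choice pointwise in $t$ on the sphere gives $\log(f(t)+g(t))\ge\theta\log f(t)+(1-\theta)\log g(t)+\log c_\theta$; integrating over $\partial B(x,r)$ and exponentiating shows $\exp\bigl(\fint\log(f+g)\,d\sigma\bigr)$ dominates exactly the same quantity. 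Comparing the two yields $\log(f(x)+g(x))\le\fint_{\partial B(x,r)}\log(f+g)\,d\sigma$.

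The main obstacle is precisely this step. The pointwise supremum of subharmonic functions need not be subharmonic (one normally must pass to the upper-semicontinuous regularization), so I cannot simply invoke ``the sup of subharmonic functions is subharmonic.'' The two-sided comparison above sidesteps the regularization issue because $\log(f+g)$ is manifestly u.s.c.\ on its own---$f$ and $g$, being $\exp$ of u.s.c.\ functions, are u.s.c., hence so is $f+g$---so the averaging inequality is established directly for this honest function. It then remains only to note $\log(f+g)\in L^1_{\mathrm{loc}}$ and $\not\equiv-\infty$, which follow since $\log f\le\log(f+g)$ controls the negative part while $f+g$, being locally bounded, controls the positive part.
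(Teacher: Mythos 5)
Your proof is correct and is in substance the paper's own argument: the identity $a+b=\sup_{0<\theta<1}\theta^{-\theta}(1-\theta)^{-(1-\theta)}a^{\theta}b^{1-\theta}$ is precisely the explicit tangent-plane representation of the convex, coordinatewise-increasing function $\varphi(x,y)=\log(e^x+e^y)$ that the paper uses in its Lemma (there written abstractly as $\varphi=\sup_{k}(a_kx+b_ky+c_k)$ with $a_k,b_k\ge 0$), and your two-sided comparison at the optimal $\theta$ is the same chain $P_{x,r}\sup_k A_k(F,G)\ge\sup_k P_{x,r}A_k(F,G)\ge\sup_k A_k(F(x),G(x))$, which---as you correctly note---never requires a supremum of subharmonic functions to be subharmonic. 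Your treatments of items (1) and (3) likewise coincide with the paper's (the latter being composition of a subharmonic function with the convex increasing $\exp$).
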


\begin{proof}
Property \ref{prod lsh} is evident. In order to prove \ref{lsh -> sh} (note that non-negativity is built into the definition of \lsh\ functions), we use the fact that if a function $\varphi:\RR\ra\RR$ is increasing and convex and $h$ is a subharmonic function then $\varphi(h)$ is also subharmonic.  We apply this fact with $\varphi(x)=e^x$ and $h=\log f$ when $f$ is \lsh.  To prove \ref{sum lsh}, we need the following lemma.

\begin{lem}\label{lemma 2 var convex}
Let $\varphi: \RR^2\ra \RR$ be a convex function of two variables, increasing in each variable. If $F$ and $G$ are subharmonic functions then $\varphi(F,G)$ is also subharmonic. 
\end{lem}

\begin{proof}
As the function $\varphi$ is convex, the region $R=\{(x,y,z)\in\R^3\,;\,\varphi(x,y)\ge z\}$ is convex; accordingly $R$ may be specified by a collection of tangent planes.  That is, there is a set of affine functionals $A_k(x,y) = a_k x + b_k y + c_k$ (for some constants $a_k,b_k,c_k\in\R$) ranging over some (typically uncountable) index set $k\in K$, such that $(x,y,z)\in R$ if and only if
$z \ge \sup_{k\in K} A_k(x,y)$.  Hence, the function $\varphi$ is determined by
\[ \varphi(x,y)=\sup_{k\in K} A_k(x,y) = \sup_{k\in K}(a_kx+b_ky+c_k). \]
The function $\varphi$ is increasing in each variable, so $a_k,b_k\geq 0$. Now, for $v\in\R^n$ and $r>0$, denote 
\[ P_{x,r}f=\fint_{\partial B(x,r)} f(t)\,\sigma(dt), \]
following the alternative condition for subharmonicity in Remark \ref{remark spherical average}.  To prove the lemma, it therefore suffices to show that $P_{x,r}\varphi(F,G)\geq \varphi(F,G)(x)$ for each $x$. We have
\begin{eqnarray*}
P_{x,r}\varphi(F,G)&=&P_{x,r}\sup_{k\in K} A_k(F,G)\geq
\sup_{k\in K}P_{x,r}(a_kF+b_kG+c_k)\\
 &\geq&  \sup_{k\in K}(a_kF(x)+b_kG(x)+c_k)
=\varphi(F,G)(x).
\end{eqnarray*}  
This proves the Lemma.
\end{proof}

\noindent It is easy to verify that the function $\varphi(x,y)=\log(e^x + e^y)$ satisfies the hypotheses of the lemma: to check its convexity, we write $\log(e^x+e^y)=x+\log(1+e^{x-y})$, yielding the result since the function $t\mapsto \ln(1+e^t)$ is convex.  Hence, if $f$ and $g$ are \lsh, then $f = e^F$ and $g = e^G$ for subharmonic functions $F,G$, and so the lemma yields that $\varphi(F,G) = \log(f+g)$ is subharmonic.  This ends the proof of the proposition.
\end{proof}

The following two corollaries of Proposition \ref{properties} are useful in much of the following.  

\begin{cor} \label{cor average 1}
Let $\Omega$ be a separable metric space, and let $\mu$ a Borel probability measure on $\Omega$.  Suppose $f\colon \R^n\times \Omega\to \R$ satisfies
\begin{enumerate}
\item The function $x\mapsto f(\omega,x)$ is \lsh\ and continuous for $\mu$--almost every $\omega\in\Omega$.
\item The function $\omega\mapsto f(\omega,x)$ is bounded and continuous for each $x\in\R^n$.
\item For small $r>0$, there is a constant $C_r>0$ so that, for all $\omega\in\Omega$ and all $x\in\R^n$, $|f(\omega,t)|\le C_r$ for $t\in B(x,r)$.
\end{enumerate}
Then the function $\tilde{f}(x) = \int_{\Omega} f(\omega,x)\,\mu(d\omega)$ is \lsh.
\end{cor}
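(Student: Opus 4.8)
The plan is to realize $\tilde f$ as a limit of finite positive combinations of the slices $f(\omega,\cdot)$, each of which is \lsh, and then to verify the defining spherical sub-mean-value inequality for $\log\tilde f$ by passing to the limit. First I would record two elementary closure properties that follow from Proposition \ref{properties}: if $g$ is \lsh\ and $c>0$ then $cg$ is \lsh\ (because $\log(cg)=\log c+\log g$ and adding a constant preserves subharmonicity), and, by Proposition \ref{properties}(\ref{sum lsh}) together with induction, any finite sum of \lsh\ functions is \lsh. Consequently every finite combination $\sum_i c_i\, f(\omega_i,\cdot)$ with $c_i>0$ and each $f(\omega_i,\cdot)$ \lsh\ is itself \lsh.

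Next I would set up the approximation. Using hypotheses (1) and (3) with dominated convergence, $\tilde f$ is finite, non-negative and continuous (hence upper semi-continuous and $L^1_{\mathrm{loc}}$). Since $\Omega$ is a separable metric space, for each $N$ I can partition $\Omega$ into countably many Borel blocks $A^N_i$ of diameter $<1/N$ and choose representatives $\omega^N_i\in A^N_i$ lying in the full-measure set on which $x\mapsto f(\omega,x)$ is \lsh\ and continuous (possible since that set meets every block of positive measure). Set
\[ S_N(x)=\sum_i \mu(A^N_i)\,f(\omega^N_i,x). \]
Each $S_N$ is \lsh\ by the first paragraph. For partitions refining with mesh tending to zero, hypothesis (2) gives $f(\omega^N_{i(\omega)},x)\to f(\omega,x)$ for the block containing $\omega$, while hypothesis (3) supplies the uniform domination; dominated convergence then yields $S_N(x)\to\tilde f(x)$ for every $x$.

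Now I would pass to the limit in the sub-mean-value inequality. For every $x$ and $r>0$, subharmonicity of $\log S_N$ gives
\[ \log S_N(x)\le \fint_{\partial B(x,r)}\log S_N(t)\,\sigma(dt). \]
On the left, $\log S_N(x)\to\log\tilde f(x)$ (interpreted as $-\infty$ where $\tilde f(x)=0$, in which case the target inequality is trivial). On the right, hypothesis (3) bounds each $\log S_N$ above by the finite constant $\log C_r$ on $\partial B(x,r)$, so the reverse Fatou lemma applies, and since $\log S_N(t)\to\log\tilde f(t)$ pointwise on the sphere,
\[ \limsup_{N}\fint_{\partial B(x,r)}\log S_N\,\sigma(dt)\le \fint_{\partial B(x,r)}\log\tilde f\,\sigma(dt). \]
Taking $\limsup_N$ in the displayed inequality therefore gives $\log\tilde f(x)\le \fint_{\partial B(x,r)}\log\tilde f\,\sigma(dt)$ for all $x$ and $r$. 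As $\log\tilde f$ is upper semi-continuous and, provided $\tilde f\not\equiv 0$ (the excluded case being trivial), not identically $-\infty$, this spherical sub-mean-value property makes $\log\tilde f$ subharmonic by Remark \ref{remark spherical average}; hence $\tilde f$ is \lsh.

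The main obstacle is precisely this last passage through the logarithm at points where $\tilde f$ may vanish: there $\log$ degenerates to $-\infty$ and a naive interchange of limit and spherical integral is unavailable. The uniform upper bound from hypothesis (3) is exactly what rescues the argument, since it permits reverse Fatou and the only possible blow-up is downward, which is harmless for a sub-mean-value inequality. A secondary technical point is the measure-theoretic construction of the \lsh\ approximants $S_N$ for an arbitrary Borel probability measure on a separable metric space, where the simultaneous use of continuity in $\omega$ and the domination hypothesis is essential.
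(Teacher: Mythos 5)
Your proof is essentially correct, but it reaches the key intermediate object --- a sequence of positive finite combinations of the slices $f(\omega_i,\cdot)$ converging pointwise to $\tilde f$ --- by a different route than the paper, and it handles the logarithm differently at the end. The paper invokes Varadarajan's theorem to produce points $\omega_j$ whose empirical measures $\mu_n=\frac1n\sum_{j\le n}\delta_{\omega_j}$ converge weakly to $\mu$; hypothesis (2) ($f(\cdot,x)\in C_b(\Omega)$) then gives $\tilde f_n(x)\to\tilde f(x)$ immediately, and each $\tilde f_n$ is a genuinely \emph{finite} sum, hence \lsh\ by Proposition \ref{properties}(\ref{sum lsh}). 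Your Riemann-sum construction over a countable partition does the same job, but as written it has a gap: $S_N=\sum_i\mu(A^N_i)f(\omega^N_i,\cdot)$ is in general a \emph{countably infinite} sum, and your first paragraph only establishes closure of \lsh\ under finite sums. You would need to truncate to finitely many blocks (the tail is uniformly small by the bound $C_r$, so the truncations converge uniformly on compacts) and then run your limiting argument once more to see that $S_N$ itself is \lsh\ --- or simply arrange finitely many blocks per $N$ from the start. This is repairable but is not free. At the final stage your argument is actually cleaner than the paper's: you pass to the limit in the spherical sub-mean-value inequality for $\log S_N$ directly via the reverse Fatou lemma, using only the one-sided bound $\log S_N\le\log C_r$ on $\partial B(x,r)$, whereas the paper first regularizes to $\tilde f_n+\e$ so as to get a two-sided bound for dominated convergence, and then sends $\e\downarrow 0$ in a separate step. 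Your observation that only an upper bound is needed, since downward blow-up is harmless for a sub-mean-value inequality, is correct and eliminates the $\e$-regularization. One last small point shared with the paper: to conclude that $\log\tilde f$ is subharmonic in the sense of Definition \ref{logsh} you should also note that $\log\tilde f\in L^1_{\mathrm{loc}}$ (not merely $\tilde f$), which follows from continuity of $\tilde f$ and the sub-mean-value inequality you have just proved on the open set $\{\tilde f>0\}$.
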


\begin{proof} By Varadarajan's theorem (see Theorem 11.4.1 in \cite{Dudley}), there is a sequence of points $\omega_j\in\Omega$ such that the probability measures
\[ \mu_n = \frac{1}{n}\sum_{j=1}^n \delta_{\omega_j} \]
converge weakly to $\mu$: $\mu_n \rightharpoonup \mu$.  Note that
\[ \tilde{f}_n(x) = \int_\Omega f(\omega,x) \mu_n(d\omega) = \frac{1}{n}\sum_{j=1}^n f(\omega_j,x), \]
and by Proposition \ref{properties} part (\ref{sum lsh}), $\tilde{f}_n$ is \lsh\ for each $n$.  Moreover, since $f(\,\cdot\,,x)\in C_b(\Omega)$, weak convergence guarantees that $\tilde{f}_n(x) \to \tilde{f}(x)$ for each $x$.  Fix $\e>0$; then since $\tilde{f_n}$ and $\tilde{f}$ are non-negative, $\tilde{f_n}+\e$ and $\tilde{f}+\e$ are strictly positive and thus $\log(\tilde{f_n}(x)+\e)\to \log(\tilde{f}(x)+\e)$ for each $x$.  Again using Proposition \ref{properties}, $\tilde{f_n}+\e$ is \lsh\, and so $\log(\tilde{f_n}+\e)$ is subharmonic.  Let $r>0$ be small, and consider
\[ \fint_{\partial B(x,r)} \log(\tilde{f}(t)+\e)\,dt = \fint_{\partial B(x,r)} \lim_{n\to\infty} \log(\tilde{f_n}(t)+\e)\,dt. \]
By assumption, $|f(\omega,t)| \le C_r$ for each $\omega\in\Omega$ and $t\in \partial B(x,r)$; hence, $|\tilde{f_n}(t)|\le C_r$ a well.  This means there is a uniform bound on $\log(\tilde{f_n}+\e)$ on $\partial B(x,r)$.  We may therefore apply the dominated convergence theorem to find that
\[ \begin{aligned} \fint_{\partial B(x,r)} \log(\tilde{f}(t)+\e)\,dt
&= \lim_{n\to\infty} \fint_{\partial B(x,r)} \log(\tilde{f_n}(t)+\e)\,dt  \\
&\ge \lim_{n\to\infty} \log(\tilde{f_n}(x)+\e) = \log(\tilde{f}(x)+\e),
\end{aligned} \]
where the inequality follows from the fact that $\log(\tilde{f_n}+\e)$ is subharmonic.  Hence, $\tilde{f}+\e$ is \lsh\ for each $\e>0$.  Finally, since $f(\omega,x)$ is continuous in $x$ for almost every $\omega$, the boundedness of $f$ in $\omega$ shows that $\tilde{f}$ is continuous.  Thus the set where $\tilde{f}>-\infty$ is open.  Therefore $\log(\tilde{f}(x)+\e)$ is uniformly-bounded in $\e$ on small enough balls around $x$, and a simple argument like the one above shows that the limit as $\e\downarrow 0$ can be performed to show that $\tilde{f}$ is \lsh\ as required.  \end{proof}

\begin{rem} It is possible to dispense with the requirement that $f(\omega,x)$ is continuous in $x$ by using Fatou's lemma instead of the dominated convergence theorem; however, the continuity of $f(\omega,x)$ in $\omega$ is still required for this argument.  In all the applications we have planned for Corollary \ref{cor average 1}, $f(\omega,x)$ is such that continuity in one variable implies continuity in the other, and so we need not work harder to eliminate this hypothesis. \end{rem}

\begin{rem} \label{remark average sh} In Corollary \ref{cor average 1}, if \lsh\ is replaced with the weaker condition {\em lower-bounded subharmonic} (in the premise and conclusion of the statement), then the result follows from Definition \ref{logsh} with a simple application of Fubini's theorem; moreover, the only assumption needed is that $f(\,\cdot\,,x)\in L^1(\Omega,\mu)$ for each $x$. 
\end{rem}

\begin{cor} \label{cor average 2}
Suppose $f\colon\R^n\to\R$ is lower-bounded and subharmonic.  Then the function
\[ \tilde{f}(x) =  \int_{O(n)} f(\alpha x)\,d\alpha \]
is subharmonic.  Moreover, if $f$ is also \lsh\ and continuous, then so is $\tilde{f}$.  In either case, $\tilde{f}$ depends only on the radial direction: there is a function $g\colon[0,\infty)\to[-\infty,\infty)$ with $\tilde{f}(x) = g(|x|)$, and $g$ is {\em non-decreasing} on $[0,\infty)$.
\end{cor}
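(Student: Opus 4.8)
The plan is to handle the three assertions of the corollary separately: the subharmonicity of $\tilde f$, the preservation of lsh-ness and continuity, and the radial monotonicity. The first two reduce directly to the averaging results already proved, while the third rests on the classical monotonicity of spherical means. Write $F(\alpha,x)=f(\alpha x)$ for $\alpha\in O(n)$ and $x\in\R^n$, and take $\Omega=O(n)$ with $\mu$ its normalized Haar measure, so that $\tilde f(x)=\int_\Omega F(\alpha,x)\,d\alpha$. The first observation I would record is that for each fixed $\alpha$ the map $x\mapsto f(\alpha x)$ is again subharmonic: the defining spherical-average inequality of Definition \ref{logsh} (equivalently the form in Remark \ref{remark spherical average}) is invariant under the orthogonal substitution $x\mapsto\alpha x$, which carries spheres to spheres and preserves normalized surface measure. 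Since $\log f(\alpha x)=(\log f)(\alpha x)$, the same remark shows $x\mapsto f(\alpha x)$ is lsh whenever $f$ is, and it carries the same lower bound as $f$.

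For subharmonicity of $\tilde f$ in the lower-bounded case, I would simply invoke Remark \ref{remark average sh}, whose only hypothesis is that $\alpha\mapsto f(\alpha x)\in L^1(O(n),d\alpha)$ for each $x$. Pushing Haar measure forward under $\alpha\mapsto\alpha x$ identifies this integral with the spherical average $\fint_{\partial B(0,|x|)} f\,d\sigma$, which is finite for every $|x|>0$ since $f$ is lower-bounded and the spherical means of a subharmonic (hence $L^1_{\mathrm{loc}}$) function are finite; the point $x=0$ is trivial as $\tilde f(0)=f(0)$. When $f$ is additionally lsh and continuous I would instead apply Corollary \ref{cor average 1}, verifying its three hypotheses: (1) is the per-$\alpha$ lsh-ness and continuity noted above; (2) holds because $\alpha\mapsto f(\alpha x)$ is the composition of the continuous map $\alpha\mapsto\alpha x$ with the continuous $f$, and is bounded since $\{\alpha x:\alpha\in O(n)\}=\partial B(0,|x|)$ is compact; and the local uniform bound used in that corollary's proof follows from continuity of $f$ together with compactness of the annulus (or ball) swept out by $\{\alpha t:\alpha\in O(n),\ t\in B(x,r)\}$. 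This simultaneously yields that $\tilde f$ is lsh and continuous.

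Radial symmetry is immediate: for any $R\in O(n)$, the substitution $\beta=\alpha R$ together with invariance of Haar measure gives $\tilde f(Rx)=\int_{O(n)} f(\alpha Rx)\,d\alpha=\int_{O(n)} f(\beta x)\,d\beta=\tilde f(x)$, so $\tilde f$ is constant on spheres about the origin and there is a function $g$ with $\tilde f(x)=g(|x|)$.

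The substantive point, and the step I expect to be the main obstacle, is the monotonicity of $g$. Here I would use that $\tilde f$ is now known to be both subharmonic and radial. Since $\tilde f\equiv g(r)$ on $\partial B(0,r)$, the spherical mean of $\tilde f$ about the origin is exactly $g$, namely $\fint_{\partial B(0,r)}\tilde f\,d\sigma=g(r)$. The classical fact that $r\mapsto\fint_{\partial B(0,r)} u\,d\sigma$ is non-decreasing for subharmonic $u$ then forces $g$ to be non-decreasing on $(0,\infty)$, and the endpoint is handled by $g(0)=\tilde f(0)\le\fint_{\partial B(0,r)}\tilde f\,d\sigma=g(r)$. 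The care required is in justifying this monotonicity for a possibly non-smooth $\tilde f$: the cleanest route is to mollify $\tilde f$ with a radial mollifier, which preserves both radial symmetry and subharmonicity and produces smooth approximants $g_\epsilon\downarrow g$; for these, $\Delta\tilde f\ge0$ reads $(r^{n-1}g_\epsilon')'\ge0$, and since $r^{n-1}g_\epsilon'\to0$ as $r\downarrow0$ this gives $g_\epsilon'\ge0$, whence each $g_\epsilon$ and thus $g$ is non-decreasing. Alternatively one may cite the monotonicity of spherical means directly from potential theory.
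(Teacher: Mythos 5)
Your proof is correct and follows essentially the same route as the paper's (which is only a two-sentence sketch): subharmonicity via Remark \ref{remark average sh}, the \lsh\ and continuity statement via Corollary \ref{cor average 1}, and radial symmetry from Haar invariance. The only divergence is the last step, where the paper simply asserts that a radially symmetric subharmonic function is radially non-decreasing ``by the maximum principle,'' whereas you derive it from the monotonicity of spherical means, justified by radial mollification; both are standard and equally valid, and your version has the minor advantage of not presupposing continuity of $\tilde f$ in the merely subharmonic case.
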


\begin{proof} Suppose $f$ is \lsh\ and continuous. The reader may readily verify that the function $(\alpha,x) \mapsto f(\alpha x)$ satisfies all the conditions of Corollary \ref{cor average 1}.  (The weaker statement for lower-bounded subharmonic $f$, not necessarily continuous, follows similarly via Remark \ref{remark average sh}.)  Clearly averaging $f$ over rotations makes $\tilde{f}$ radially symmetric.  Any radially symmetric subharmonic function is radially non-decreasing, by the maximum principle.
\end{proof}



\section{Hypercontractivity inequalities for the Gaussian measure} \label{sect Hyp for Gaussian}

Let $m$ be   a probability measure  on $ \RR^n$.  For $p\geq 1$, we denote the norm on $L^p(m)$ by $\| \quad  \| _{p,m}$.  We will denote by $L^p_\lsh(m)$ the cone of  log-subharmonic functions in $L^p(m)$.  Let $\g$ be the standard Gaussian measure on ${\RR}^n$, i.e.\ $\g(dx)=c_n \exp (-|x|^2 / 2)\, dx$, where  $dx$ is Lebesgue measure and $c_n= (2\pi)^{-n/2}$. 

Given a function $f$ on ${\RR}^n$, and $r\in [0,1]$, we denote by $f_r$ the function $x\mapsto f(rx)$.  The family of operators $S_rf=f_r, r\in [0,1]$ is a multiplicative semigroup, whose additive form $T_t f(x) = f(e^{-t}x)$ is considered in connection with holomorphic function spaces in \cite{c,g,j,z} and others (including the second author's paper \cite{k} in the non-commutative holomorphic category).  When $f$ is differentiable, the infinitesimal generator $E$ of $(T_t)_{t\geq 0}$ equals $-Ef$
where $E$ is the Euler operator
\[ Ef(x)=x\cdot \nabla f. \]
If $N$ is the Ornstein--Uhlenbeck operator $N= -\Delta + E$ acting in $L^2(\CC^n,\gamma)$ and $f$ is a holomorphic function then
$Nf=Ef$, so $(T_t)_{t\geq 0}$ and, equivalently, $(S_r)_{r\in [0,1]}$ act on holomorphic functions as the Ornstein--Uhlenbeck semigroup $e^{-tN}$ (cf. \cite{a} p.22--23).

\medskip

Before showing the strong hypercontractivity  of the  semigroup $S_r$ for the Gaussian measure and  \lsh\ functions, let us show that the operators $S_r$ are $L^p$--contractions on non-negative subharmonic functions, for any rotationally invariant probability measure. 
\begin{prop}\label{contraction} 
Let  $m$  be a probability measure on  ${\RR}^n$ which is $O(n)$-invariant. Then for $f\ge 0$ subharmonic , $r\in[0,1]$, and $p\ge 1$, we have
\[ \|f_r \|_{p,m}\leq  \|f \|_{p,m}. \]
Moreover, this contraction property holds additionally in the regime $0<p<1$ of $f$ is \lsh.
\end{prop}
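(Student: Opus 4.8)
The plan is to reduce the statement to a monotonicity property of the rotational average of a subharmonic function, which is already packaged in Corollary \ref{cor average 2}. We may assume $\|f\|_{p,m}<\infty$, since otherwise there is nothing to prove. Set $g=f^p$. In the regime $p\ge 1$ with $f\ge 0$ subharmonic, the composition $g=\varphi\circ f$ with $\varphi(t)=t^p$ (increasing and convex on $[0,\infty)$, and $f$ takes values there) is subharmonic by the composition fact recalled in the proof of part (\ref{lsh -> sh}) of Proposition \ref{properties}. In the regime $0<p<1$ with $f$ \lsh, part (\ref{prod lsh}) of Proposition \ref{properties} gives that $g=f^p$ is \lsh, and then part (\ref{lsh -> sh}) gives that $g$ is subharmonic. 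In both cases $g$ is a nonnegative (hence lower-bounded) subharmonic function, and since $\|f_r\|_{p,m}^p=\int g(rx)\,dm(x)$ and $\|f\|_{p,m}^p=\int g\,dm$, it suffices to prove
\[ \int_{\RR^n} g(rx)\,dm(x) \le \int_{\RR^n} g(x)\,dm(x). \]

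Next I would symmetrize using the $O(n)$-invariance of $m$. For each fixed $\alpha\in O(n)$, the substitution $x\mapsto \alpha x$ preserves $m$, so $\int g(rx)\,dm(x)=\int g(\alpha(rx))\,dm(x)$; averaging this identity over $\alpha\in O(n)$ against the normalized Haar measure and applying Tonelli's theorem (legitimate since $g\ge 0$) yields
\[ \int_{\RR^n} g(rx)\,dm(x) = \int_{\RR^n} \tilde g(rx)\,dm(x), \qquad \tilde g(x) := \fint_{O(n)} g(\alpha x)\,d\alpha. \]
The same computation with $r=1$ gives $\int g\,dm=\int \tilde g\,dm$.

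Now I invoke Corollary \ref{cor average 2} applied to the lower-bounded subharmonic function $g$: the average $\tilde g$ is radial, $\tilde g(x)=\Phi(|x|)$ for some function $\Phi$ that is non-decreasing on $[0,\infty)$. Since $r\in[0,1]$ we have $r|x|\le|x|$, and monotonicity of $\Phi$ gives the pointwise bound $\tilde g(rx)=\Phi(r|x|)\le\Phi(|x|)=\tilde g(x)$ for every $x$. Integrating against $m$ and using the two identities above,
\[ \int_{\RR^n} g(rx)\,dm(x) = \int_{\RR^n} \tilde g(rx)\,dm(x) \le \int_{\RR^n} \tilde g(x)\,dm(x) = \int_{\RR^n} g(x)\,dm(x), \]
which is the desired inequality; taking $p$-th roots completes the proof.

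The only genuinely nontrivial ingredient is the monotonicity of the radial profile $\Phi$, i.e.\ that the spherical mean of a subharmonic function about the origin increases with the radius; but this is exactly the content of Corollary \ref{cor average 2}, so the remaining work is the routine bookkeeping of the two regimes for $g=f^p$ and the Fubini--Tonelli symmetrization. The one point requiring a little care is the interchange of the $O(n)$-average with the $m$-integral, which is justified by the nonnegativity of $g$ together with the reduction to the case $\int g\,dm<\infty$.
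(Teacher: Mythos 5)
Your proof is correct and follows essentially the same route as the paper's: reduce to $g=f^p$ (subharmonic in both regimes), symmetrize over $O(n)$ using the invariance of $m$ and Fubini--Tonelli, and conclude from the radial monotonicity of the rotational average supplied by Corollary \ref{cor average 2}. The only differences are cosmetic (you make the convex-composition step for $p\ge 1$ and the nonnegativity justification for Tonelli explicit).
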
 
\begin{proof}
First consider the case $p\ge 1$, and assume only that $f\ge 0$ is subharmonic.  Note that, since $f\ge 0$ and since $m$ is $O(n)$-invariant,
\[ \|f_r\|_{p,m}^p = \int_{\R^n} f(rx)^p\,dm(x) = \int_{O(n)} \int_{\R^n} f(rx)^p\,dm(\alpha x)\,d\alpha. \]
Changing variables using the linear transformation $\alpha$ in the inside integral and using Fubini's theorem, we have (replacing $\alpha^{-1}$ with $\alpha$ in the end)
\[ \int_{\R^n} \int_{O(n)} f(r\alpha x)^p\,d\alpha\,dm(x) = \int_{\R^n} S_r\,h(x)\,dm(x), \]
where $h(x) = \int_{O(n)} f(\alpha x)^p\,d\alpha$; i.e., with $k = f^p$, $h = \tilde{k}$ in the notation of Corollary \ref{cor average 2}.  Since $p\ge 1$, $k$ is subharmonic, and so by Corollary \ref{cor average 2} $h$ is also subharmonic and radially increasing.  In particular, there is some non-decreasing $g\colon[0,\infty)\to\R$ such that $h(x) = g(|x|)$.  So $S_r\,h(x) = g(r|x|) \le g(|x|) = h(x)$ for $r\in[0,1]$.  Integrating over $\R^n$ we have $\|f_r\|_{p,m}^p \le \int h(x)\,dx$ which equals $\|f\|_{p,m}^p$ by reversing the above argument.  This proves the result.

\medskip

If $0<p<1$, the above argument follows through as well since, if $f\in\lsh$ then $k=f^p$ is \lsh\ by Proposition \ref{properties}.  In particular, $k$ is non-negative and subharmonic, and so by Corollary \ref{cor average 2}, so is $\tilde{k}$.  The rest of the proof follows verbatim.

\end{proof}



We now show the strong hypercontractivity inequality for Gaussian measure and \lsh\ functions.  That is: $\|T_t f\|_{q,\gamma} \le \|f\|_{p,\gamma}$ whenever $f$ is \lsh\ and $t\ge t_J(p,q)$.  This is a generalization (from holomorphic functions to the much larger class of logarithmically-subharmonic functions) of Janson's original strong hypercontractivity theorem in \cite{j}.  Because our test functions $f$ are non-negative and the action of $T_t$ commutes with taking powers of $f$, this can be reduced to the following simplified form.

\begin{thm}\label{hcs}
Let $f$ be a  log--subharmonic function.  Then for every $r\in [0,1]$, one has
\begin{equation}\label{hc}
\|f_r  \| _{1/r^2,\g} \leq   \|f\|_{1,\g}. \end{equation}
\end{thm}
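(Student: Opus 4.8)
The plan is to prove the strong hypercontractivity estimate $\|f_r\|_{1/r^2,\gamma} \le \|f\|_{1,\gamma}$ by first reducing to the radial case and then verifying a one-dimensional inequality through direct computation. The key structural observation is that the Gaussian measure $\gamma$ is $O(n)$-invariant, so I may replace $f$ by its rotational average $\tilde f(x) = \int_{O(n)} f(\alpha x)\,d\alpha$. By Corollary \ref{cor average 2}, if $f$ is \lsh\ and continuous then $\tilde f$ is again \lsh, radial, and radially non-decreasing, so $\tilde f(x) = g(|x|)$ for some non-decreasing $g$. Because both norms are taken against the rotation-invariant measure $\gamma$ and the dilation $S_r$ commutes with the averaging, the inequality for $f$ follows from the same inequality for $\tilde f$, provided averaging does not increase the relevant quantities; here Jensen's inequality handles the $L^{1/r^2}$ side (a convex power when $1/r^2 \ge 1$) while the $L^1$ norm is preserved exactly by Fubini.

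\medskip

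Having reduced to a radial, non-decreasing \lsh\ profile, the next step is to exploit subharmonicity quantitatively. The plan is to integrate the inequality $\fint_{\partial B(x,r)}\log\tilde f \ge \log\tilde f(x)$ against Gaussian weight, or equivalently to use that the radial profile $g$ of a \lsh\ radial function satisfies a differential/monotonicity constraint coming from the subharmonicity of $\log g(|x|)$. Concretely, I would write both sides of \eqref{hc} as one-dimensional integrals over $[0,\infty)$ against the radial Gaussian density $r^{n-1}e^{-r^2/2}\,dr$, change variables $x\mapsto rx$ on the left, and attempt to prove the resulting scalar inequality. The cleanest route is to reduce further to the exponential extremizers: since the cone of \lsh\ functions is generated (in the sense relevant to such log-convexity bounds) by functions of the form $x \mapsto e^{\langle a,x\rangle}$ — which are \lsh\ by Example (3) — it suffices to verify \eqref{hc} for $f(x)=e^{\langle a,x\rangle}$, where the Gaussian integrals are explicit and the inequality becomes an elementary statement about the quadratic $|a|^2$.

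\medskip

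For the exponential test functions the computation is transparent: $\|f\|_{1,\gamma} = \int e^{\langle a,x\rangle}\,d\gamma(x) = e^{|a|^2/2}$, while $\|f_r\|_{1/r^2,\gamma}^{1/r^2} = \int e^{\langle a,x\rangle/r}\,d\gamma = e^{|a|^2/(2r^2)}$, so $\|f_r\|_{1/r^2,\gamma} = e^{|a|^2/2} = \|f\|_{1,\gamma}$, with \emph{equality}. This exhibits the exponentials as extremizers and confirms the exponent $1/r^2$ is sharp. The substantive task is then to pass from this sharp equality on generators to the full inequality on all \lsh\ $f$; I expect the precise justification of that passage — controlling a general radial non-decreasing \lsh\ profile by exponential profiles, with the correct direction of inequality surviving the integration against the Gaussian — to be the main obstacle.

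\medskip

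I anticipate the hardest point is not the extremal computation but the reduction machinery: making rigorous that averaging over $O(n)$ and the Jensen step combine to dominate the general case, together with the approximation needed to handle \lsh\ functions that are merely upper semi-continuous rather than continuous (where Corollary \ref{cor average 2} was stated for continuous \lsh\ functions). The likely remedy is to truncate and mollify, prove the inequality for continuous \lsh\ functions, and then recover the general case by monotone approximation, using that \lsh\ functions are limits of decreasing sequences of smooth \lsh\ functions and that both $L^p$ norms behave well under such limits via monotone or dominated convergence.
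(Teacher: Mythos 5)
Your proposal does not close the argument: the two reductions you lean on both fail, and the step you yourself flag as ``the substantive task'' is precisely the theorem. First, the reduction to radial functions goes the wrong way. Writing $\|f_r\|_{q,\g}^q = \int\!\!\int f(r\alpha x)^q\,d\alpha\,d\g(x)$ and applying Jensen to the inner average gives $\int f(r\alpha x)^q\,d\alpha \ge \bigl(\int f(r\alpha x)\,d\alpha\bigr)^q = \tilde f(rx)^q$, i.e.\ $\|f_r\|_{q,\g} \ge \|\tilde f_r\|_{q,\g}$. So proving the inequality for the rotational average $\tilde f$ gives an upper bound on the \emph{smaller} quantity and says nothing about $\|f_r\|_{q,\g}$. (This averaging trick does work in Proposition \ref{contraction}, but only because there one averages $f^p$ and compares $L^p$ to $L^p$; it does not survive the jump from $L^1$ to $L^{1/r^2}$.) Second, the cone of \lsh\ functions is \emph{not} generated by the exponentials $e^{\langle a,x\rangle}$ in any sense that would let the inequality transfer. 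Positive combinations $\sum_i c_i e^{\langle a_i,x\rangle}$ do satisfy (\ref{hc}) by Minkowski's inequality, since equality holds for each summand, but such functions are real-analytic Laplace transforms of positive measures; already on $\R$ the \lsh\ function $\max(1,e^x)$ (whose logarithm is $\max(0,x)$, convex) is not of this form, and on $\R^{2n}\cong\CC^n$ the functions $|h|$ with $h$ holomorphic --- the motivating examples --- are far outside this class. Your exponential computation establishes sharpness of the exponent $1/r^2$ (this is essentially Proposition \ref{SharpGauss}), not the inequality itself.

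The paper's actual proof supplies the missing idea from a different direction: it imports Nelson's classical hypercontractivity for the Ornstein--Uhlenbeck semigroup $P_t$. The Mehler kernel, rewritten against Lebesgue measure, is a Gaussian centered at $rx$ and hence constant on spheres about $rx$, so the sub-mean-value property gives $P_t f(x)\ge f(e^{-t}x)$ pointwise for any non-negative subharmonic $f$. Nelson's bound $\|P_t f\|_{q(t),\g}\le\|f\|_{p,\g}$ with $q(t)=(p-1)e^{2t}+1$ then dominates $\|f_{e^{-t}}\|_{q(t),\g}$, and applying this to $f^{1/p}$ and letting $p\to\infty$ (Fatou) pushes the exponent $q(t)/p$ up to $e^{2t}=1/r^2$. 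If you want a self-contained route that avoids Nelson, the paper's hyperboundedness proposition (via the $L^\infty$ bound of Lemma \ref{infinity}) gets the right exponent with a dimension-dependent constant $C^n>1$, and one would then need a tensorization argument to remove it; but some genuine input beyond the extremal computation is unavoidable.
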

\begin{rem}
The inequality (\ref{hc}) means that the operators $S_r$ act as contractions between the spaces
\[  S_r:L^1_{ \lsh}(\g)\ra L^{1/r^2}_{  \lsh}(\g), \]
or, equivalently, the operator $T_t$ is a contraction between the  cones
\[  T_t:L^1_{  \lsh}(\g)\ra L^{ e^{2t}}_{  \lsh}(\g). \]
In fact, by Proposition \ref{properties}, one gets other hypercontractivity properties.  Applying the theorem to the function $f^p$, it follows that the operators $S_r$ are contractions
\[ S_r:L^p_{  \lsh}(\g)\ra L^{p/r^2}_{  \lsh}(\g), \]
and the operators $T_t$ are contractions
\[ T_t:L^p_{  \lsh}(\g)\ra L^{ e^{2t}p}_{  \lsh}(\g) \]
for any $p>0$.  Since $T_t$ is an $L^q$ contraction for any $q$ (Proposition \ref{contraction}), by the semigroup property the above implies that $T_t$ is a contraction from $L^p$ to $L^q$ for any $q\ge e^{2t}p$.  In other words, $T_t$ is a contraction from $L^p$ to $L^q$ provided that $t \ge \frac{1}{2}\log(q/p)$, the Janson time $t_J(p,q)$.  This is the strong hypercontractivity theorem proved in \cite{j} for holomorphic functions on $\CC^n\cong\RR^{2n}$; here we prove it for \lsh\ functions on $\RR^n$.
\end{rem}

\begin{proof}  
The case where $f=\log|g|$ with $g$ holomorphic on ${\CC^n}$ is implicitly proved in \cite{j} but is not given in this form. Using the ideas of Janson,  we will prove the general theorem.  Nelson's classical hypercontractivity result plays a crucial role here as in Janson's paper.  Let $P_t = e^{-tN}$ be the Ornstein--Uhlenbeck semigroup. Let us write it in the form 
\begin{equation} \label{eq OU kernel} P_tf(x)=\int M_r(x,y)f(y)\,\g(dy) \end{equation}
where $r=e^{-t}$ and $M_r$ is the {\em Mehler kernel}
\begin{equation} \label{eq Mehler}
M_r(x,y) = (1-r^2)^{-n/2}\,\exp\left(-\frac{r^2}{1-r^2}|x|^2 + \frac{2r}{1-r^2}\langle x,y\rangle -\frac{1+r^2}{1-r^2}|y|^2 \right).
\end{equation}
We can rewrite Equation \ref{eq OU kernel} in terms of Lebesgue measure as $P_tf(x) = \int K_r(x,y)f(y)\,dy$ where the modified kernel $K_r$ is given by
\[ K_r(x,y) = (1-r^2)^{-n/2}\,\exp\left(-\frac{|y-rx|^2}{1-r^2}\right). \]
Evidently $K_r(x,y)$ is constant in $y$ on spheres around $rx$. This implies that if $f\geq 0$ is subharmonic, then for all $t>0$ we have $P_t f(x)\geq f(e^{-t}x)$ (indeed, this is at the core of Janson's proof in \cite{j}). The classical hypercontractivity inequality of Nelson (cf.\ \cite{n}) is given by:  
\[  \| P_t f \|_{q(t), \g}\leq  \| f \|_{p,\g} \]
where $q(t)= (p-1)e^{2t}+1$ and $p>1$. Hence, for $f\ge 0$ subharmonic, we have Nelson's theorem for the dilation semigroup:
\begin{equation} \label{eq Nelson dilation} \|f(e^{-t}x) \|_{q(t),\g}\leq  \| f \|_{p,\g}. \end{equation}
Now take $f$ to be $\lsh$.  The function $f^{1/p}$ is also $\lsh$, so it is positive and subharmonic. Equation \ref{eq Nelson dilation} applied to $f^{1/p}$ becomes 
\[ \left(\int  f_{e^{-t}}(x)^{q(t)/p}d\g(x)\right)^{1/q(t)} \leq  \left(\int f(x) d\g(x))\right)^{1/p}. \]
This implies that
\[ \|f_{e^{-t}}\|_{q(t)/p ,\g} \leq  \|f\|_{1,\g}. \]
Observe that $\lim_{p\to \infty} \frac{q(t)}{p}=e^{2t}=\frac{1 }{r^2}$ where $r=e^{-t}$.   Applying Fatou's lemma, 
we obtain $\|f_r \|_{r^{-2},\g}\leq  \| f\|_{1,\g}$, the desired result.
\end{proof}

In the full hypercontractivity theory due to Nelson \cite{n}, $t_N(p,q)=\frac{1}{2}\log\frac{q-1}{p-1}$ is the smallest time to contraction, for all $L^p$-functions.  The analogous statement holds for Theorem \ref{hcs}; the exponent $1/r^2$ is optimal in this inequality (with Gaussian measure) over all \lsh\ functions.  In fact, it is optimal when restricted just to holomorphic functions on $\CC^n$, as is proved (in an analogous non-commutative setting) in \cite{k}; here we present a slightly different proof.

\begin{prop}\label{SharpGauss}
Let $r\in (0,1]$ and $C>0$. Assume that for some $p>0$, the following inequality holds for every \lsh\ function  $f$:
\begin{equation}\label{hcp}
 \|f_r\|_{p,\g} \leq  C\|f\|_{1,\g}.
\end{equation} 
Then $p\le 1/r^2$ and $C\geq 1$.
\end{prop}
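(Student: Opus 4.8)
The plan is to test the hypothesized inequality \eqref{hcp} against the explicit family of \lsh\ functions furnished by item (3) of the Example in Section 2, namely the exponentials $f_a(x) = \exp\langle a,x\rangle$ for $a\in\RR^n$. These are \lsh\ (indeed $\log f_a = \langle a,\,\cdot\,\rangle$ is affine, hence harmonic), and their Gaussian $L^p$-norms are available in closed form, so substituting them into \eqref{hcp} should produce a scalar inequality relating $p$, $r$, and $C$ that can then be probed by varying $a$.

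First I would record the elementary Gaussian moment computation: for any $b\in\RR^n$,
\[ \int_{\RR^n} e^{\langle b,x\rangle}\,\g(dx) = e^{|b|^2/2}. \]
Applying this with $b=a$ gives $\|f_a\|_{1,\g} = e^{|a|^2/2}$. Since $(f_a)_r(x) = f_a(rx) = e^{r\langle a,x\rangle}$, raising to the $p$-th power and applying the same formula with $b=pra$ yields
\[ \|(f_a)_r\|_{p,\g}^p = \int_{\RR^n} e^{pr\langle a,x\rangle}\,\g(dx) = e^{p^2r^2|a|^2/2}, \]
so that $\|(f_a)_r\|_{p,\g} = e^{pr^2|a|^2/2}$. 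In particular both norms are finite, so that \eqref{hcp} is a non-vacuous statement for these functions.

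Substituting these two norms into \eqref{hcp} turns the hypothesis into the scalar inequality
\[ e^{pr^2|a|^2/2} \le C\,e^{|a|^2/2}, \quad\text{i.e.}\quad e^{(pr^2-1)|a|^2/2}\le C, \]
required to hold for every $a\in\RR^n$. Taking $a=0$ (equivalently testing the constant function $f\equiv 1$, which is trivially \lsh) gives $1\le C$, which is the second claim. For the first claim I would let $|a|\to\infty$: if $pr^2-1>0$ the left-hand side tends to $+\infty$, violating the uniform bound by $C$, so necessarily $pr^2\le 1$, that is $p\le 1/r^2$ (finite since $r\in(0,1]$).

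There is no serious obstacle in this argument; the only step that requires genuine thought is the initial one, namely identifying a family of \lsh\ test functions that simultaneously has closed-form Gaussian $L^p$-norms and saturates the inequality in the limit. The exponentials $f_a$ supply exactly such a family, and the affine growth of $\langle a,x\rangle$ is precisely matched against the Gaussian weight so that the critical exponent $1/r^2$ emerges as $|a|\to\infty$. Everything else is the routine moment computation recorded above.
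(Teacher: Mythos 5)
Your proof is correct and follows essentially the same route as the paper: the paper also tests the inequality on the exponential family $f^a(x)=e^{ax_1}$, computes $\|(f^a)_r\|_{p,\g}=\exp(r^2a^2p/2)$ and $\|f^a\|_{1,\g}=\exp(a^2/2)$, and deduces $C\ge 1$ by letting $a\to 0$ and $p\le 1/r^2$ by letting $a\to\infty$. Your only (immaterial) variation is using $\exp\langle a,x\rangle$ for general $a\in\RR^n$ rather than a single coordinate direction.
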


\begin{rem} 
If $m$ is a probability measure then the $L^p$ norm $\|f\|_{p,m}$ is a non-decreasing function of $p$.  It follows that if Equation (\ref{hcp}) holds for a $p>1$ then it also holds for every $q\in[1,p)$.
\end{rem}

\begin{proof}
Consider the set of functions $f^a(x)=  e^{ax_1}$, which are all \lsh\ for $a>0$.  An  easy computation shows that 
$\|(f^a)_r\|_{p,\g} = \exp (r^2a ^2p/2)$; in particular, $\|(f^a)\|_{1,\g} = \exp(a^2/2)$.  The supposed inequality (\ref{hcp}) then implies that $\exp (r^2a ^2p/2)\leq C\exp (a ^2/2)$ for all $a>0$. Set $s= r^2p$.  Then $\exp(a^2(s-1)/2)\leq C$ 
for every real $a$.   Letting $a\to 0$ shows that $C\ge 1$; letting $a\to\infty$ shows that $s\le 1$.
\end{proof}
 
\begin{rem} \label{rem C>1} Hypercontractive inequalities very typically involve actual contractions (i.e.\ constant $C=1$ in Proposition \ref{SharpGauss}), since the time constant ($t_N$ or $t_J$ in this case) are usually independent of dimension, yielding an infinite-dimensional version of the inequality.  Indeed, in Nelson's original work \cite{n}, one main technique was to show that hypercontractivity held in all dimensions up to a fixed (dimension-independent) constant $C>1$.  The infinite-dimensional version then implies that $C=1$ is the best inequality, for if the best constant is $>1$ or $<1$, a tensor argument shows that in infinite dimensions the constant is $\infty$ or $0$, respectively.
\end{rem}

In the following, we will proceed along the lines of Remark \ref{rem C>1} and give a different proof of Theorem \ref{hcs}, with a non-optimal constant, that avoids direct use of Nelson's result, but produces a dimension-{\em dependent} constant.  First we need the following $L^{\infty}$  inequality.

\begin{lem}\label{infinity}  
Let $f$ be an \lsh\ function. Then for all $x\in\RR^n$,
\[ \exp (- \| x \|^2 /2) f(x)\leq \|f\|_{1,\g}. \]
\end{lem}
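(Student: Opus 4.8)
The plan is to prove the pointwise bound $e^{-|x|^2/2}f(x) \le \|f\|_{1,\gamma}$ by exploiting the sub-mean-value property of the subharmonic function $\log f$ together with the Gaussian weight. First I would reduce to the one-point estimate: fix $x \in \RR^n$ and bound $f(x)$ from above by an average of $f$ against a spherical (or Gaussian) measure, since subharmonicity gives sub-averaging inequalities in exactly this direction. The natural object to compare with is the integral $\|f\|_{1,\gamma} = c_n \int_{\RR^n} f(y)\,e^{-|y|^2/2}\,dy$, so the goal is to dominate $f(x)$ by a suitable Gaussian average of $f$ centered appropriately.

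The cleanest route uses the log-subharmonicity directly. Write $u = \log f$, which is subharmonic. By the spherical sub-mean-value inequality in Remark \ref{remark spherical average}, $u(x) \le \fint_{\partial B(x,r)} u\,d\sigma$ for every $r>0$, and since $\varphi(t)=e^t$ is increasing and convex, applying it and using Jensen (or simply the subharmonicity of $f$ itself from Proposition \ref{properties}(\ref{lsh -> sh})) gives $f(x) \le \fint_{\partial B(x,r)} f\,d\sigma$. Integrating this spherical bound against a well-chosen radial weight in $r$ converts the family of spherical averages into a single solid Gaussian integral. Concretely, I would multiply the inequality $f(x) \le \fint_{\partial B(x,r)} f\,d\sigma$ by the radial density of a Gaussian centered at $x$ and integrate over $r \in (0,\infty)$; this reconstructs $\int_{\RR^n} f(y)\,G_x(y)\,dy$ where $G_x$ is a normalized Gaussian centered at $x$. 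Thus $f(x) \le \int_{\RR^n} f(y)\,G_x(y)\,dy$ for a unit-mass Gaussian $G_x$ about $x$.

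The key computation is then matching the two Gaussian densities. Taking $G_x$ to be the standard (variance-one) Gaussian centered at $x$, its density is $c_n\,e^{-|y-x|^2/2}$, and expanding $|y-x|^2 = |y|^2 - 2\langle x,y\rangle + |x|^2$ gives
\begin{equation}\label{eq plan expand}
c_n\,e^{-|y-x|^2/2} = e^{-|x|^2/2}\,e^{\langle x,y\rangle}\,c_n\,e^{-|y|^2/2}.
\end{equation}
Hence $f(x) \le e^{-|x|^2/2}\int_{\RR^n} f(y)\,e^{\langle x,y\rangle}\,\gamma(dy)$, which is almost the right shape but carries an extra factor $e^{\langle x,y\rangle}$ inside the integral. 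To remove it I would absorb the exponential into $f$: since $y \mapsto e^{\langle x,y\rangle}$ is \lsh\ (it equals $\exp\langle x,y\rangle$, an example above), Proposition \ref{properties}(\ref{prod lsh}) shows $y \mapsto f(y)e^{\langle x,y\rangle}$ is again \lsh, but the cleaner move is to apply the established sub-mean-value bound with the \emph{centered} Gaussian and track constants so that the $e^{\langle x,y\rangle}$ term is exactly compensated. Rearranging \eqref{eq plan expand} the other way, note that the map $f(x) = e^{|x|^2/2}\,[\,e^{-|x|^2/2}f(x)\,]$ suggests studying $h(x) = e^{-|x|^2/2}f(x)$ directly; but $h$ is not \lsh, so I instead keep the inequality in the form $e^{-|x|^2/2}f(x) \le \int f(y)\,e^{\langle x,y\rangle}\,\gamma(dy)$ and recognize that the right side is minimized/controlled by choosing the comparison Gaussian centered so that the linear term vanishes.

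The main obstacle is precisely this bookkeeping: naively centering the averaging Gaussian at $x$ produces the spurious factor $e^{\langle x,y\rangle}$, so the correct approach is to center it at the origin while still dominating $f(x)$. The resolution is to use that $f$ subharmonic implies $f(x) \le \int f(y)\,K(x,y)\,dy$ for any radially-symmetric-about-$x$ probability kernel $K(x,\cdot)$, and then to observe that the specific kernel $c_n e^{-|y-x|^2/2}$ is radial about $x$, so the sub-mean-value inequality applies and yields exactly $f(x) \le e^{-|x|^2/2}\int f(y)\,e^{\langle x,y\rangle}\,\gamma(dy)$. The final step is then to bound $\int f(y)e^{\langle x,y\rangle}\gamma(dy)$ by $\|f\|_{1,\gamma}$ up to the Gaussian normalization; the honest way is to recognize that this is not bounded by $\|f\|_{1,\gamma}$ in general, which forces a different centering. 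Therefore I expect the correct and delicate step to be a completion-of-the-square that centers the averaging Gaussian at the origin: one shows directly that $f(x)\,e^{-|x|^2/2} \le c_n\int f(y)\,e^{-|y|^2/2}\,dy$ by comparing $f(x)$ against the solid Gaussian average of $f$ about the origin, using that the maximum of the radial Gaussian weight relative to the translate occurs with the stated exponential factor. Carefully unwinding this completion of the square — ensuring the cross term cancels and the residual constant is exactly $e^{-|x|^2/2}$ — is the technical crux, and it is here that the log-subharmonicity (rather than mere subharmonicity) may be needed to justify exchanging the order of the radial integration and the sub-mean-value inequality.
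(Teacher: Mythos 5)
Your proposal correctly sets up the sub-mean-value inequality against a Gaussian kernel centered at $x$ and correctly diagnoses the resulting obstacle: completing the square in $c_n e^{-|y-x|^2/2}$ leaves the factor $e^{\langle x,y\rangle}$ inside the integral, and $\int f(y)e^{\langle x,y\rangle}\,\gamma(dy)$ is \emph{not} controlled by $\|f\|_{1,\gamma}$. But the resolution you then propose --- ``center the averaging Gaussian at the origin and complete the square so the cross term cancels'' --- cannot work as stated, because the sub-mean-value property only bounds a subharmonic function at the \emph{center} of the averaging kernel. There is no direct way to dominate $f(x)$ by a Gaussian average of $f$ about the origin; that is precisely the inequality you are trying to prove, not a tool you can invoke. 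So the final paragraph is circular, and the proof has a genuine gap at its crux.

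The missing idea is the exponential tilt, which you actually mention in passing and then abandon. The paper's argument is: from Proposition \ref{contraction} with $r=0$ one has $g(0)\le\int g\,d\gamma$ for any non-negative subharmonic $g$. Apply this not to $y\mapsto f(x+y)$ but to the tilted function $f_x(y)=f(x+y)e^{\langle a,y\rangle}$, which is a product of two \lsh\ functions and hence \lsh\ by Proposition \ref{properties}(\ref{prod lsh}) --- this is exactly where log-subharmonicity (rather than mere subharmonicity) is essential, since multiplying a merely subharmonic function by $e^{\langle a,y\rangle}$ need not preserve subharmonicity. This gives $f(x)=f_x(0)\le\int f(x+y)e^{\langle a,y\rangle}\,\gamma(dy)$, and after the substitution $v=x+y$ the right-hand side equals
\begin{equation*}
e^{-\langle a,x\rangle}e^{-|x|^2/2}\int f(v)\,e^{-|v|^2/2}\,e^{\langle a+x,v\rangle}\,c_n\,dv .
\end{equation*}
Choosing $a=-x$ kills the cross term $e^{\langle a+x,v\rangle}$ identically and turns the integral into exactly $\|f\|_{1,\gamma}$, with prefactor $e^{|x|^2}e^{-|x|^2/2}=e^{|x|^2/2}$, yielding the claim. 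You had all the ingredients on the table (translation-invariance of \lsh, the example $e^{\langle a,x\rangle}$, closure under products), but the free parameter $a$ and its optimization $a=-x$ is the step your proposal never supplies.
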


\begin{rem}
This inequality is sharp: take $f\equiv 1$. 
\end{rem}

\begin{proof}
From the proof of Proposition \ref{contraction} with $r=0$ and $m=\g$, it follows that for every non-negative subharmonic function $g$, the inequality $g(0)\leq \int g(x)\,d\g(x)$ holds.  Now take an \lsh\ function $f$  and $a,x\in{\RR}^n$.  It is easy to check that the translated function $y\mapsto f(x+y)$ is also \lsh. Then the function $f_x$ given by
\[ f_x(y)= f(x+y)e^{\langle a,y \rangle} \]
is a product of two \lsh\ functions, and so is \lsh\ by Proposition \ref{properties}.  In particular, it is non-negative and subharmonic.   Applying the  last inequality to $f_x$, we get 
\begin{equation} \label{eq Linfty 1} f(x)= f_x (0)\leq \int f(x+y)  e^{\langle a,y\rangle}d\g(y). \end{equation}
Make the change of variables $v=x+y$.  Then the right-hand-side of Equation \ref{eq Linfty 1} becomes 
\[ \int f(v)   e^{\langle a,v-x\rangle} e^{-|v-x|^2/2}\,(2\pi)^{-n/2}\,dv \]
and is equal to
\[ e^{-\langle a,x \rangle }e^{ - \| x \|^2/2} \int f(v)  e^{- \|v \| ^2/2}  e^{\langle a,v \rangle} e^{\langle x,v\rangle }  (2\pi)^{-n/2}\,dv. \]
The conclusion follows by taking $a=-x$. 
\end{proof}

\begin{prop} (Hyperboundedness) For  the constant  $C=e^{1/2e}>1$, and for every $r\in [0,1]$, the following inequality  is true for any \lsh\ function $f$  on ${\RR}^n$:
\[ \| f_r \| _{1/r^2,\g} \le C^n\, \| f  \| _{1,\g}. \]
\end{prop}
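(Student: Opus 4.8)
\emph{Overview.} The plan is to derive this dimension--dependent hyperboundedness directly from the pointwise $L^\infty$ estimate of Lemma~\ref{infinity}, bypassing Nelson's theorem entirely. Write $q = 1/r^2$ and note that for $r\in(0,1]$ we have $q\ge 1$. The goal is to bound the single integral $\int_{\RR^n} f(rx)^q\,d\g(x)$ directly; once this is controlled, taking the $q$-th root produces the desired $L^q$ norm.

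\emph{The key factorization.} I would split off one factor and apply the $L^\infty$ bound to the rest. Writing
\[ f(rx)^q = f(rx)\cdot f(rx)^{q-1}, \]
I estimate the second factor using Lemma~\ref{infinity}, which gives $f(rx)\le e^{r^2|x|^2/2}\,\|f\|_{1,\g}$. Since $r\le 1$ forces $q-1\ge 0$, raising this to the power $q-1$ preserves the inequality, and the crucial algebraic identity $r^2(q-1) = r^2(1/r^2-1) = 1-r^2$ yields
\[ f(rx)^{q-1} \le e^{(1-r^2)|x|^2/2}\,\|f\|_{1,\g}^{\,q-1}. \]
Substituting this back and writing $d\g(x) = (2\pi)^{-n/2}e^{-|x|^2/2}\,dx$, the Gaussian weight combines with the exponential factor to leave precisely $e^{-r^2|x|^2/2}$. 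A change of variables $u = rx$ (Jacobian $r^{-n}$) then collapses the remaining integral exactly into $\|f\|_{1,\g}$, producing
\[ \int_{\RR^n} f(rx)^q\,d\g(x) \le r^{-n}\,\|f\|_{1,\g}^{\,q}. \]
Taking $q$-th roots gives $\|f_r\|_{q,\g} \le r^{-n/q}\,\|f\|_{1,\g} = r^{-nr^2}\,\|f\|_{1,\g}$.

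\emph{The optimization.} It remains to bound $r^{-nr^2} = (r^{-r^2})^n$ uniformly in $r$. I would maximize $\psi(r) = -r^2\log r$ on $(0,1]$; since $\psi'(r) = -r(2\log r + 1)$, the unique interior critical point is $r = e^{-1/2}$, at which $\psi(e^{-1/2}) = \tfrac{1}{2e}$. As $\psi$ vanishes at both endpoints, this is the maximum, so $r^{-r^2} = e^{\psi(r)} \le e^{1/2e} = C$, and the estimate becomes $\|f_r\|_{1/r^2,\g} \le C^n\,\|f\|_{1,\g}$, as claimed.

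\emph{Main obstacle and edge cases.} The argument is essentially a clean interpolation between the $L^1$ data and the $L^\infty$ bound, so I do not anticipate a serious analytic obstacle; the genuinely delicate point is recognizing the factorization together with the identity $r^2(q-1)=1-r^2$, which is exactly what forces the leftover Gaussian integral to reproduce $\|f\|_{1,\g}$ and makes the optimal constant $e^{1/2e}$ surface. The remaining care concerns boundary values: for $r=1$ everything degenerates to the trivial $L^1$ identity (constant $r^{-n}=1$), and for $r=0$ the dilate $f_0\equiv f(0)$ is constant with $\|f_0\|_\infty = f(0)\le\|f\|_{1,\g}\le C^n\|f\|_{1,\g}$ by Lemma~\ref{infinity} evaluated at the origin. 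One should also dispose of the trivial cases $\|f\|_{1,\g}\in\{0,\infty\}$ at the outset.
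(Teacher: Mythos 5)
Your proof is correct and follows essentially the same route as the paper: both arguments factor the integrand as $f_r\cdot f_r^{\,q-1}$, control the second factor by the $L^\infty$ bound of Lemma \ref{infinity}, reduce to $\|f_r\|_{1/r^2,\g}\le r^{-nr^2}\|f\|_{1,\g}$ via the substitution $u=rx$, and then maximize $r^{-r^2}$ over $(0,1]$ to get $C=e^{1/2e}$. The only (cosmetic) difference is that the paper performs the change of variables first, working with $a(y)=f(y)e^{-|y|^2/2}$, whereas you factor before substituting.
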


\begin{proof}
Denote  $I_r= \left(\|f_r\|_{1/r^2,\g}\right)^{1/r^2} =  \int f(rx)^{1/r^2}\, d\g(x)$.  By the change of variables $y=rx$, the integral  $I_r$ can be written as
\[ \frac{(2\pi)^{-n/2}}{r^n}\int a(y)^{1/r^2}\, dy, \] 
where $a(y)= f(y) \exp(-|y|^2/2)$.  By Lemma \ref{infinity},  $a(y)\leq I_1$, which implies that
$a(y)^{1/r^2-1}\leq I_1^{1/r^2-1}$. 

\medskip

\noindent Now write  $a(y)^{1/r^2}= a(y) a(y)^{1/r^2-1}$.  Then
\[ I_r=\frac{(2\pi)^{-n/2}}{r^n} \int a(y) a(y)^{1/r^2-1}dy
\leq r^{-n} \left(\int a(y) (2\pi)^{-n/2}\, dy\right) I_1^{1/r^2-1} 
= r^{-n} I_1^{1/r^2-1}I_1 = r^{-n} I_1^{1/r^2}. \]
Consequently, $(I_r)^{r^2}\le (r^{-nr^2})\,I_1$.  This can be read as: $ \| f_r \|_{1/r^2,\g} \leq r^{-nr^2} \|f\|_{1,\g}$.  The function $r\mapsto ({\frac{1}{r}})^{r^2}$ is maximized on $(0,1]$ by $C=e ^{\frac{1}{2e}}\approx 1.445$. This completes the proof.
\end{proof}

\section{Hypercontractivity inequalities for probability measures} \label{sect convolution}

In this section we study hypercontractivity properties of \lsh\ functions with respect to any probability measure $m$.  We have already seen in Proposition \ref{contraction}
that, for rotationally invariant measures $m$, the semigroup $S_r$ is always an $L^p$ contraction.

\begin{thm}\label{convol}  Fix $q>1$ and $r\in(0,1]$.  Suppose that $\mu_1$ and $\mu_2$ are two probability measures on $\RR^n$ which verify the hypercontractivity
inequality
\begin{equation}\label{hcq}
\| f_r \|_{q,\mu}
\leq   \|f\|_{1,\mu}
\end{equation}
for any continuous \lsh\ function $f$.  It at least one of $\mu_1$ and $\mu_2$ is compactly-supported, then the convolved measure $\mu_1*\mu_2$  also satisfies (\ref{hcq}).  
\end{thm}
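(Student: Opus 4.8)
The plan is to integrate out one convolution factor at a time, invoking the hypothesized inequality (\ref{hcq}) for $\mu_1$ and then for $\mu_2$, with the key intermediate object being the $\mu_1$-average of the translates of $f$. Since $\mu_1*\mu_2 = \mu_2*\mu_1$, I may assume without loss of generality that $\mu_1$ is the compactly-supported factor. Throughout, $f$ is a continuous \lsh\ function (not identically zero, which is automatic from Definition~\ref{logsh}), and I will use repeatedly that for each fixed $z\in\R^n$ the translate $y\mapsto f(z+y)$ is again continuous \lsh, as noted in the proof of Lemma~\ref{infinity}.

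First I would unfold the convolution. Since $f\ge 0$, Tonelli's theorem gives
\[ \|f_r\|_{q,\mu_1*\mu_2}^q = \int_{\R^n}\int_{\R^n} f(r(x+y))^q\,\mu_1(dx)\,\mu_2(dy). \]
Fixing $y$ and setting $g_y(x)=f(x+ry)$ — an \lsh\ function of $x$ — I observe that $(g_y)_r(x)=g_y(rx)=f(r(x+y))$, so the inner integral is precisely $\|(g_y)_r\|_{q,\mu_1}^q$. Applying (\ref{hcq}) for $\mu_1$ to the continuous \lsh\ function $g_y$ then yields
\[ \int_{\R^n} f(r(x+y))^q\,\mu_1(dx) \le \|g_y\|_{1,\mu_1}^q = \left(\int_{\R^n} f(x+ry)\,\mu_1(dx)\right)^q = G(ry)^q, \]
where I set $G(y):=\int_{\R^n} f(x+y)\,\mu_1(dx)$. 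Integrating this pointwise bound over $\mu_2$ gives $\|f_r\|_{q,\mu_1*\mu_2}^q \le \int_{\R^n} G(ry)^q\,\mu_2(dy) = \|G_r\|_{q,\mu_2}^q$.

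The crux is then to show that $G$ is itself a continuous \lsh\ function, so that (\ref{hcq}) for $\mu_2$ applies to it and gives $\|G_r\|_{q,\mu_2}\le\|G\|_{1,\mu_2}$. Since $\|G\|_{1,\mu_2}=\int_{\R^n}\!\int_{\R^n} f(x+y)\,\mu_1(dx)\,\mu_2(dy)=\|f\|_{1,\mu_1*\mu_2}$, combining the two displayed bounds and taking $q$-th roots finishes the argument. To see that $G$ is continuous \lsh\ I would appeal directly to Corollary~\ref{cor average 1}, applied with $\Omega=\operatorname{supp}(\mu_1)$, $\mu=\mu_1$, and the function $(\omega,y)\mapsto f(y+\omega)$: condition (1) holds since translates of the continuous \lsh\ $f$ are continuous \lsh, while the boundedness hypotheses (2)–(3) follow from the continuity of $f$ together with the compactness of $\operatorname{supp}(\mu_1)$, on which $f(\,\cdot+\omega)$ ranges over a compact set.

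This last point is exactly where compact support is essential, and it is the main obstacle: for a general \lsh\ $f$ — say an exponential $e^{\langle a,\cdot\rangle}$ — the translates $f(y+\omega)$ are unbounded in $\omega$ over all of $\R^n$, so without the compactness of one factor's support the averaged function $G$ need not satisfy the hypotheses guaranteeing that it is \lsh, and the second application of (\ref{hcq}) would be unavailable. Once $G$ is known to be continuous \lsh, the remainder is the routine bookkeeping with Tonelli's theorem and the two successive applications of the hypothesis described above.
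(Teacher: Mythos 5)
Your proof is correct and follows essentially the same route as the paper's: unfold the convolution, apply the hypothesis for $\mu_1$ to the translates $x\mapsto f(x+ry)$, show the averaged function $G(y)=\int f(x+y)\,\mu_1(dx)$ is continuous \lsh\ via Corollary~\ref{cor average 1} (which is exactly where compact support of one factor enters), and then apply the hypothesis for $\mu_2$ to $G$. The only difference is cosmetic notation ($G$ versus the paper's $h$), so there is nothing further to add.
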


\begin{proof}
Let $f$ be a continuous \lsh\ function, and suppose $\mu_1$ is compactly-supported.  We have
\begin{eqnarray*}
\int f(rz)^q d(\mu_1*\mu_2 )(z)&=&
\int\int f(rx+ry)^q\,d\mu_1(x)\,d\mu_2(y)\\
&\leq& \int \left(\int f(x+ry)\,d\mu_1(x) \right)^q d\mu_2(y) 
\end{eqnarray*}
since the function $x\mapsto f(x+ry)$ is continuous \lsh\ for each fixed $y\in\R^n$, and $\mu_1$ satisfies (\ref{hcq}).  Let $h(y)=\int f(x+y)\,d\mu_1(x)$, so that we have proven that
\begin{equation} \label{eq convol 1} \|f_r\|_{q,\mu_1\ast\mu_2}^q \le \int h(ry)^q\,d\mu_2(y) = \|h_r\|_{1,\mu_2}^q. \end{equation}
Since $f$ is continuous, the function $(x,y)\mapsto f(x+y)$ is continuous in both variables, and also \lsh\ in each.  Since $\mathrm{supp}\,\mu_1$ is compact and $f$ is continuous, all the conditions of Corollary \ref{cor average 1} are satisfied, and so $h$ is \lsh.  Thence, by the assumption of the theorem, the quantity on the right-hand-side of Equation \ref{eq convol 1} is bounded above by $\|h\|_{1,\mu_2}^q$.  By definition,
\[ \|h\|_{1,\mu_2} = \int h(y)\,d\mu_2(y) = \int\int f(x+y)\,d\mu_1(x)\,d\mu_2(y) = \|f\|_{1,\mu_1\ast\mu_2}, \]
and this proves that Inequality \ref{hcq} also holds for $\mu_1\ast\mu_2$.  \end{proof}


Most of the following results of this section concern the 1--dimensional case, i.e.\ log--convex functions on the real line.  In that case, one has the following surprisingly general hypercontractivity inequality. 

\begin{prop}\label{logconv}
For every symmetric probability measure $m$ on $\RR$, and  for any logarithmically convex function $f$ on $\RR$, the following inequality is true for any $r\in(0,1]$:
$$ \| f_r \| _{1/r,m}
\leq  \| f \|_{1,m}.$$   
\end{prop}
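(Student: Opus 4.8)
The plan is to reduce everything to a single elementary convexity bound combined with a symmetrization. Writing $p=1/r\ge 1$ and using $f(rx)=f(x/p)$ with $f\ge 0$, the claimed inequality $\|f_r\|_{1/r,m}\le\|f\|_{1,m}$ is equivalent, after raising both sides to the power $p$ (which preserves the inequality since $p\ge 1$), to
\[ \int f(x/p)^p\,dm(x) \le \Big(\int f\,dm\Big)^p. \]
If $\int f\,dm=\infty$ there is nothing to prove, so I assume it is finite.

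First I would exploit that $f$ is logarithmically convex on $\R$, i.e.\ that $\log f$ is convex. Since $x/p=\frac{1}{p}x+\big(1-\frac{1}{p}\big)\cdot 0$ is a convex combination (as $\frac{1}{p}=r\in(0,1]$), convexity of $\log f$ gives $\log f(x/p)\le \frac{1}{p}\log f(x)+\big(1-\frac{1}{p}\big)\log f(0)$; multiplying by $p$ and exponentiating yields the pointwise bound $f(x/p)^p\le f(x)\,f(0)^{p-1}$. Integrating against $m$ produces
\[ \int f(x/p)^p\,dm \le f(0)^{p-1}\int f\,dm. \]

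The remaining step is to compare $f(0)$ with $\int f\,dm$, and this is exactly where the symmetry of $m$ enters. A logarithmically convex function is in particular convex, so by symmetry of $m$,
\[ \int f\,dm = \int \frac{f(x)+f(-x)}{2}\,dm(x) \ge \int f(0)\,dm(x) = f(0), \]
the inequality being Jensen's applied at the midpoint $0$ of $x$ and $-x$. Since $p-1\ge 0$, this upgrades the previous display to $f(0)^{p-1}\int f\,dm \le \big(\int f\,dm\big)^{p-1}\int f\,dm=\big(\int f\,dm\big)^p$, which is precisely the desired inequality.

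I do not expect a serious obstacle; the only technical point is the possibility $f(0)=0$, which makes the bound $f(x/p)^p\le f(x)f(0)^{p-1}$ degenerate. I would handle this by regularizing: $f+\e$ is again logarithmically convex (being a sum of \lsh\ functions, by Proposition \ref{properties}), is strictly positive, and decreases to $f$ as $\e\downarrow 0$, so the general case follows from the strictly-positive case by monotone convergence. Note that the argument uses symmetry of $m$ only through the midpoint estimate $f(0)\le\int f\,dm$, which is why no further structure on $m$ is needed.
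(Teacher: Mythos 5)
Your proof is correct and follows essentially the same route as the paper's: the pointwise log-convexity bound $f(rx)^{1/r}\le f(0)^{1/r-1}f(x)$, followed by the midpoint estimate $f(0)\le\frac12[f(x)+f(-x)]$ integrated against the symmetric measure to get $f(0)\le\|f\|_{1,m}$. Your extra remark on the case $f(0)=0$ is harmless but not needed, since a log-convex function vanishing at $0$ vanishes identically and is excluded by the definition.
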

\begin{rem} Translating this statement into additive language, the dilation semigroup $T_t$ satisfies strong hypercontractivity with time to contraction at most $2\cdot t_J$, for any symmetric probability measure on $\RR$, for log--convex functions.  As explained above, a simple scaling $f\mapsto f^p$ yields the comparable result from $L^p\to L^q$ for $q\ge p >0$. \end{rem}

\begin{proof}
By the log--convexity of $f$, for any $x\in\RR$
\[ f(rx)\leq f(0)^{1-r} f(x)^r, \]
which implies that $f(rx)^{1/r}\leq f(0)^{1/r-1} f(x)$.  Then by $m$-integration,
\[ \int f(rx)^{1/r}dm(x)\leq f(0)^{1/r-1}\vert\vert f\vert\vert _{1,m}. \] 
Since $f$ is convex, $f(0)\le \frac{1}{2}[f(x)+f(-x)]$ for all $x$.  Integrating and using the symmetry of $m$ yields $f(0)\leq \vert\vert f\vert\vert _{1,m}$.   Consequently,
\[ \int f(rx)^{1/r}dm(x)\leq \vert\vert f\vert\vert _{1,m}^{1/r}, \] 
and the Proposition follows.
\end{proof}

\begin{rem} Proposition \ref{logconv} remains true for rotationally invariant
measures $m$ and log--convex functions $f$ on $\RR^n$.
This proof fails, however, for general \lsh\ functions on $\RR^n$ when $n\geq 2$.
\end{rem}

\begin{rem} Subject to additional regularity on $m$, the symmetry condition in Proposition \ref{logconv} can be replaced with the much weaker assumption that $m$ is centred: i.e.\ $m$ has a finite first moment, and $\int x\,m(dx)=0$.  In short, fix a log-convex $f$, and suppose that $m$ is regular enough that the function $\eta(r) = \int f(rx)\,m(dx)$ is differentiable, so that $\eta'(r) = \int f'(rx)x\,m(dx)$.  (It is easy to see, from convexity of $f$, that $f_r\in L^1(m)$ for each $r$, provided $f\in L^1(m)$.)  Then $\eta'(0) = f'(0) \int x\,m(dx)=0$, and since $f$ is convex, $f'$ is increasing which means that $x f'(rx) \ge x f'(x)$ for all $x$, $r\ge0$, so $\eta'(r) \ge \eta'(0) = 0$.  Thus, $\int f\,dm = \eta(1) \ge \eta(0) = f(0)$, and the rest of the above proof follows.  For this to work, it is necessary to assume (at minimum) that the functions $\frac{\partial}{\partial r} f(rx) = f'(rx)x$ are uniformly bounded in $L^1(m)$; a convenient way to achieve this is to assume that functions $g\in L^1(m)$ for which $x\mapsto x g'(x)$ is also in $L^1(m)$ are dense in $L^1(m)$.  The kinds of measures for which such a Sobolev-space density is known is a main topic of our subsequent paper \cite{kl}.
\end{rem}

The problem in general is to find, for a fixed measure $m$, the maximal exponent $q$ such that $\| f_r \|_{q,m} \leq   \|f\|_{1,m}$ for every $r\in(0,1]$ and any log-convex function $f$ on $\R$.  For symmetric Bernoulli measures we will show that  the optimal exponent $q$ is the same as for Gaussian measures. 

\begin{prop}\label{Bern} 
If $m=\frac{1}{2}(\delta _1 +\delta _{-1})$ then
\begin{equation}\label{bern} \| f_r \|_{1/r^2,m}\leq \|f\|_{1,m} \end{equation}
for every $r\in(0,1]$ and any log-convex function $f$. 
\end{prop}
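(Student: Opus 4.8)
The plan is to evaluate both norms against the two-atom measure $m$ and to reduce \eqref{bern} to a single scalar inequality. Since $m=\tfrac12(\delta_1+\delta_{-1})$, the claim is exactly
\[ \left(\tfrac12\bigl(f(r)^{1/r^2}+f(-r)^{1/r^2}\bigr)\right)^{r^2}\le \tfrac12\bigl(f(1)+f(-1)\bigr), \]
so only the four values $f(\pm1)$, $f(\pm r)$ occur. As $\pm r\in[-1,1]$, I would first use convexity of $\log f$ to push the inner values to the endpoints: writing $r=\tfrac{1+r}{2}\cdot 1+\tfrac{1-r}{2}\cdot(-1)$ and $-r=\tfrac{1-r}{2}\cdot 1+\tfrac{1+r}{2}\cdot(-1)$,
\[ f(r)\le f(1)^{(1+r)/2}f(-1)^{(1-r)/2},\qquad f(-r)\le f(1)^{(1-r)/2}f(-1)^{(1+r)/2}. \]
Because the right-hand side of the target depends only on $f(\pm1)$, substituting these bounds and factoring out the common power $(ab)^{(1-r)/(2r^2)}$ reduces the whole statement to an inequality in the two nonnegative numbers $a=f(1)$, $b=f(-1)$, namely
\[ (ab)^{(1-r)/2}\Bigl(\tfrac12\bigl(a^{1/r}+b^{1/r}\bigr)\Bigr)^{r^2}\le\tfrac12(a+b), \]
which I will call $(\star)$.

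Next I would exploit the homogeneity of $(\star)$: both sides scale by $\lambda$ under $(a,b)\mapsto(\lambda a,\lambda b)$ (the left because $(1-r)+r\cdot\tfrac{1}{r}\cdot r=1$ after the $r^2$ power), so I may rescale to $ab=1$ and, since $(\star)$ is symmetric in $a,b$, set $a=e^{c}$, $b=e^{-c}$ with $c\ge0$. Then $(ab)^{(1-r)/2}=1$ and $(\star)$ collapses to the clean comparison
\[ \cosh\!\bigl(\tfrac{c}{r}\bigr)^{r^2}\le\cosh c, \]
i.e., with $p=1/r\ge1$, to $\log\cosh(cp)\le p^2\log\cosh c$.

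This last estimate I would obtain from the monotonicity statement that $H(t)=\dfrac{\log\cosh t}{t^{2}}$ is non-increasing on $(0,\infty)$: since $cp\ge c$, monotonicity gives $\dfrac{\log\cosh(cp)}{(cp)^2}\le\dfrac{\log\cosh c}{c^2}$, which rearranges to precisely what is needed. To prove $H'(t)\le0$ it suffices to show $2\log\cosh t\ge t\tanh t$ for $t>0$; setting $\chi(t)=2\log\cosh t-t\tanh t$ one computes $\chi(0)=\chi'(0)=0$ and $\chi''(t)=2t\,\mathrm{sech}^2 t\,\tanh t\ge0$, whence $\chi\ge0$.

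I would identify the genuinely substantive steps as the reduction of a general log-convex $f$ to the two-point inequality $(\star)$ together with the observation that, after normalizing $ab=1$, $(\star)$ is nothing but $\cosh(c/r)^{r^2}\le\cosh c$; once this structure is visible, the remaining one-variable bound is routine calculus and is where I expect no real difficulty. The only points needing a word of care are the degenerate cases $f(1)=0$ or $f(-1)=0$, which are harmless since the convexity bounds then force the corresponding $f(\pm r)$ to vanish as well (a log-convex function on $\R$ has $\{f>0\}$ an interval), and the endpoint $r=1$, where $(\star)$ holds with equality.
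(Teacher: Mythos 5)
Your proof is correct and follows essentially the same route as the paper's: the log-convexity bounds $f(\pm r)\le f(1)^{(1\pm r)/2}f(-1)^{(1\mp r)/2}$ are exactly the paper's comparison of $f$ with the exponential $C e^{ax}$ matching $f$ at $\pm 1$, and the resulting scalar inequality $\cosh(c/r)^{r^2}\le\cosh c$ is settled in both cases by the monotonicity of $\log\cosh t/t^2$ on $(0,\infty)$. The only immaterial differences are that you verify the final calculus lemma with one extra derivative and explicitly treat the degenerate case $f(\pm 1)=0$, which the paper sidesteps by assuming $f$ strictly positive.
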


\begin{rem} \label{remark Bern} It follows from Proposition \ref{Bern}, and a simple rescaling argument, that the same strong hypercontractivity inequality holds for any symmetric Bernoulli measure $\frac12(\delta_a + \delta_{-a})$, $a>0$.
\end{rem}

\begin{proof}
{\it Step 1.} We justify that it is sufficient  to prove the proposition for the two-parameter family of functions $h(x)= C\exp(ax)$ with $a\in\R$ and $C>0$.  Take $f$ strictly positive. Then there exists $h$ of the form $C\exp(ax)$ such that  
the  functions $f$ and $h$ are equal on the set $\{-1,+1\}$.   Assume now that $f$ is log-convex. 
Then $f\leq h$ on $[-1,1]$, and in particular  
 $f(r)\leq h(r)$ and  $f(-r)\leq h(-r)$. This implies that 
$$\int f(rx)^{1/r^2}\,dm(x)
\leq \int h(rx)^{1/r^2}\,dm(x).$$
If the function $h$ satisfies (\ref{bern}), we obtain
\[ \|f_r\|_{q,m}\leq \|h_r\|_{q,m}\leq \|h\|_{1,m}=\|f\|_{1,m},\]
the last equality following from the fact that $f$ and $h$ coincide on the support of $m$.  This gives the inequality (\ref{bern}) for $f$. 

\medskip

{\it Step 2}: We show the inequality (\ref{bern}) for $f(x)=e^{ax}$ (the constant $C$ obviously factors out of the desired inequality).  This is essentially  an exercise. One has to prove that 
\[ \left(\int \exp(ax/r)dm(x)\right)^{r^2} \leq \int \exp(ax) dm(x), \] 
i.e.\ $\left(\cosh(\frac{a}{r})\right)^{r^2} \leq \cosh a$  for $a$ real and $r\in (0,1]$.  Put $s=1/r$. Then $s\geq 1$ and the required  inequality becomes $\cosh(s a)\leq (\cosh a)^{s^2}$.   Taking logarithms and next  dividing by $s^2a ^2$, we are left to prove that 
\[ \frac{\log (\cosh (s a))}{s^2 a^2} \leq \frac{\log (\cosh a)}{a^2}. \] 
In other words, we must prove that the function $\log (\cosh x)/x ^2$ is decreasing  for $x\geq 0$. Taking the derivative, it is sufficient to see that  $\rho(x) = x\tanh x -2\log (\cosh x)$ is nonpositive for $x\geq 0$.  Well, $\rho(0)=0$, and $\rho'(x) = x/\cosh ^2 x-\tanh x=\frac{x-\sinh x\cosh x}{\cosh^2 x}$. This last  quotient is non-positive for its numerator is  equal to $x-(\sinh 2x)/2$.
\end{proof} 

\begin{rem}
Proposition \ref{bern} could be obtained from an inequality of A.\ Bonami \cite{b}  similarly to the manner in which Theorem \ref{hcs} was obtained from Nelson's hypercontractivity theorem for Gaussian measures.  She proved that for symmetric Bernoulli measures 
 the same classical hypercontractivity inequalities as for the Gaussian measure hold.
In order to prove  Proposition \ref{bern} for a log-convex function $f$, one compares it to the affine function which takes the same value as $f$ on $\{-1,1\}$. For a function  on $\{-1,1\}$, there is a unique affine function on the line which extends it. Thus  one can identify the space $C\{-1,1\}$ of functions on $\{-1,1\}$ and the space of affine functions on the line. We omit the details.  
\end{rem}

\begin{cor}
The symmetric uniform probability measure $\lambda_a$ on $[-a,a]$, $a>0$, satisfies the strong hypercontractivity property $\|f_r\|_{1/r^2,\lambda_a}\le \|f\|_{1,\lambda_a}$ for all \lsh\ functions.
\end{cor}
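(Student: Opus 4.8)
The plan is to realize $\lambda_a$ as a weak limit of finite convolutions of symmetric Bernoulli measures and then pass the hypercontractivity bound already available for those convolutions through the limit. First I would reduce to $a=1$: writing $f^{(a)}(x)=f(ax)$, which is again \lsh, the substitution $x=ay$ gives $\|f_r\|_{1/r^2,\lambda_a}=\|(f^{(a)})_r\|_{1/r^2,\lambda_1}$ and $\|f\|_{1,\lambda_a}=\|f^{(a)}\|_{1,\lambda_1}$, so the inequality for $\lambda_a$ and all \lsh\ $f$ is equivalent to the one for $\lambda_1$, exactly the rescaling already used in Remark \ref{remark Bern}. I would also record that in one variable no continuity hypothesis is needed: a nontrivial \lsh\ function on $\R$ is log--convex, and since $g=\log f$ is convex and locally integrable it cannot equal $-\infty$ on a set of positive measure. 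Indeed, if $g(x_0)=-\infty$ while $g(a)$ is finite, convexity forces $g\equiv-\infty$ on the open interval between $a$ and $x_0$, contradicting $g\in L^1_{\mathrm{loc}}$; hence $g$ is finite on all of $\R$, thus continuous, and $f=e^{g}$ is continuous and strictly positive. Consequently the continuity requirement in Theorem \ref{convol} is automatic here.

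Next I would express $\lambda_1$ as an infinite convolution of dyadic Bernoulli measures. With $m_c=\frac12(\delta_c+\delta_{-c})$, whose characteristic function is $\cos(ct)$, set $\mu_n=m_{1/2}\ast m_{1/4}\ast\cdots\ast m_{1/2^n}$. Then
\[ \widehat{\mu_n}(t)=\prod_{k=1}^n\cos(t/2^k)=\frac{\sin t}{2^n\sin(t/2^n)}\xrightarrow[n\to\infty]{}\frac{\sin t}{t}=\widehat{\lambda_1}(t), \]
so by L\'evy's continuity theorem $\mu_n\rightharpoonup\lambda_1$. Each $\mu_n$ is supported in $[-(1-2^{-n}),1-2^{-n}]\subseteq[-1,1]$, and $\lambda_1$ is supported in $[-1,1]$ as well, so every measure in play lives on the fixed compact interval $[-1,1]$.

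I would then show that each $\mu_n$ satisfies $\|f_r\|_{1/r^2,\mu_n}\le\|f\|_{1,\mu_n}$ for every $r\in(0,1]$ and every \lsh\ $f$, by induction on $n$. Fix $r\in(0,1)$ and put $q=1/r^2>1$ (the case $r=1$ gives $q=1$ and is trivial). The base case is Proposition \ref{Bern} together with Remark \ref{remark Bern}, which gives the bound for each symmetric Bernoulli measure $m_{1/2^k}$. For the inductive step, $\mu_n=\mu_{n-1}\ast m_{1/2^n}$ is a convolution of two compactly supported measures, each satisfying (\ref{hcq}) with this $q$ and $r$, so Theorem \ref{convol} applies directly and yields the bound for $\mu_n$.

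Finally I would pass to the weak limit. Since $f$ is continuous and nonnegative, both $x\mapsto f(x)$ and $x\mapsto f(rx)^{1/r^2}$ are continuous and bounded on the compact set $[-1,1]$ that carries every $\mu_n$ and $\lambda_1$; hence $\mu_n\rightharpoonup\lambda_1$ gives $\int f\,d\mu_n\to\int f\,d\lambda_1$ and $\int f(rx)^{1/r^2}\,d\mu_n\to\int f(rx)^{1/r^2}\,d\lambda_1$. Letting $n\to\infty$ in $\big(\int f(rx)^{1/r^2}\,d\mu_n\big)^{r^2}\le\int f\,d\mu_n$ then produces $\|f_r\|_{1/r^2,\lambda_1}\le\|f\|_{1,\lambda_1}$, which is the claim. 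The step I expect to require the most care is this interchange of limit and $L^p$ norm; it is legitimate precisely because all approximating measures share a single compact support and one-dimensional \lsh\ functions are automatically continuous there. I would also emphasize why the direct route used for the Bernoulli case fails: dominating $f$ by the exponential agreeing with it on the support does not help when the support is a full interval rather than two points, which is exactly the reason the argument must be routed through the convolution theorem.
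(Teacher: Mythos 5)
Your proposal is correct and follows essentially the same route as the paper: approximate $\lambda_1$ by the dyadic Bernoulli convolutions $\mu_n=m_{1/2}\ast\cdots\ast m_{1/2^n}$, get the inequality for each $\mu_n$ from Proposition \ref{Bern}, Remark \ref{remark Bern} and Theorem \ref{convol}, and pass to the weak limit using the common compact support and the automatic continuity of one-dimensional \lsh\ (i.e.\ log--convex) functions, finishing with the rescaling to general $a$. Your added details (the characteristic-function verification of $\mu_n\rightharpoonup\lambda_1$ and the argument that log--convexity forces continuity) only make explicit what the paper leaves as "easy to see."
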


\begin{proof}
Let $ m_x=\frac{1}{2}(\delta_x+\delta_{-x})$.
It is easy to see that
\[ \mu_k:=m_{\frac{1}{2}}*m_{\frac{1}{4}} *\ldots* m_{\frac{1}{2^k}} \rightharpoonup \lambda_1,\ \ \ \ \ k\to\infty, \]
where we denote by $\rightharpoonup$ the convergence in law. By the Proposition \ref{Bern} (and the proceeding Remark \ref{remark Bern}) and Theorem \ref{convol}, the inequality (\ref{bern}) holds for the  measures $\mu_k$. The supports of the measures $\mu_k$ and $\lambda_1$ are compact and included in the segment $[-1,1]$. If $f$ is log--convex on $\RR$, it is continuous and the convergence $\int_{-1}^1 f\,d\mu_k \ra \int_{-1}^1 f\,d\lambda_1$
follows from the convergence in law $\mu_k \Rightarrow \lambda_1$.  The statement for all $a>0$ now follows from a simple rescaling argument.
\end{proof}

\section{Logarithmic Sobolev Inequalities for  LSH functions on $\RR$} \label{sect Log Sobolev}

In this section we will prove that a strong log--Sobolev inequality holds for log-subharmonic functions and Gaussian measures in 1 dimension. We will also show log-Sobolev Inequalities for  other 1--dimensional measures from previous sections, for  which we showed the strong hypercontractivity for \lsh\ functions (symmetric Bernoulli measures, uniform symmetric measures or any symmetric probability measure on $\RR$.)  Considerably more general log-Sobolev inequalities (in all dimensions) hold in the \lsh\ category; this will be discussed in \cite{kl}.

\medskip

Recall that the classical Gaussian Logarithmic Sobolev Inequality, cf. \cite{a,g4}, is

\begin{equation}\label{LSIclassic}
 {\mathcal E}(f^2)=\int |f|^2\log |f|^2 d\g -\|f\|_{2,\g}^2 \log\|f\|_{2,\g}^2 \le 2 \int f Lf d\g
\end{equation}
where $\g$ is the standard Gaussian measure, $L=-\Delta+E$ is the generator of the Ornstein--Uhlenbeck semigroup and $f\in{\mathcal A}$, a standard algebra contained in the domain of the operator $L$.  For the Ornstein--Uhlenbeck semigroup ${\mathcal A}$ can be chosen as the space of ${\mathcal C}^\infty$ functions with slowly increasing derivatives. The expression ${\mathcal E}(f)$ is often called the entropy of $f$.

\medskip

The celebrated theorem of Gross \cite{g4} establishes the equivalence between the  hypercontractivity property of a semigroup $T_t$ with invariant measure $\mu$ and the log--Sobolev inequality relative to the generator $L$ of $T_t$.  More precisely, recalling the Nelson time $t_N=\frac{1}{2}\ln \frac{q-1}{p-1}$, the hypercontractivity inequalities $\|T_t f\|_{q,\mu}\le \|f\|_{p,\mu}$
for $t\ge c\,t_N(p,q)$ for $1<p\le q<\infty$ are, together, equivalent to the single log--Sobolev Inequality
 \begin{equation}\label{classicalLSIc}
{\mathcal E}(f^2)=\int |f|^2\log |f|^2 d\mu -\|f\|_{2,\mu}^2 \log\|f\|_{2,\mu}^2 \le 2c \int f Lf\, d\mu.  \end{equation}
In the Gaussian case these inequalities indeed hold with $c=1$.

\medskip

Now, let $f$ be a positive  subharmonic  function of class ${\mathcal C}^2$. Then  $\Delta f  \ge 0$
and $Lf\le Ef$.  From (\ref{LSIclassic}) it follows that 
\begin{equation}\label{LSIlshL2}
{\mathcal E}(f^2)= \int |f|^2\log |f|^2\, d\g -\|f\|_{2,\g}^2 \log\|f\|_{2,\g}^2 \le
2 \int f Ef\, d\g.
\end{equation}
If, moreover, $f$ is \lsh,  we set $g=f^2$ and using the fact that $Eg=2fEf$
we can write the last inequality as
\begin{equation}\label{LSIlshL1}
 {\mathcal E}(g)=\int  g\log g\, d\g -\|g\|_{1,\g} \log\|g\|_{1,\g} \le
 \int  Eg\, d\g.
\end{equation}

In this section we will prove  that a {\bf stronger} Log-Sobolev Inequality 
\begin{equation}\label{LSIlshL2better} 
{\mathcal E}(g) \le \frac{1}{2} \int Eg\,d\gamma
\end{equation}
holds for log--subharmonic functions $g$ and Gaussian measure $\gamma$ in 1 dimension, as well as with $\gamma$ replaced by a symmetric Bernoulli measure or symmetric uniform measure on $\RR$. Indeed, the constant factor $1$ from the inequality (\ref{LSIlshL1}) is optimal in general; here we prove that in the \lsh\ category, the constant is instead $\frac12$ (as in  \ref{LSIlshL2better}).

It may seem surprising that the integrals  $\int f Ef\, d\g$ from (\ref{LSIlshL2})
and, equivalently, $\int  Eg\, d\g$ from (\ref{LSIlshL1}) are positive when $f$ and $g$
are \lsh\ functions. The following proposition explains this phenomenon, which holds more generally for subharmonic functions.
\begin{prop}
Let  $m$ be a probability measure on  ${\mathbb R}^n$
which is $O(n)$ invariant, and let and $g\in{\mathcal C}^1$ be a subhamronic function.
Then
\[ I=\int Eg(x) dm(x)\ge 0. \]
\end{prop}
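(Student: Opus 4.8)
The plan is to show that $I=\int Eg\,dm\ge 0$ by relating the Euler operator to the derivative of the dilation semigroup and then invoking the contraction property of $S_r$ established in Proposition \ref{contraction}. The key observation is that $Eg(x)=x\cdot\nabla g(x)=-\frac{d}{dt}\big|_{t=0} g(e^{-t}x)=-\frac{d}{dt}\big|_{t=0}(T_t g)(x)$, so that, formally,
\[ I=\int Eg\,dm = -\frac{d}{dt}\Big|_{t=0}\int (T_tg)(x)\,dm(x). \]
Equivalently, writing $\Phi(r)=\int g(rx)\,dm(x)=\int S_r g\,dm$ for $r\in[0,1]$, one has $Eg(x)=r\frac{d}{dr}g(rx)\big|_{r=1}$, and hence $I=\Phi'(1)$ (after differentiating under the integral sign). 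The strategy is then to prove that $\Phi$ is non-decreasing on $(0,1]$, which forces $\Phi'(1)\ge 0$, i.e.\ $I\ge0$.

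First I would justify differentiating under the integral sign so that $I=\int Eg\,dm=\Phi'(1)$, using that $g\in\mathcal C^1$ and $m$ is a probability measure (so that the relevant difference quotients are dominated locally; for compactly supported $m$ this is immediate, and in general one can argue on the support). Next, the heart of the argument reuses the mechanism from the proof of Proposition \ref{contraction}: because $m$ is $O(n)$-invariant, I would write
\[ \Phi(r)=\int_{\R^n} g(rx)\,dm(x)=\int_{\R^n}\int_{O(n)} g(r\alpha x)\,d\alpha\,dm(x)=\int_{\R^n} \tilde g(rx)\,dm(x), \]
where $\tilde g(x)=\int_{O(n)} g(\alpha x)\,d\alpha$. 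By Corollary \ref{cor average 2}, since $g$ is subharmonic, $\tilde g$ is subharmonic, radially symmetric, and \emph{radially non-decreasing}: there is a non-decreasing $\psi$ with $\tilde g(x)=\psi(|x|)$. Consequently $\tilde g(rx)=\psi(r|x|)\le \psi(|x|)=\tilde g(x)$ for $r\in[0,1]$, so $\Phi(r)=\int \psi(r|x|)\,dm(x)$ is manifestly non-decreasing in $r$ (as the integrand is non-decreasing in $r$ pointwise). Therefore $\Phi$ is non-decreasing on $(0,1]$ and $\Phi'(1)\ge 0$, giving $I\ge0$.

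The main obstacle I anticipate is purely technical rather than conceptual: namely the rigorous justification of differentiation under the integral sign and the passage $I=\Phi'(1)$, together with ensuring $\psi$ (equivalently $\tilde g$) is regular enough that the monotonicity of $\Phi$ transfers cleanly to the one-sided derivative at $r=1$. Since $\tilde g$ inherits only radial monotonicity (not necessarily differentiability) from the maximum principle, the cleanest route avoids differentiating $\psi$ and instead argues directly: the difference quotient $\frac{\Phi(1)-\Phi(r)}{1-r}\ge 0$ for all $r<1$, and its limit as $r\uparrow 1$ equals $\Phi'(1)=I$ by the dominated-convergence justification above; hence $I\ge0$. An alternative, equally clean finish is to note $\int Eg\,dm = \lim_{r\uparrow 1}\frac{1}{1-r}\big(\Phi(1)-\Phi(r)\big)$ and observe each term $\Phi(1)-\Phi(r)=\int(\tilde g(x)-\tilde g(rx))\,dm(x)\ge 0$ by the pointwise inequality $\tilde g(rx)\le \tilde g(x)$, so the limit is non-negative.
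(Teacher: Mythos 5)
Your proposal rests on exactly the same key fact as the paper's proof --- Corollary \ref{cor average 2}, that the rotational average $\tilde g$ of a subharmonic function is radially non-decreasing --- but you organize the argument globally where the paper organizes it pointwise, and this difference matters. The paper fixes $x$, writes $\int_{O(n)} Eg(\alpha x)\,d\alpha = r\,\frac{\partial}{\partial r}\int_{S^{n-1}} g(ru)\,\sigma(du)$ with $r=\|x\|$, and concludes this is $\ge 0$ for each $x$; the only interchange of derivative and integral happens over the compact sphere, where it is harmless for $g\in\mathcal{C}^1$. Integrating a pointwise non-negative function against $dm$ then finishes the proof with no further analysis. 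You instead integrate first, form $\Phi(r)=\int g(rx)\,dm(x)$, and try to identify $I$ with $\Phi'(1)$.

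That identification is the genuine gap. Monotonicity of $\Phi$ gives you non-negative difference quotients $\frac{\Phi(1)-\Phi(r)}{1-r}=\int \frac{g(x)-g(rx)}{1-r}\,dm(x)\ge 0$ for free, but to conclude $I\ge 0$ you must show this quantity converges to $\int Eg\,dm$ as $r\uparrow 1$, i.e.\ you must pass the limit inside the integral over all of $\R^n$. Writing $\frac{g(x)-g(rx)}{1-r}=\int_0^1 x\cdot\nabla g\bigl((r+s(1-r))x\bigr)\,ds$, a dominated-convergence argument would require something like $\sup_{r\in[1/2,1]}|Eg(rx)|\in L^1(m)$, which does not follow from $Eg\in L^1(m)$ and $g\in\mathcal{C}^1$ alone; your remark that ``in general one can argue on the support'' does not help when $m$ is, say, Gaussian, with support all of $\R^n$, and your alternative finish via the difference quotients runs into the same interchange. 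The fix is precisely to localize as the paper does: perform the $O(n)$-average and the radial differentiation at each fixed $x$ before integrating against $m$, so that no uniform domination over a non-compact region is ever needed. With that reorganization your argument becomes the paper's proof.
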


\begin{proof}
We have
\[ I=\int dm(x) \int_{O(n)} Eg(\alpha x)\,d\alpha, \]
where $d\alpha$ denotes the Haar measure on $O(n)$.  Denote by $\sigma$ the normalized Lebegue measure on the unit sphere $S^{n-1}$. If $r=\|x\|$, we have
 \begin{eqnarray*}
\int_{O(n)} Eg(\alpha x)\,d\alpha=\int_{S^{n-1}} (Eg)(ru)\,\sigma(du) &=&
r \int_{S^{n-1}}\frac{\partial g }{\partial r} (ru)\, \sigma(du) \\
&=&r\frac{\partial }{\partial r}\int_{S^{n-1}} g(ru)\,\sigma(du) \ge 0
 \end{eqnarray*}
because the function $r\mapsto \int_{S^{n-1}} g(ru)\,\sigma(du)$
is  increasing (cf. Corollary \ref{cor average 2}).
\end{proof}

\subsection{Log-Sobolev Inequalities for measures with compact support} 

The following techniques work, in principle, quite generally.  However, the usual approximation techniques to guarantee integrability (convolution approximations and cut-offs) are unavailable in the category of subharmonic functions.  As such, we include this section which develops the relevant log-Sobolev inequalities in all dimensions, but only for compactly--supported measures (i.e.\ do the cut-off in the measure rather than the test functions).  Extension of these results to a much larger class of measures is the topic of \cite{kl}.

\begin{thm}\label{LSItheorem}
 Let $\mu$ be a probability measure on ${\mathbb R}^n$ with 
{\bf compact support}. Suppose  that for some $c>0$, the following strong hypercontractivity property holds: for $0<p\le q<\infty$ and  $f\in  L^p_{\lsh}(\mu)$,
\[ \|f_{e^{-t}}\|_{q,\mu}\le \|f\|_{p,\mu}\ \ \ {\rm for} \ t\ge c\cdot \textstyle{\frac{1}{2}}\log\frac{q}{p}. \]
Then for  any log--subharmonic function $f\in {\mathcal C^1}$ the following logarithmic Sobolev inequality holds:
 \begin{equation}\label{LSIcompact}
\int f^2\log f^2 d\mu -\|f\|_{2,\mu}^2 \log\|f\|_{2,\mu}^2 \le c \int f Ef d\mu.
 \end{equation}
\end{thm}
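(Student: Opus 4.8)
The plan is to run Gross's differentiation argument, but along the \emph{Janson} critical-exponent path rather than the Nelson one. Fix the initial exponent $p=2$ and parametrize the boundary of the contraction region by $q(t)=2e^{2t/c}$, which is exactly the largest $q$ for which the hypothesis guarantees $\|f_{e^{-t}}\|_{q,\mu}\le\|f\|_{2,\mu}$. Define
\[ \beta(t)=\|f_{e^{-t}}\|_{q(t),\mu},\qquad \Phi(t)=\log\beta(t)=\frac{1}{q(t)}\log F(t),\qquad F(t)=\int f(e^{-t}x)^{q(t)}\,d\mu(x). \]
Since $q(0)=2$ and $f_{e^0}=f$, we have $\beta(0)=\|f\|_{2,\mu}$, while the assumed (SHC) gives $\beta(t)\le\beta(0)$ for every $t\ge0$. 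Thus $t=0$ is a right-endpoint maximum of $\Phi$, and the one-sided derivative satisfies $\Phi'(0)\le0$. The whole theorem comes from writing this single inequality out explicitly.

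To make the logarithms harmless I would first reduce to the case $f>0$ on $\mathrm{supp}\,\mu$: the function $f+\e$ is again $\mathcal{C}^1$ and \lsh\ by Proposition \ref{properties} (a positive constant is \lsh), it is bounded below by $\e$, and since $E(f+\e)=Ef$ (the Euler operator annihilates constants) the inequality for $f+\e$ passes to the limit $\e\downarrow0$ by dominated convergence over the compact support. Assuming then $f>0$, I differentiate $F$ at $t=0$ using $\frac{d}{dt}f_{e^{-t}}|_{t=0}=-Ef$ and $q'(0)=4/c$, obtaining $F(0)=\|f\|_{2,\mu}^2$ and
\[ F'(0)=\frac{4}{c}\int f^2\log f\,d\mu-2\int f\,Ef\,d\mu. \]
Feeding this into $\Phi'(0)=-\frac{q'(0)}{q(0)^2}\log F(0)+\frac{F'(0)}{q(0)F(0)}$ and using $\int f^2\log f\,d\mu=\tfrac12\int f^2\log f^2\,d\mu$, I get
\[ \Phi'(0)=\frac{1}{c\,\|f\|_{2,\mu}^2}\Big[\int f^2\log f^2\,d\mu-\|f\|_{2,\mu}^2\log\|f\|_{2,\mu}^2\Big]-\frac{1}{\|f\|_{2,\mu}^2}\int f\,Ef\,d\mu. \]
Multiplying the inequality $\Phi'(0)\le0$ by $c\,\|f\|_{2,\mu}^2>0$ yields exactly (\ref{LSIcompact}). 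The gain of the factor $2$ over the classical constant is visibly an arithmetic consequence of $q'(0)=4/c$ along the Janson path (versus $2/c$ along Nelson's) together with the use of $E$ in place of $L$.

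The only real work is justifying the differentiation under the integral sign defining $F'(0)$, and this is precisely where compact support enters. On $\mathrm{supp}\,\mu$, which is compact, $f$ and $\nabla f$ are bounded (so after the reduction $f$ is bounded away from $0$, and $Ef=x\cdot\nabla f$ is bounded), and for $t$ in a fixed neighbourhood of $0$ the integrand $f(e^{-t}x)^{q(t)}$ together with its $t$-derivative $q'(t)f_{e^{-t}}^{q(t)}\log f_{e^{-t}}+q(t)f_{e^{-t}}^{q(t)-1}\frac{d}{dt}f_{e^{-t}}$ are uniformly bounded. A constant is $\mu$-integrable because $\mu$ is finite, so dominated convergence legitimizes $F'(t)=\int\partial_t(\cdots)\,d\mu$ and the continuity of $F'$ at $0$. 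I expect this to be the main (though routine) obstacle; note that without compactness these integrands need not be dominated uniformly in $t$, which is exactly why for non-compactly-supported measures such as $\g$ one must instead approximate the measure by compactly supported ones rather than differentiate directly.
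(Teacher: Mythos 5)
Your proposal is correct and follows essentially the same route as the paper: Gross's differentiation argument along the Janson critical path $q(t)=2e^{2t/c}$ (the paper writes it multiplicatively as $q(r)=2r^{-2/c}$ and differentiates at $r=1$ rather than $t=0$, which is only a change of variables), with compact support invoked exactly where you invoke it, to dominate the $t$-derivative of the integrand and justify differentiation under the integral sign. Your preliminary reduction to $f+\e$ is a small extra precaution the paper omits (it instead notes directly that the expressions $f_r^{q(r)}\log f_r^{q(r)}$ and $f_r^{q(r)-1}(Ef)_r$ are bounded on the compact support), but it changes nothing essential.
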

\begin{rem}
\begin{enumerate}
\item The condition $f\in {\mathcal C^1}$ is natural to ensure a good sense
of the expression $Ef$ in (\ref{LSIcompact}). In the  classical case in \cite{a} one 
supposes $f\in{\mathcal A}\subset{\mathcal C^\infty}$ and such an LSI inequality is equivalent  to the hypercontractivity property (\cite{a}, Theorem 2.8.2).
\item  In the case of strong hypercontractivity with optimal $q=p/r^2$ (symmetric Bernoulli measures and their convolutions, symmetric uniform measures on $[-a,a]$), the constant $c$ is equal to $1$. Also Gaussian measures on $\RR^n$ have the constant $c=1$ but evidently they are not covered by the Theorem \ref{LSItheorem}.
When  $q=p/r$ (any symmetric measure on ${\mathbb R}$), the constant $c$ is equal to $2$.  The time $t_J=\frac{1}{2}\log\frac{q}{p}$ appearing in Theorem \ref{LSItheorem} is Janson's time.
\item Theorem \ref{LSItheorem} is stated and proved here for compactly-supported measures, a class not including the most important Gaussian measures.  In the next section we prove it {\em does} hold for Gaussian measure $1$ dimension, see Theorems \ref{LSItheoremGaussianL2} and \ref{LSItheoremGaussianL1}).  In fact the same strong log-Sobolev inequality holds for Gaussian measures (and beyond) in all dimensions; this will be covered in \cite{kl}.  Let us reiterate that the following proof applies to a much wider class of measures, but the precise regularity conditions are complicated by the fact that cut-off approximations do not preserve the cone of log--subharmonic functions. 
\end{enumerate}
\end{rem}

\begin{proof}
Let $p=2$ and $t$ be the critical time $t=c\cdot \frac{1}{2}\log\frac{q}{p}$.
Then the variable $r=e^{-t}$ satisfies $q(r)=2r^{-2/c}$.
 The method of proof is classical and consists of differentiating the function
$$
\alpha(r)=\|f_r\|_{q(r),\mu}
$$
 at $r=1$. By strong hypercontractivity, $\alpha(r)\le \alpha(1)$, so $\alpha'(1)\ge 0$ if  we prove 
the existence of this derivative.

\medskip

Define $\beta(r)=\alpha(r)^{q(r)}=\int f(rx)^{q(r)} d\mu(x)$ and let 
$\beta_x(r)=f(rx)^{q(r)}$, so that
$\beta(r)=\int \beta_x(r) d\mu(x)$. Then
$$
\frac{\partial}{\partial r} \log\beta_x(r)=q'(r) \log f(rx) +\frac{q(r)}{f(rx)}x\cdot \nabla f(rx).
$$
Since $q'(r)=-\frac{2}{rc}q(r)$, we  compute
\begin{equation}\label{derivative}
 \beta'_x(r)=-\frac{2}{rc} f_r(x)^{q(r)} \log f_r(x)^{q(r)} +\frac{q(r)}{r}f_r(x)^{q(r)-1} (Ef)_r(x).
\end{equation}
 Let $0<\epsilon<1$. As $f\in {\mathcal C^1}$, the expression on the right-hand side of (\ref{derivative})
is bounded for $r\in (1-\epsilon, 1]$ and $x\in {\rm supp}(\mu)$ (which is compact).  The Dominated Convergence Theorem then implies that
\begin{equation}\label{exchange}
\beta'(r)=\frac{\partial  }{ \partial r} \int \beta_x(r)\, d\mu(x)= \int \beta'_x(r)d\mu(x).
\end{equation}
Finally, since $\alpha(r)=\beta(r)^{1/q(r)}$ and $\beta>0$, we have that $\alpha$ is 
$ {\mathcal C^1}$ on $(1-\epsilon,1]$ and a simple calculation shows that
\[ \alpha'(r)=\frac{\alpha(r)}{q(r) \beta(r)}\left[\frac{2}{rc} \beta(r)\log\beta(r) +\beta'(r)\right].  \]
Now, taking $r=1$, applying  $\alpha'(1)\ge 0$ and the formulas (\ref{derivative})
and (\ref{exchange}) we obtain 
 \begin{eqnarray*}
0&\le& \frac{2}{c} \beta(1)\log \beta(1) +\beta'(1)\\
 &=& \frac{2}{c} \|f\|_{2,\mu}^2 \log\|f\|_{2,\mu}^2 - \frac{2}{c} \int f^2\log f^2 d\mu + 2 \int f Ef d\mu,
 \end{eqnarray*}
and this is the logarithmic Sobolev inequality (\ref{LSIcompact}).
\end{proof}

\noindent For $p>0$ we define spaces $L^p_E(\mu)=\{f\,;\, f\in L^p(\mu)\ {\rm and}\ Ef\in L^p(\mu)\}$
and $L^p(\mu) \log L^p(\mu)=\{f\,;\,\int f^p|\log f^p| d\mu <\infty\}$.  The former is a Sobolev space, the latter an Orlicz space, related to the logarithmic Sobolev inequality \ref{LSIcompact}; indeed, in the case $p=2$, they are the spaces for which the right-- and left--hand sides (respectively) of that inequality are finite.

\medskip

Appealing to the surprising Proposition \ref{logconv},  and Theorem \ref{LSItheorem}, we have the following.

\begin{cor}\label{LSItheoremFull} Let $\mu$ be a symmetric probability measure on ${\mathbb R}$.  Then for  any log-subharmonic function $f\in L^2(\mu) \log L^2(\mu) \cap L^2_E(\mu)\cap{\mathcal C^1}$ the  following logarithmic Sobolev inequality  holds:
 \begin{equation*} 
\int f^2\log f^2 d\mu -\|f\|_{L^2(\mu)}^2 \log\|f\|_{L^2(\mu)}^2 \le
2 \int f Ef d\mu.
 \end{equation*}
\end{cor}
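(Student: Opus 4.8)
The plan is to deduce this from Proposition \ref{logconv} together with the implication (SHC)$\Rightarrow$(LSI) of Theorem \ref{LSItheorem}. On $\RR$ a log--subharmonic function is exactly a log--convex function, so Proposition \ref{logconv} applies directly. The one genuine obstruction is that Theorem \ref{LSItheorem} is stated only for compactly--supported measures, while here $\mu$ is an arbitrary symmetric probability measure; the integrability hypotheses $f\in L^2(\mu)\log L^2(\mu)\cap L^2_E(\mu)$ are precisely what is needed to remove this restriction by approximating \emph{in the measure}.

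First I would check that $\mu$ satisfies the strong hypercontractivity hypothesis of Theorem \ref{LSItheorem} with constant $c=2$. Applying Proposition \ref{logconv} to the log--convex function $f^p$ (which is again log--convex) gives $\|f_r\|_{p/r,\mu}\le\|f\|_{p,\mu}$ for every $p>0$ and $r\in(0,1]$; taking $r=p/q$ for $q\ge p$ yields $\|f_{p/q}\|_{q,\mu}\le\|f\|_{p,\mu}$. For a smaller dilation $r=e^{-t}$ with $t\ge\log(q/p)=2\,t_J(p,q)$, one factors $S_r=S_{r'}\circ S_{p/q}$ with $r'\le 1$ and invokes the $L^q$--contraction property of Proposition \ref{contraction} (valid because $\mu$ is symmetric, i.e.\ $O(1)$--invariant, and $f_{p/q}$ is log--subharmonic). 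Hence $\|f_{e^{-t}}\|_{q,\mu}\le\|f\|_{p,\mu}$ for all $t\ge c\,\tfrac12\log\frac qp$ with $c=2$, for every symmetric probability measure on $\RR$.

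Next, for each $N$ with $Z_N:=\mu([-N,N])>0$, I would set $\mu_N=Z_N^{-1}\mathbf{1}_{[-N,N]}\,\mu$, a symmetric probability measure with compact support. By the previous paragraph $\mu_N$ satisfies strong hypercontractivity with $c=2$, so Theorem \ref{LSItheorem} applies to our $\mathcal{C}^1$ log--subharmonic $f$ and gives
\[ \int f^2\log f^2\,d\mu_N-\|f\|_{2,\mu_N}^2\log\|f\|_{2,\mu_N}^2\le 2\int f\,Ef\,d\mu_N. \]
Clearing the normalization (writing $A_N=\int_{[-N,N]}f^2\,d\mu$ so that $\|f\|_{2,\mu_N}^2=A_N/Z_N$), this reads
\[ \int_{[-N,N]} f^2\log f^2\,d\mu - A_N\log(A_N/Z_N)\le 2\int_{[-N,N]} f\,Ef\,d\mu. \]

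Finally I would let $N\to\infty$. Since $Z_N\to1$ and $A_N\to\|f\|_{2,\mu}^2$, continuity of $t\mapsto t\log t$ gives $A_N\log(A_N/Z_N)\to\|f\|_{2,\mu}^2\log\|f\|_{2,\mu}^2$. The hypothesis $f\in L^2(\mu)\log L^2(\mu)$ makes $f^2\log f^2$ absolutely $\mu$--integrable, so dominated convergence yields $\int_{[-N,N]}f^2\log f^2\,d\mu\to\int f^2\log f^2\,d\mu$; and $f\in L^2(\mu)\cap L^2_E(\mu)$ forces $f\,Ef\in L^1(\mu)$ by Cauchy--Schwarz, whence $\int_{[-N,N]}f\,Ef\,d\mu\to\int f\,Ef\,d\mu$. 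Passing to the limit gives the asserted inequality. I expect the only real work to be this limit, and specifically verifying that the three integrability assumptions supply exactly the domination needed for each of the three terms; the analytic heart of the statement already resides in Theorem \ref{LSItheorem}, so the passage from compact support to a general symmetric $\mu$ is the single conceptual point.
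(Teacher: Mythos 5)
Your proof is correct and follows essentially the same route as the paper's: verify the strong hypercontractivity hypothesis with $c=2$ via Proposition \ref{logconv} (plus the contraction property for times beyond critical), apply Theorem \ref{LSItheorem} to compactly supported symmetric truncations of $\mu$, and pass to the limit in $N$. The only differences are technical: the paper truncates by placing the tail mass at the origin, $\mu_N=\mu|_{[-N,N]}+\mu([-N,N]^c)\delta_0$, and passes to the limit via weak convergence after first replacing $f$ by $f+\e$, whereas you normalize the restriction and invoke dominated convergence directly from the stated integrability hypotheses --- which is, if anything, the cleaner way to justify the limit.
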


\begin{rem} In the classical case it is sufficient to suppose only $f\in L^2_E(\mu)$; this actually implies that $f\in  L^2(\mu) \log L^2(\mu)$.  The proof of this fact involves  approximation by more regular (e.g. compactly  supported or bounded) functions, and these tools are unavailable to us here.
\end{rem}

\begin{proof}
By Proposition \ref{logconv} the measure $\mu$ as well as the measures
$\mu_N=\mu|_{[-N,N]} + \mu([-N,N]^c)\delta_0$ verify the strong hypercontractivity
property for  \lsh\  functions with $q=p/r$ and $c=2$.  Let $f$ verify the hypothesis of the corollary, and set $f^\e = f+\e$; it is easy to check that $f^\e$ also verifies all the conditions of the corollary.  By Theorem  \ref{LSItheorem},
for each $N$
\begin{equation*} 
\int (f^\e)^2\log (f^\e)^2 d\mu_N -\|f^\e\|_{2,\mu_N}^2 \log\|f^\e\|_{2,\mu_N}^2 \le
2 \int f^\e Ef^\e d\mu_N.
 \end{equation*}
When $N\ra \infty$, $\mu_N\rightharpoonup\mu$ (weak convergence), and since $f^\e\in\mathcal{C}^1$ and is strictly positive, all the functions $(f^\e)^2$, $(f^\e)^2\log (f^\e)^2$, and $f^\e Ef^\e$ are continuous; hence the integrals in the last formula converge to analogous integrals in terms of $f^\e$ with respect to the measure $\mu$.  Finally, we can let $\e\downarrow 0$ to achieve the result, by the Monotone Convergence Theorem.
\end{proof}

{\bf EDITED UP TO HERE}

\begin{cor}\label{LSItheoremFullL1}
 Let $\mu$ be a symmetric probability measure on ${\mathbb R}$.  
Then for  any log--subharmonic function $f\in L^1(\mu) \log L^1(\mu) \cap L^1_E(\mu)\cap{\mathcal C^1}$ the  following logarithmic Sobolev inequality  holds:
 \begin{equation*} 
\int f\log f d\mu -\|f\|_{1,\mu} \log\|f\|_{1,\mu} \le
 \int  Ef d\mu.
 \end{equation*}
\end{cor}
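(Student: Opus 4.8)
The plan is to mirror the proof of Corollary \ref{LSItheoremFull}, but in the $L^1$ formulation obtained from the substitution $g=h^2$ that already produced the passage from \eqref{LSIlshL2} to \eqref{LSIlshL1}. The point is that if $f$ is \lsh, $\mathcal{C}^1$ and strictly positive, then $h=\sqrt{f}$ is again \lsh\ (a power of an \lsh\ function, by Proposition \ref{properties}) and $\mathcal{C}^1$, and it satisfies $h^2=f$ together with $2h\,Eh=E(h^2)=Ef$. Consequently the $L^2$ logarithmic Sobolev inequality for $h$ is literally the $L^1$ inequality for $f$. However, one cannot simply quote Corollary \ref{LSItheoremFull} for $h=\sqrt{f}$: its hypothesis is $h\in L^2_E(\mu)$, i.e.\ $Eh\in L^2(\mu)$, which under $f=h^2$ is strictly stronger than $Ef=2h\,Eh\in L^1(\mu)$. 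Since our hypothesis is only $f\in L^1_E(\mu)$, I must re-run the approximation argument so that the weaker integrability suffices.

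First I would regularize and truncate simultaneously. Fix $\e>0$ and set $f^\e=f+\e$; this is \lsh, $\mathcal{C}^1$, and bounded below by $\e$, so $h^\e:=\sqrt{f^\e}$ is \lsh\ and $\mathcal{C}^1$. For the compactly supported symmetric measures $\mu_N=\mu|_{[-N,N]}+\mu([-N,N]^c)\delta_0$, Proposition \ref{logconv} gives the strong hypercontractivity property with $q=p/r$ (i.e.\ constant $c=2$) for \lsh\ functions. Applying Theorem \ref{LSItheorem} to $h^\e$ and $\mu_N$ — where no integrability issue arises since $h^\e$ and $Eh^\e$ are continuous, hence bounded, on the compact support of $\mu_N$ — yields the $L^2$ inequality for $h^\e$ with constant $2$. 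Rewriting it through $(h^\e)^2=f+\e$ and $2h^\e E h^\e=E(f+\e)=Ef$ (the Euler operator annihilates constants) turns it into
\begin{equation*}
\int (f+\e)\log(f+\e)\,d\mu_N - \Big(\|f\|_{1,\mu_N} + \e\Big)\log\Big(\|f\|_{1,\mu_N} + \e\Big) \le \int Ef\,d\mu_N ,
\end{equation*}
where I used that $\mu_N$ is a probability measure.

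Next I would let $N\to\infty$. Because of the explicit form of $\mu_N$, for any $\varphi\in L^1(\mu)$ one has $\int\varphi\,d\mu_N=\int_{[-N,N]}\varphi\,d\mu+\mu([-N,N]^c)\varphi(0)\to\int\varphi\,d\mu$; this is exactly where the hypotheses enter, since $(f+\e)\log(f+\e)\in L^1(\mu)$ follows from $f\in L^1(\mu)\log L^1(\mu)$, and $Ef\in L^1(\mu)$ is the assumption $f\in L^1_E(\mu)$. Passing to the limit gives the same inequality with $\mu$ in place of $\mu_N$. Finally, letting $\e\downarrow 0$, the term $(\|f\|_{1,\mu}+\e)\log(\|f\|_{1,\mu}+\e)$ converges by continuity, and $\int(f+\e)\log(f+\e)\,d\mu\to\int f\log f\,d\mu$ by dominated convergence, the integrand being controlled uniformly in $\e\in(0,1]$ by an $L^1(\mu)$ majorant (a constant on $\{f+\e\le 1\}$ and a multiple of $(f+1)\log(f+1)$ elsewhere). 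This produces the asserted inequality.

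I expect the main obstacle to be precisely the decoupling of the two hypotheses noted above: the natural reflex is to quote Corollary \ref{LSItheoremFull}, but its $L^2_E$ assumption is too strong under the square-root substitution, and the whole value of routing through the compactly supported $\mu_N$ (rather than working on $\mu$ directly) is that on a compact support the derivative term $Eh^\e$ is automatically integrable, so that in the limit only the weaker $L^1_E$ integrability of $Ef$ is required. The remaining delicate point is the uniform $L^1(\mu)$ domination of $(f+\e)\log(f+\e)$ as $\e\downarrow 0$, which is routine but must genuinely use $f\in L^1(\mu)\log L^1(\mu)$ rather than mere monotonicity, since $u\mapsto u\log u$ is not monotone near the origin.
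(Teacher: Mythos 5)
Your proposal is correct and follows essentially the same route as the paper: truncate $\mu$ to the compactly supported symmetric measures $\mu_N$, invoke Proposition \ref{logconv} and Theorem \ref{LSItheorem} (via the substitution $f\mapsto\sqrt{f+\e}$, which is exactly how the paper relates the $L^1$ and $L^2$ statements), then pass to the limits in $N$ and in $\e$. You also correctly isolate the one point the paper's own remark stresses --- that Corollary \ref{LSItheoremFull} cannot simply be quoted for $\sqrt{f}$ because $Eh\in L^2(\mu)$ is strictly stronger than $Ef\in L^1(\mu)$ --- and your explicit domination arguments for the two limiting steps are, if anything, more careful than the paper's brief appeal to weak convergence and monotone convergence.
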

\begin{proof}
 The proof is similar to the proof of the Corollary \ref{LSItheoremFull}.
Note, nevertheless, that Corollary  \ref{LSItheoremFullL1} does not follow
from Corollary \ref{LSItheoremFull} because the  hypothesis $Ef\in L^1(\mu)$
is weaker  than the condition $Ef\in L^2(\mu)$ supposed in Corollary \ref{LSItheoremFull}
(all other integrability hypotheses are equivalent by the transformation $f \mapsto f^2$ which maps $L^2$ onto $L^1$).
\end{proof}

\subsection{Log-Sobolev Inequality for Gaussian measures on $\RR$}

We formulate two versions of the Logarithmic Sobolev Ineaquality for log-subharmonic functions:
in the classical context $L^2(\gamma)$ (Theorem \ref{LSItheoremGaussianL2})
and in the more natural and technically simpler case $L^1(\gamma)$.

Both cases are nearly equivalent since $f\in L^2(\gamma)$ and log--subharmonic
is equivalent to $f^2\in L^1(\gamma)$ and log--subharmonic. But the integration
hypotheses of the theorems are slightly different, cf. the discussion in the proof
of the Corollary \ref{LSItheoremFullL1}.

\begin{thm}\label{LSItheoremGaussianL2}
 Let $\gamma$ be  the Gaussian measure with  density $\frac{1}{\sqrt{2\pi}}e^{-x^2/2}$ on ${\mathbb R}$.  
Then for any \lsh\ and ${\mathcal C^1}$ function  $f\in L^2(\gamma ) \log L^2(\gamma  ) \cap L^2_E(\gamma)$ the  following logarithmic Sobolev inequality  holds
 \begin{equation}\label{LSIGaussL2} 
\int f^2\log f^2 d\g -\|f\|_{2,\g}^2 \log\|f\|_{2,\g}^2 \le
 \int f Ef d\g.
 \end{equation}
\end{thm}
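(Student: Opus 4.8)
The plan is to prove Theorem \ref{LSItheoremGaussianL2} by approximating the Gaussian measure $\g$ from within the class of compactly-supported measures for which the \emph{sharp} strong hypercontractivity already yields the constant $c=1$, and then invoking Theorem \ref{LSItheorem}. The natural approximants come from the De Moivre--Laplace central limit theorem: for each $N$ set $\nu_N = \frac12(\delta_{1/\sqrt N} + \delta_{-1/\sqrt N})$ and let $\mu_N = \nu_N^{*N}$ be the $N$-fold convolution, i.e.\ the law of a normalized Bernoulli sum. Then $\mu_N \rightharpoonup \g$, each $\mu_N$ is symmetric, and each is supported in the compact interval $[-\sqrt N, \sqrt N]$. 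The first step is to check that every $\mu_N$ satisfies the full strong hypercontractivity hypothesis of Theorem \ref{LSItheorem} with $c=1$. Indeed, by Proposition \ref{Bern} together with Remark \ref{remark Bern}, each factor $\nu_N$ obeys $\|f_r\|_{1/r^2,\nu_N}\le \|f\|_{1,\nu_N}$ for log-convex $f$; since the factors are compactly supported, Theorem \ref{convol} propagates this inequality through convolution, so $\|f_r\|_{1/r^2,\mu_N}\le\|f\|_{1,\mu_N}$ for every $r\in(0,1]$. As $\mu_N$ is symmetric, hence $O(1)$-invariant, Proposition \ref{contraction} supplies the remaining $L^p\to L^q$ contractions, and the semigroup argument of the remark following Theorem \ref{hcs} assembles these into strong hypercontractivity at the Janson time, that is, with $c=1$.

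With this established, Theorem \ref{LSItheorem} applies to each compactly-supported $\mu_N$. Exactly as in the proof of Corollary \ref{LSItheoremFull}, I would apply it to the regularized function $f^\e = f+\e$ (still \lsh\ and $\mathcal{C}^1$ by Proposition \ref{properties}, now bounded below by $\e$ so that $\log (f^\e)^2$ is controlled and $E f^\e = Ef$), obtaining for each $N$
\begin{equation*} \int (f^\e)^2\log (f^\e)^2\,d\mu_N - \|f^\e\|_{2,\mu_N}^2\log\|f^\e\|_{2,\mu_N}^2 \le \int f^\e Ef^\e\,d\mu_N. \end{equation*}
It then remains to let $N\to\infty$, and afterwards $\e\downarrow 0$ by monotone convergence, to recover inequality (\ref{LSIGaussL2}) for $\g$.

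The crux, and the main obstacle, is the passage $N\to\infty$ in the three integrals $\int (f^\e)^2\,d\mu_N$, $\int (f^\e)^2\log (f^\e)^2\,d\mu_N$, and $\int f^\e Ef^\e\,d\mu_N$. Because these integrands are unbounded, the weak convergence $\mu_N\rightharpoonup\g$ does not by itself suffice; one needs convergence of integrals against $\mu_N$ for integrands exhibiting the growth permitted in the \lsh\ setting. This is precisely the role of the sharpened De Moivre--Laplace estimates of Section \ref{CLT}: they upgrade $\mu_N\rightharpoonup\g$ to convergence $\int g\,d\mu_N\to\int g\,d\g$ for the relevant class of growing test functions $g$. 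The integrability hypotheses $f\in L^2(\g)\log L^2(\g)\cap L^2_E(\g)$ guarantee that the three limiting $\g$-integrals are finite and, together with the $L^\infty$ bound of Lemma \ref{infinity}, supply the control needed to justify the limit.

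I expect this final step to be genuinely delicate rather than routine. A naive domination fails: the pointwise bound $f(x)\le \|f\|_{1,\g}\,e^{x^2/2}$ coming from Lemma \ref{infinity} gives only $f^2\lesssim e^{x^2}$, which is \emph{not} $\g$-integrable, so one cannot simply dominate the integrands by a fixed $\g$-integrable function. The resolution is to use the hypothesis $f\in L^2(\g)$ in tandem with the finer CLT control of Section \ref{CLT} to establish the required convergence (effectively a uniform-integrability statement along the approximating sequence). Once these three convergences are in hand, taking $N\to\infty$ and then $\e\downarrow 0$ yields exactly (\ref{LSIGaussL2}), completing the proof.
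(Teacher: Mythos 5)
Your setup is exactly the paper's: the approximants $\mu_N$ are the laws of the normalized Bernoulli sums, strong hypercontractivity with $c=1$ for each $\mu_N$ comes from Proposition \ref{Bern}, Remark \ref{remark Bern} and Theorem \ref{convol}, and Theorem \ref{LSItheorem} then gives the log-Sobolev inequality with constant $1$ for each compactly supported $\mu_N$. Up to that point the proposal is correct and matches the paper, which in fact runs the argument for the $L^1$ statement (Theorem \ref{LSItheoremGaussianL1}) and deduces the $L^2$ version by substituting $f^2$ for $f$ and using $fEf=\frac12 E(f^2)$.

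The genuine gap is the passage $N\to\infty$, which you correctly identify as the crux but then only describe as ``delicate'' and ``effectively a uniform-integrability statement'' without supplying the mechanism. In particular, Lemma \ref{infinity} plays no role here --- as you yourself observe, it only gives $f^2\lesssim e^{x^2}$, which is not $\g$-integrable --- and the strengthened CLT results of Section \ref{CLT} do not apply to arbitrary integrands in $L^2(\g)\log L^2(\g)$; they apply to two specific classes. The paper's actual argument is: (i) reduce to integrals over $[0,\infty)$ using the symmetry of $\mu_N$ and $\g$ and the fact that $x\mapsto f(-x)$ is again log-convex; (ii) use convexity of $f$ and $\log f$ to conclude that on some $[x_0,\infty)$ each of the integrands $f^2$, $f^2\log f^2$, $E(f^2)$ is either bounded or of at most linear growth (in which case the ordinary CLT, or Theorem \ref{CLTepsilon}, suffices) or else eventually strictly increasing; (iii) in the strictly increasing case invoke Theorem \ref{betterCLT}, which rests on the integration-by-parts identity of Proposition \ref{g'Psi} and the tail domination $\Psi_N\le C\,\Psi$ of Proposition \ref{tails} --- and which is proved only along the subsequence $N=4n^2$, so the limit must be taken along that subsequence (harmless, since the LSI holds for every $\mu_N$, but your ``let $N\to\infty$'' elides it). Without steps (ii)--(iii) the proof is incomplete; with them, your outline closes up into the paper's proof. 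Your extra regularization $f^\e=f+\e$ is not wrong but is not needed here: the paper handles $f\log f$ near zeros of $f$ directly within the case analysis.
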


\begin{thm}\label{LSItheoremGaussianL1}
 Let $\gamma$ be  as in Theorem \ref{LSItheoremGaussianL2}. Then for  any \lsh\ and ${\mathcal C^1}$ function   $f\in L^1(\gamma ) \log L^1(\gamma  ) \cap\ L^1_E(\gamma)$ the  following logarithmic Sobolev inequality  holds
 \begin{equation}\label{LSIGaussL1} 
\int f\log f d\gamma -\|f\|_{1,\gamma} \log\|f\|_{1,\gamma} \le
 \frac{1}{2}\int Ef d\gamma.
 \end{equation}
\end{thm}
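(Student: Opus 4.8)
The plan is to deduce the Gaussian inequality from the compactly-supported case, Theorem \ref{LSItheorem}, by approximating $\g$ by a sequence of compactly-supported measures that satisfy strong hypercontractivity with the \emph{optimal} Janson constant $c=1$. The natural candidates come from the De Moivre--Laplace theorem: let $\mu_n$ be the law of $n^{-1/2}\sum_{j=1}^n\epsilon_j$, where $\epsilon_1,\dots,\epsilon_n$ are independent symmetric $\pm1$ variables, so that $\mu_n$ is the $n$-fold convolution of the rescaled Bernoulli measure $\tfrac12(\delta_{1/\sqrt n}+\delta_{-1/\sqrt n})$. Each $\mu_n$ is symmetric, supported on the compact interval $[-\sqrt n,\sqrt n]$, and $\mu_n\rightharpoonup\g$. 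By Proposition \ref{Bern} and Remark \ref{remark Bern} the single factor satisfies $\|f_r\|_{1/r^2}\le\|f\|_1$, and applying Theorem \ref{convol} with $q=1/r^2$ for each fixed $r\in(0,1]$ (legitimate since every factor is compactly supported) shows that $\mu_n$ satisfies $\|f_r\|_{1/r^2,\mu_n}\le\|f\|_{1,\mu_n}$ for all continuous \lsh\ functions. The power trick $f\mapsto f^p$ of Proposition \ref{properties} then upgrades this to the full strong hypercontractivity property with $c=1$ required in the hypothesis of Theorem \ref{LSItheorem} (in particular, applying it to $f^2$ gives the $p=2$ version used in that proof).

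Following the proof of Corollary \ref{LSItheoremFull}, I would first reduce to the strictly positive case by replacing $f$ with $f^\e=f+\e$. This $f^\e$ is again \lsh\ (Proposition \ref{properties}), of class ${\mathcal C}^1$, strictly positive, and retains the integrability hypotheses since $Ef^\e=Ef$; the final result is recovered by letting $\e\downarrow0$ at the very end exactly as in Corollary \ref{LSItheoremFull}. For fixed $n$ and $\e$, the function $\sqrt{f^\e}$ is \lsh\ (a power of an \lsh\ function is \lsh) and of class ${\mathcal C}^1$ since $f^\e>0$, so Theorem \ref{LSItheorem} applies to it against the compactly-supported measure $\mu_n$ with $c=1$. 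Passing from its $L^2$ form to the $L^1$ form exactly as in the transition from (\ref{LSIlshL2}) to (\ref{LSIlshL1}), using $Ef^\e=2\sqrt{f^\e}\,E\sqrt{f^\e}$, we obtain
\[ \int f^\e\log f^\e\,d\mu_n-\|f^\e\|_{1,\mu_n}\log\|f^\e\|_{1,\mu_n}\le\tfrac12\int Ef^\e\,d\mu_n \]
for every $n$.

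The remaining task, and the main obstacle, is to let $n\to\infty$. Weak convergence $\mu_n\rightharpoonup\g$ only controls integrals of bounded continuous functions, whereas the three integrands $f^\e$, $f^\e\log f^\e$, and $Ef^\e=x\,(f^\e)'(x)$ are typically unbounded and the supports $[-\sqrt n,\sqrt n]$ grow without bound. What is needed is uniform integrability of these three integrands against the family $\{\mu_n\}$, which under the hypotheses $f\in L^1(\g)\log L^1(\g)\cap L^1_E(\g)$ should yield $\int(\,\cdot\,)\,d\mu_n\to\int(\,\cdot\,)\,d\g$ in each case. Because each $\mu_n$ is purely atomic, supported on a lattice of spacing $2/\sqrt n$, mere weak convergence is too crude: one needs a quantitative, \emph{local} version of the De Moivre--Laplace theorem giving pointwise control of the atomic masses together with uniform Gaussian tail bounds. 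Establishing such a strengthened central limit theorem---the content of Section \ref{CLT}---is the technical heart of the argument. Once the three integrals are shown to converge, the inequality transfers to $\g$ for each $f^\e$, and the concluding limit $\e\downarrow0$ yields the theorem.
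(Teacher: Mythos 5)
Your proposal follows essentially the same route as the paper: approximate $\gamma$ by the convolved Bernoulli measures $\mu_n$, which satisfy strong hypercontractivity with $c=1$ by Proposition \ref{Bern} and Theorem \ref{convol}, apply Theorem \ref{LSItheorem} to get the LSI for each $\mu_n$ (in its $L^1$ form with constant $\tfrac12$), and pass to the limit using the strengthened De Moivre--Laplace results of Section \ref{CLT}. You correctly identify the uniform Gaussian tail control (Proposition \ref{tails}, used along the subsequence $N=4n^2$) as the technical heart, and your explicit $f+\e$ regularization is a reasonable way to make the $L^2\to L^1$ conversion rigorous where the paper is terser.
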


 Note that the method of the proof of Corollary \ref{LSItheoremFull} cannot be applied
because we do not know if the measures $\gamma_N$ have the strong hypercontractivity 
property with Gaussian constant $c=1$; by the  Theorem \ref{logconv} they have it
with $c=2$ and we would obtain the weaker inequality (\ref{LSIlshL2}).
Instead, we will use the Proposition \ref{Bern}, the Theorem \ref{convol}
and some   results about   strengthened versions  of the DeMoivre--Laplace Central Limit Theorem, proved in the following subsection.  This approach mirrors, to some extent, Gross's proof of the Gaussian log-Sobolev inequality in \cite{g4}.

\begin{rem}
For the log--subharmonic  functions $f(x)=e^{ax}, a>0$ there is equality
in (\ref{LSIGaussL2}) and (\ref{LSIGaussL1}). Thus
 the constant $c=1$ is optimal in (\ref{LSIGaussL2}) and  
the constant $\frac{1}{2}$ is optimal in  (\ref{LSIGaussL1}). 
\end{rem}

 \subsection{Strengthened DeMoivre--Laplace Central Limit Theorems }\label{CLT}
 
\begin{thm}\label{CLTepsilon}
  Let $X_n$ be independent, identically distributed Bernoulli random variables
with $P(X_k=0)=P(X_k=1)=\frac{1}{2}.$ Let 
$$
S_n=\frac{X_1+\ldots+X_n - \frac{n}{2}}{\frac{\sqrt{n}}{2}}
$$
and let $Y$ be an  $N(0,1)$ random variable. 
Then for every continuous  function $f$ integrable with respect to the normal $N(0,1)$ law
$\gamma$ and  such that   $|f(x)|\le e^{(\frac{1}{2}-\epsilon)x^2}$ for some $0<\epsilon< 1/2$, we have
$$
\lim_n \EE f(S_n) =\EE f(Y).
$$ 
 \end{thm}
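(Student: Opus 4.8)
The plan is to upgrade the weak convergence provided by the classical central limit theorem to convergence of the \emph{unbounded} functional $\EE f(S_n)$ by means of a uniform integrability argument. First I would record that, since $S_n\Rightarrow Y$ by the classical CLT and $f$ is continuous, the continuous mapping theorem gives $f(S_n)\Rightarrow f(Y)$; the integrability of $f$ against $\gamma$ guarantees $\EE|f(Y)|<\infty$, so the target quantity is finite. The standard fact I would invoke is that if a sequence of random variables converges in distribution and is uniformly integrable, then their expectations converge to the expectation of the limit. Thus the entire problem reduces to verifying that the family $\{f(S_n)\}_n$ is uniformly integrable.

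For uniform integrability I would use the sufficient criterion $\sup_n\EE|f(S_n)|^{1+\delta}<\infty$ for some $\delta>0$. The growth hypothesis $|f(x)|\le e^{(1/2-\epsilon)x^2}$ gives $|f(S_n)|^{1+\delta}\le e^{(1+\delta)(1/2-\epsilon)S_n^2}$, so it suffices to bound $\EE\,e^{cS_n^2}$ uniformly in $n$, where $c=(1+\delta)(1/2-\epsilon)$. Because $1/2-\epsilon<1/2$ strictly, I can choose $\delta>0$ small enough (any $0<\delta<\tfrac{2\epsilon}{1-2\epsilon}$) that $c<1/2$; this strictness is exactly what makes the argument run, and is the reason the hypothesis is stated with $1/2-\epsilon$ rather than $1/2$.

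The remaining, and most delicate, point is to obtain a bound on $\EE\,e^{cS_n^2}$ that does not deteriorate as $n\to\infty$. Writing $S_n=n^{-1/2}\sum_{k=1}^n\xi_k$ with $\xi_k=2X_k-1$ i.i.d.\ Rademacher, the elementary inequality $\cosh u\le e^{u^2/2}$ yields the $n$-free sub-Gaussian bound $\EE\,e^{tS_n}=(\cosh(t/\sqrt n))^n\le e^{t^2/2}$ for every real $t$. I would then linearise the square in the exponent via the Gaussian identity $e^{cs^2}=\EE_Z\,e^{\sqrt{2c}\,sZ}$ for $Z\sim N(0,1)$ independent of the $X_k$, and interchange expectations (legitimate by Tonelli, all integrands being nonnegative) to obtain
\[ \EE\,e^{cS_n^2}=\EE_Z\,\EE\,e^{\sqrt{2c}\,ZS_n}\le\EE_Z\,e^{cZ^2}=(1-2c)^{-1/2}, \]
which is finite precisely because $c<1/2$ and is independent of $n$. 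This completes the verification of uniform integrability and hence the proof.

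The main obstacle is this last uniform estimate: the growth threshold $1/2$ coincides with the coefficient appearing in the Gaussian weight $e^{-x^2/2}$, so the argument is tight and breaks down at the endpoint $c=1/2$, where $\EE_Z\,e^{cZ^2}$ diverges. Everything hinges on exploiting the strictness $\epsilon>0$ to buy a small power $\delta$ while keeping $c$ strictly below $1/2$, together with the fact that the Rademacher sub-Gaussian bound $e^{t^2/2}$ is uniform in $n$. I would remark that a more computational route through explicit DeMoivre--Laplace local limit estimates is possible but considerably messier; the uniform-integrability packaging isolates the one quantitative inequality that really matters.
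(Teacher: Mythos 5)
Your proof is correct, and while it shares the paper's underlying strategy -- exploit the strict gap $\epsilon>0$ together with the uniform sub-Gaussianity of $S_n$ to upgrade the CLT to unbounded test functions -- the implementation is genuinely different. The paper works with the explicit majorant $F_\epsilon(x)=e^{(1/2-\epsilon)x^2}$: it invokes Hoeffding's inequality $P(S_n>u)\le e^{-u^2/2}$, uses the layer-cake formula and a change of variables plus dominated convergence to prove $\EE F_\epsilon(S_n)\to\EE F_\epsilon(Y)$, and then runs a truncation argument (convergence of the majorant's expectations combined with the CLT on $\{|S_n|\le N\}$ forces $\EE(F_\epsilon(S_n)\mathbf{1}_{\{|S_n|>N\}})$ to be small uniformly in $n$) -- in effect a generalized dominated convergence argument for weak convergence, with $F_\epsilon$ as the convergent dominating sequence. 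You instead certify uniform integrability by the $L^{1+\delta}$-boundedness criterion, proving the clean uniform estimate $\EE\,e^{cS_n^2}\le(1-2c)^{-1/2}$ for $c<1/2$ by combining the dimension-free bound $\EE\,e^{tS_n}=(\cosh(t/\sqrt n))^n\le e^{t^2/2}$ with the Gaussian linearization $e^{cs^2}=\EE_Z e^{\sqrt{2c}\,sZ}$ and Tonelli, and then cite the standard theorem that weak convergence plus uniform integrability yields convergence of expectations. The two quantitative inputs are essentially the same estimate in different clothing (Hoeffding for Rademacher sums is proved from exactly the $\cosh$ bound via Chernoff), but your packaging is tidier: it isolates a single reusable inequality, avoids the layer-cake bookkeeping, and the uniform $L^{1+\delta}$ bound is strictly stronger information than tail smallness. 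What the paper's route buys is self-containedness -- it never leaves the elementary toolkit of tail bounds, Fubini, and dominated convergence, whereas your argument leans on the UI-plus-weak-convergence theorem (typically justified via Skorokhod representation and Vitali). Your observation that both arguments are tight and fail at $\epsilon=0$ is accurate.
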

\begin{proof}
Let $Y_k=2X_k-1$.  We have $
S_n=\sum_{k=1}^n\frac{1}{\sqrt{n}} Y_k.
$
The independent  random variables $Y_k$  take the values $1$ or $-1$ with
probability $1/2$ and the  Hoeffding inequality (see e.g.\ \cite{dud}, Prop.\ 1.3.5) implies that
 \begin{equation}\label{hoeff}
P(S_n>u)\le e^{-u^2/2}.
 \end{equation}
Let $0<\epsilon<\frac{1}{2}$ and $F_\epsilon(x)=\exp\{(\frac{1}{2}-\epsilon) x^2\}$.
It follows from (\ref{hoeff}) that
\begin{equation}\label{Fepsilon}
 \EE F_\epsilon(S_n)\ \  \ra \ \  \EE F_\epsilon(Y).
\end{equation}
Indeed, since $P(F_\epsilon(S_n)>x)=1$ for $x\le 1$,
\[ \EE F_\epsilon(S_n) =\int_0^\infty P(F_\epsilon(S_n)>x) dx= 1 + \int_1^\infty P(F_\epsilon(S_n)>x) dx. \]
In the last  integral, change the variables $F_\epsilon(t)=x$. We obtain
$$
\int_1^\infty P(F_\epsilon(S_n)>x) dx= \int_0^\infty F_\epsilon'(t) P( S_n>t)dt=
(1-2\epsilon)\int_0^\infty t F_\epsilon(t) P( S_n>t)dt.
$$
By the Central Limit Theorem we have $ \lim_{n\ra \infty} P( S_n>t)= P(Y>t)$.
Using (\ref{hoeff}) and the Dominated Convergence Theorem
we see that 
$$\int_1^\infty P(F_\epsilon(S_n)>x) dx\ \ \ra \ \  \int_1^\infty P(F_\epsilon(Y)>x) dx
$$ 
and we conclude that (\ref{Fepsilon}) is true.

\medskip

Now, let $f$ be continuous and  $0\le f \le F_\epsilon$ for a fixed $\epsilon$. Take $N>0$. Decompose $\EE(F_\epsilon(S_n))=\EE(F_\epsilon(S_n){\bf 1}_{\{|S_n|\le N\}})+ 
\EE(F_\epsilon(S_n){\bf 1}_{\{|S_n| > N\}})$. The Central Limit Theorem  implies that
$$
\EE(F_\epsilon(S_n){\bf 1}_{\{|S_n|\le N\}})\ \ \ra \ \ \EE(F_\epsilon(Y){\bf 1}_{\{|Y|\le N\}}).
$$
Thus (\ref{Fepsilon}) and the integrability of $F_\epsilon$ with respect to the Gaussian law of $Y$
imply that 
$$
\forall \delta>0 \ \exists N>0 \ \forall n\ \ \ \EE(F_\epsilon(S_n){\bf 1}_{\{|S_n|> N\}}) <\delta.
$$
As $0\le f\le F_\epsilon$, we have
$$
\forall \delta>0 \ \exists N>0 \ \forall n\ \ \ \EE( f(S_n){\bf 1}_{\{|S_n|> N\}}) <\delta.
$$
By  the Central Limit Theorem, for every $N>0$ we have  
$\EE( f(S_n){\bf 1}_{\{|S_n|\le N\}}) \to \EE(f(Y) {\bf 1}_{\{|Y|\le N\}})
$
and it follows that $\EE( f(S_n)) \to \EE( f(Y))$.
\end{proof}

\noindent In the sequel we denote by $\mu_n$ the law of $S_n$.  Denote $\Psi(x)=P(Y>x)$ the tail function of the Gaussian distribution $\gamma$ and $\Psi_n(x)=P(S_n>x)$ the tails of the random variables $S_n$.

\begin{prop}\label{g'Psi}
 If $g\in L^1(\gamma)$ is in ${\mathcal C}^1([0,\infty))$  and
  $g$ is {\bf strictly increasing} on $[x_0,\infty)$ for an $x_0\ge 0$\\
then
\begin{equation}\label{g'gauss}
 \int_{x_0}^\infty g\, d\gamma =  g(x_0)\Psi(x_0)  +\int_{x_0}^\infty g' \Psi dx. 
  \end{equation}
In particular, $g' \Psi \in  L^1(x_0,\infty)$. Equation \ref{g'gauss} is also true with measures $\mu_n$ in the place of the Gaussian law $\gamma$:
\begin{equation}\label{g'binom}
\int_{x_0}^\infty g\, d\mu_n = g(x_0)\Psi_n(x_0)   +\int_{x_0}^\infty g' \Psi_n dx,\ \ \   n\in\NN.
\end{equation}
\end{prop}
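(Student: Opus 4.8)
The plan is to prove both identities \eqref{g'gauss} and \eqref{g'binom} at once, as instances of a single layer-cake (Fubini--Tonelli) computation: they are the integration-by-parts formula $\int g\,d\nu = g(x_0)\,\nu((x_0,\infty)) + \int_{x_0}^\infty g'(t)\,\nu((t,\infty))\,dt$ for a measure $\nu$ with tail $t\mapsto\nu((t,\infty))$, applied with $\nu=\gamma$ (tail $\Psi$) and $\nu=\mu_n$ (tail $\Psi_n$). The starting point is that, since $g\in\mathcal{C}^1([0,\infty))$ and $x_0\ge0$, the fundamental theorem of calculus gives $g(x)=g(x_0)+\int_{x_0}^x g'(t)\,dt$ for every $x\ge x_0$. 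Crucially, because $g$ is (strictly) increasing on $[x_0,\infty)$ we have $g'\ge0$ there, and this nonnegativity is exactly what lets me invoke Tonelli's theorem with no a priori integrability hypothesis.

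First I would substitute this expression into $\int_{x_0}^\infty g\,d\gamma$, splitting it as $\int_{x_0}^\infty g\,d\gamma = g(x_0)\,\gamma((x_0,\infty)) + \int_{x_0}^\infty\Big(\int_{x_0}^x g'(t)\,dt\Big)\,d\gamma(x)$. The first term equals $g(x_0)\Psi(x_0)$ by definition of $\Psi$. For the second, the region of integration is $\{(t,x):x_0\le t\le x\}$, so Tonelli's theorem (applicable since $g'\ge0$) exchanges the order: for fixed $t\ge x_0$ the variable $x$ ranges over $[t,\infty)$ and $\int_t^\infty d\gamma = \Psi(t)$, which yields $\int_{x_0}^\infty g'(t)\Psi(t)\,dt$ and hence \eqref{g'gauss}. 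The claim $g'\Psi\in L^1(x_0,\infty)$ then comes for free: the left-hand side is finite because $g\in L^1(\gamma)$ and the boundary term $g(x_0)\Psi(x_0)$ is finite, so the nonnegative integral $\int_{x_0}^\infty g'\Psi\,dx$ must be finite as well.

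The identity \eqref{g'binom} for $\mu_n$ follows from the identical computation with $\gamma,\Psi$ replaced by $\mu_n,\Psi_n$; here finiteness of $\int_{x_0}^\infty g\,d\mu_n$ is automatic since $\mu_n$ is compactly supported and $g$ continuous. The only point requiring care is that $\mu_n$ is a discrete lattice measure, so its atoms must be tracked in two places. The Tonelli exchange produces the tail $t\mapsto\mu_n([t,\infty))=P(S_n\ge t)$, which agrees with $\Psi_n(t)=P(S_n>t)$ off the countable atom set, hence Lebesgue-a.e., so the $dt$-integral is unaffected and equals $\int_{x_0}^\infty g'\Psi_n\,dx$. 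For the boundary term one must read $\Psi_n(x_0)=\mu_n((x_0,\infty))$, i.e.\ interpret the left-hand integral over the open half-line; a closed-interval reading would instead contribute the extra atomic mass $g(x_0)P(S_n=x_0)$. This atom bookkeeping is the only real (and minor) obstacle, and it is vacuous in the Gaussian case since $\gamma$ has no atoms. I would also note that the statement is just the Fubini form of integration by parts, whose boundary term at $+\infty$ vanishes because monotonicity forces $0\le g(x)\Psi(x)\le\int_x^\infty g\,d\gamma\to0$ for large $x$; the Tonelli route, however, avoids invoking this limit altogether.
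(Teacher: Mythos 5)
Your proof is correct, but it takes a different route from the paper's. You write $g(x)=g(x_0)+\int_{x_0}^x g'(t)\,dt$ by the fundamental theorem of calculus and then apply Tonelli on the region $\{x_0\le t\le x\}$, using $g'\ge 0$ to justify the exchange with no a priori integrability, which makes the $L^1$ claim for $g'\Psi$ fall out from finiteness of the other two terms. The paper instead conditions $\gamma$ on $[x_0,\infty)$ to get a random variable $Y^{x_0}$, writes $\EE\,g(Y^{x_0})=\int_0^\infty P(g(Y^{x_0})>x)\,dx$ by the layer-cake formula, splits at $x=g(x_0)$, and performs the change of variables $u=g^{-1}(x)$ on the upper piece, using strict monotonicity to make $g$ a ${\mathcal C}^1$ bijection of $[x_0,\infty)$ onto $[g(x_0),G)$. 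The two computations are dual forms of the same integration by parts, but yours is somewhat more robust: it needs only $g'\ge 0$ rather than an invertible $g$, it does not implicitly require $g(x_0)\ge 0$ (the paper's layer-cake split $\int_0^{g(x_0)}$ tacitly assumes this, though one can always add a constant), and it handles the discrete measures $\mu_n$ transparently --- your bookkeeping of the atoms, distinguishing $\mu_n([t,\infty))$ from $\Psi_n(t)=\mu_n((t,\infty))$ and fixing the open-half-line convention at $x_0$, addresses a point the paper passes over with ``the proof for $\mu_n$ is analogous.'' What the paper's route buys is a purely probabilistic phrasing in terms of tails of $g(Y^{x_0})$, consistent with the surrounding CLT arguments, but nothing essential is lost in your version.
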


\begin{proof} 
In order to prove (\ref{g'gauss}), we define $Y^{x_0}$ as a bounded and positive random variable with law
$\gamma|_{ [x_0,\infty)}/\gamma([x_0,\infty) )$.
By Fubini's theorem we write
 \begin{eqnarray*}
\displaystyle{ \frac{1}{\gamma([x_0,\infty))}\int_{x_0}^\infty g\, d\gamma} &= \displaystyle{\EE g(Y^{x_0})= \int_0^\infty P(g(Y^{x_0})>x)\,dx }\\
&= \displaystyle{\left(\int_0^{g(x_0)}+\int_{g(x_0)}^\infty \right) P(g(Y^{x_0})>x)dx }\\
&= \displaystyle{g(x_0) +\int_{g(x_0)}^\infty  P(g(Y^{x_0})>x)dx. }
   \end{eqnarray*}
 The function
 $g$ is a ${\mathcal C}^1$ bijection of $[x_0,\infty)$ onto $[g(x_0),G)$,
where $G=\lim_{x\ra\infty}g(x)$.
 In the last  integral we change the variables $u=g^{-1}(x)$ 
and we obtain
$$
\int_{g(x_0)}^\infty  P(g(Y^{x_0})>x)dx=\int_{g(x_0)}^G P(g(Y^{x_0})>x)dx=
\int_{x_0} ^\infty P(Y^{x_0}>u)g'(u)du
$$
and (\ref{g'gauss}) follows.  The proof for the symmetric binomial measures
$\mu_n$ is analogous.
\end{proof}

\begin{thm}\label{betterCLT}
If $g\in L^1(\gamma)$ is in ${\mathcal C}^1([0,\infty))$ and $g$
is {\bf strictly increasing} 
  on $[x_0,\infty)$ for an $x_0\ge 0$, 
 then the DeMoivre--Laplace CLT holds for $g$ and the subsequence $N=4n^2$:
\begin{eqnarray*}
\int_0^\infty g\,  d\mu_N &\ra& \int_0^\infty g\,   d\gamma,\ \ \ \ \ N=4n^2 \ra\infty.
\end{eqnarray*}

\end{thm}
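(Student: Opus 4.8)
The plan is to reduce the convergence of the full integral to the convergence of Gaussian-type tail integrals via the representation in Proposition~\ref{g'Psi}, and then to control those tails by a single uniform sub-Gaussian domination so that one may pass to the limit by dominated convergence. The reason Theorem~\ref{CLTepsilon} does not apply directly is exactly that the hypothesis $g\in L^1(\gamma)$ permits $g$ to grow nearly as fast as $e^{x^2/2}$, violating the bound $|f(x)|\le e^{(1/2-\e)x^2}$ required there. Throughout I write $\mu_N$ for the law of $S_N$ and recall that along the subsequence $N=4n^2$ the variable $S_N$ is supported on the lattice $\{-2n,-2n+\tfrac1n,\dots,2n\}$ with spacing $1/n$; in particular $0$ and the extreme values $\pm 2n=\pm\sqrt N$ are lattice points, which will keep the bookkeeping below clean.

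First I would split $\int_0^\infty g\,d\mu_N=\int_0^{x_0}g\,d\mu_N+\int_{x_0}^\infty g\,d\mu_N$ (if $x_0=0$ the first piece is absent). On $[0,x_0]$ the function $g$ is bounded and continuous off the $\gamma$-null set $\{0,x_0\}$, so, since the classical CLT gives $\mu_N\rightharpoonup\gamma$, the portmanteau theorem yields $\int_0^{x_0}g\,d\mu_N\to\int_0^{x_0}g\,d\gamma$. For the tail I would invoke Proposition~\ref{g'Psi}, which (using $g'\ge 0$ on $[x_0,\infty)$, guaranteed by strict monotonicity) gives the exact identities
\[
\int_{x_0}^\infty g\,d\mu_N=g(x_0)\Psi_N(x_0)+\int_{x_0}^\infty g'\Psi_N\,dx,\qquad \int_{x_0}^\infty g\,d\gamma=g(x_0)\Psi(x_0)+\int_{x_0}^\infty g'\Psi\,dx,
\]
and records that $g'\Psi\in L^1(x_0,\infty)$. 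Since $\Psi$ is continuous, the classical CLT gives $\Psi_N(x)\to\Psi(x)$ for every $x$, so the boundary terms converge; everything then comes down to showing $\int_{x_0}^\infty g'\Psi_N\,dx\to\int_{x_0}^\infty g'\Psi\,dx$.

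The heart of the argument, and the main obstacle, is that $g'$ can be nearly as large as $e^{x^2/2}$, so Hoeffding's bound (\ref{hoeff}), $\Psi_N(x)\le e^{-x^2/2}$, is too weak by a factor of order $x$ to be integrated against $g'$ (recall $\Psi(x)\sim \tfrac{1}{x\sqrt{2\pi}}e^{-x^2/2}$). What is needed is a genuinely Gaussian comparison $\Psi_N(x)\le C\,\Psi(x)$, valid for all $x\ge x_0$ uniformly in $N$. I would obtain this from a uniform local limit estimate for the binomial point masses, namely $\mu_N(\{x_j\})\le \frac{C}{\sqrt N}\,e^{-x_j^2/2}$ for every lattice point $x_j$, proved by Stirling's formula; summing over the lattice points $x_j>x$ and comparing the resulting Riemann sum (spacing $2/\sqrt N$, summand decreasing on $[x,\infty)$) to $\int_x^\infty e^{-t^2/2}\,dt$ gives $\Psi_N(x)\le C\,\Psi(x)$. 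This is precisely where the subsequence $N=4n^2$ is convenient: the clean spacing $1/n$ and the integer endpoints $\pm 2n$ make both the local bound and the Riemann-sum comparison transparent, and the vanishing of $\Psi_N$ beyond $\sqrt N$ takes care of the extreme large-deviation range.

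With the domination $g'\Psi_N\le C\,g'\Psi\in L^1(x_0,\infty)$ and the pointwise convergence $\Psi_N\to\Psi$, the dominated convergence theorem yields $\int_{x_0}^\infty g'\Psi_N\,dx\to\int_{x_0}^\infty g'\Psi\,dx$, which combined with the convergence of the boundary terms and of the $[0,x_0]$ piece completes the proof. Beyond the Stirling estimate itself, I expect the only delicate point to be assembling the comparison $\Psi_N\le C\Psi$ uniformly across the bulk, moderate, and large-deviation regimes; the local estimate handles the bulk and moderate ranges, while Hoeffding~(\ref{hoeff}) together with the compact support of $\mu_N$ controls the far tail.
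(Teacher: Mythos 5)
Your argument for the theorem itself is exactly the paper's: split the integral at $x_0$, handle $[0,x_0]$ by the ordinary CLT, rewrite the tail integrals via the identities of Proposition~\ref{g'Psi}, note the pointwise convergence $\Psi_N(x)\to\Psi(x)$, and pass to the limit in $\int_{x_0}^\infty g'\Psi_N\,dx$ by dominated convergence once one has the uniform Gaussian tail comparison $\Psi_N(x)\le C\,\Psi(x)$ for $x\ge x_0$ and $N=4n^2$. That comparison is precisely the paper's Proposition~\ref{tails}, which is the real content and which the paper's proof of the theorem simply cites.

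Where you genuinely diverge is in how you propose to establish that tail comparison. The paper proves Proposition~\ref{tails} by expressing the binomial tail $1-B(k,N,\tfrac12)$ through the incomplete beta integral $N\binom{N-1}{k}\int_0^{1/2}t^k(1-t)^{N-k-1}\,dt$, estimating that integral by the Laplace method, and then controlling the remaining central binomial coefficient by Stirling and a Taylor expansion of the resulting logarithms. You instead propose a local limit bound $\mu_N(\{x_j\})\le \tfrac{C}{\sqrt N}e^{-x_j^2/2}$ on the individual point masses followed by a Riemann-sum comparison of $\sum_{x_j>x}e^{-x_j^2/2}$ with $\int_x^\infty e^{-t^2/2}\,dt$, using monotonicity of the summand and the bounded support $|S_N|\le\sqrt N$ to absorb the shift by one lattice spacing. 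This route is sound and arguably more transparent than the paper's; the one point you correctly flag as delicate is that the local bound must hold uniformly all the way out to $k$ of order $N$ (where the naive quadratic approximation to the binomial exponent degrades), which is where the paper's more laborious expansion does its work. You correctly identify why Theorem~\ref{CLTepsilon} and Hoeffding alone are insufficient (the missing factor of $x$ against a $g'$ that may grow like $e^{x^2/2}/x^{1+\delta}$), so the logic of the reduction is complete; only the proof of the local estimate would need to be written out with the same care the paper devotes to Proposition~\ref{tails}.
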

\begin{proof}
By the Central Limit Theorem $\int_0^{x_0} g\,  d\mu_N \ra \int_0^{x_0} g\, d\gamma$
and $\Psi_n(x)\ra \Psi(x)$, $n\ra\infty$.
 In order to establish the convergence of integrals on $[x_0,\infty)$, we begin with the formula (\ref{g'binom}). The convergence of the term
$\int_{x_0}^\infty g' \Psi_N\, dx$ to $\int_{x_0}^\infty g' \Psi\, dx$
follows by the Dominated Convergence Theorem using Proposition \ref{tails} and the integrability of $g' \Psi$ with respect to Lebesgue measure on $[x_0,\infty)$. An application of (\ref{g'gauss}) ends the proof.
\end{proof}

 \begin{prop}\label{tails}
 Let $x_0>0$. There exists $C>0$ such that for all $x\ge x_0$ and $ N=4n^2 $
$$
\Psi_N(x)= P(S_N>x)\le C\,P(Y>x)=C \Psi(x).
$$
\end{prop}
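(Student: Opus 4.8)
The plan is to compare the binomial tail $\Psi_N$ with the Gaussian tail $\Psi$ \emph{pointwise}, passing through a \emph{local} limit estimate rather than through the global Hoeffding bound $\Psi_N(x)\le e^{-x^2/2}$ of \eqref{hoeff}. Hoeffding alone cannot work here, because $e^{-x^2/2}/\Psi(x)$ grows like $x$, so it overshoots the desired bound by a polynomial factor. The role of the special subsequence $N=4n^2$ is precisely to make the local comparison clean: then $\sqrt N=2n$, the variable $S_N$ takes values on the lattice $\frac1n\mathbb{Z}$ with spacing $\delta=2/\sqrt N=1/n$, and a Riemann-sum comparison of $\sum P(S_N=s)$ with $\int e^{-u^2/2}\,du$ reproduces exactly the normalizing constant of $\Psi$.

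The technical heart, and the step I expect to be the main obstacle, is a \emph{uniform local upper bound}: there is a universal constant $C_1$ such that for every admissible lattice point $s$ and every $N$,
\[ P(S_N=s)\le \frac{C_1}{\sqrt N}\,e^{-s^2/2}. \]
Writing $P(S_N=s)=\binom{N}{j}2^{-N}$ with $p=j/N$ and $2p-1=s/\sqrt N$, Stirling's formula gives $\binom{N}{j}2^{-N}\le \frac{C}{\sqrt{Np(1-p)}}\,e^{-N D(p)}$, where $D(p)=p\log(2p)+(1-p)\log\!\big(2(1-p)\big)$. The key elementary inequality is $D(p)\ge \tfrac12(2p-1)^2$, which follows from $f(u)=(1+u)\log(1+u)+(1-u)\log(1-u)$ having $f(0)=f'(0)=0$ and $f''(u)=\frac{2}{1-u^2}\ge 2$, hence $f(u)\ge u^2$ with $u=2p-1$; this turns $e^{-ND(p)}$ into $e^{-s^2/2}$. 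One then controls the prefactor $\frac{1}{\sqrt{p(1-p)}}$ by splitting: on the bulk $\tfrac14\le p\le\tfrac34$ it is bounded by $4/\sqrt3$, while on the large-deviation range $|2p-1|>\tfrac12$ the gap $D(p)-\tfrac12(2p-1)^2\ge g_0>0$ produces a factor $e^{-Ng_0}$ dominating the at-most-$\sqrt N$ growth of $\frac{1}{\sqrt{p(1-p)}}$; the two extreme terms $j=0,N$ (each of value $2^{-N}=e^{-N\log 2}\le \tfrac{C_1}{\sqrt N}e^{-N/2}$) are checked directly.

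With the local bound in hand the assembly is routine. Since $\phi(u)=e^{-u^2/2}$ is decreasing on $[x_0,\infty)$ (here $x_0>0$ is essential), summing over the lattice points $s>x$ and comparing with the integral gives, using $\frac{1}{\sqrt N}=\delta/2$,
\begin{align*}
\Psi_N(x)=\sum_{s>x}P(S_N=s)
&\le \frac{C_1}{2}\,\delta\sum_{s>x}\phi(s)
\le \frac{C_1}{2}\Big[\delta\,\phi(x)+\int_x^\infty\phi(u)\,du\Big] \\
&=\frac{C_1}{2}\Big[\tfrac{2}{\sqrt N}e^{-x^2/2}+\sqrt{2\pi}\,\Psi(x)\Big].
\end{align*}
For the boundary term I use that $|S_N|\le\sqrt N$ almost surely, so $\Psi_N(x)=0$ once $x\ge\sqrt N$ and hence in the nontrivial range $x/\sqrt N<1$; combined with the Mills-ratio lower bound $\Psi(x)\ge \frac{1}{\sqrt{2\pi}}\,\frac{x}{1+x^2}\,e^{-x^2/2}$ this yields $\frac{2}{\sqrt N}e^{-x^2/2}\le \frac2x e^{-x^2/2}\le 2\sqrt{2\pi}\,(1+x_0^{-2})\,\Psi(x)$ for $x\ge x_0$. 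Collecting the constants gives $\Psi_N(x)\le C\,\Psi(x)$ with $C=\tfrac{C_1\sqrt{2\pi}}{2}\,(3+2x_0^{-2})$, uniform in $N=4n^2$ and $x\ge x_0$, as required.
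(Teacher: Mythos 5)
Your proof is correct, and it takes a genuinely different route from the paper's. The paper works directly with the tail: it writes $1-B(k,n,\tfrac12)$ via the incomplete-beta identity $1-B(k,n,p)=n\binom{n-1}{k}\int_0^p t^k(1-t)^{n-k-1}dt$, estimates the resulting integral by Laplace's method, and then controls the remaining binomial coefficient $\binom{4n^2}{2n^2+m}4^{-2n^2}$ by Stirling's formula followed by a Taylor-expansion bookkeeping of $-2n^2\ln(1-\tfrac{m^2}{4n^4})+(m+\tfrac12)\ln\frac{1-m/2n^2}{1+m/2n^2}$; the restriction to $N=4n^2$ is there to make these computations tractable. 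You instead isolate a uniform local upper bound $P(S_N=s)\le C_1N^{-1/2}e^{-s^2/2}$, obtained from Stirling plus the entropy inequality $D(p)\ge\tfrac12(2p-1)^2$ (with the bulk/large-deviation split to tame the prefactor), and then recover the tail bound by a monotone Riemann-sum comparison together with the Mills-ratio lower bound for $\Psi$. All the steps check out: the Pinsker-type inequality, the treatment of the endpoints $j=0,N$, the summation-versus-integral comparison (valid since all lattice points exceeding $x\ge x_0>0$ lie where $e^{-u^2/2}$ is decreasing), and the disposal of the boundary term using $\Psi_N(x)=0$ for $x\ge\sqrt N$. One remark: your framing of the role of $N=4n^2$ is not quite right --- your argument never uses that $\sqrt N$ is an integer, only that the lattice spacing is $2/\sqrt N$, so your proof in fact establishes the estimate for \emph{all} $N$, which settles the conjecture stated in the remark following the proposition. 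What your approach buys is modularity (the hard analysis is confined to a clean, standard local-CLT-type bound) and the stronger, unrestricted conclusion; what the paper's approach buys is a bound phrased directly on the tail without passing through the individual point masses, at the cost of the Laplace-method asymptotics, the subsequence restriction, and the admittedly tedious expansion at the end.
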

\begin{rem}
Proposition \ref{tails} strengthens the  Hoeffding inequality (\ref{hoeff}).
 For our application, it is sufficient for us to prove it for a subsequence of $n$ (here $4n^2$),
but we conjecture that it is true for all $n$.
\end{rem}
\begin{proof}
 Let us denote $b(k,n,p)={n \choose k} p^k(1-p)^{n-k}$ and put $B(k,n,p)=\sum_{\nu=0}^k b(\nu,n,k).$  It is  a standard exercise (cf. \cite{f} Ex.VI.45(10.9)) to show that 
\[ 1-B(k,n,p)=n{{n-1} \choose k} \int_0^p t^k(1-t)^{n-k-1}\, dt. \]
We will show that if $p=\frac12$ and $k=\lfloor\frac{n}{2}+\frac{x\sqrt{n}}{2}\rfloor$ 
then there exists a constant $C$ such that for $x>x_0$ there holds
$1-B\left(k,n,\frac12\right) \ < \ C \Psi(x) $.
By the well-known estimate  $\Psi(x) \sim \frac1x e^{-x^2/2}$ (see e.g. \cite{f}VII, Lemma 2),
it is enough to show that for $x>x_0$
$$
1-B\left(k,n,\frac12\right) \ < \ \frac{C}x e^{-\frac{x^2}{2}}.$$
In order to simplify the left--hand side of the  last  inequality we  write
$$
\frac{1-B(k,n,\frac12)}{b(k,n,\frac12)}= 
\frac{n {{n-1} \choose k} \int_0^{1/2} t^k(1-t)^{n-k-1}\, dt}
{{{n} \choose k} \left(\frac{1}{2}\right)^n}=
(n-k)2^n\int_0^{\frac12} t^k (1-t)^{n-k+1} \, dt,$$
so that it is enough to show that 
\begin{equation}\label{raz}
b\left(k,n,\frac12\right) (n-k)2^n\int_0^{\frac12} t^k (1-t)^{n-k-1} \, dt \le
 \frac{C}{x}  e^{-\frac{x^2}{2}}.
 \end{equation}

In order to further simplify the computations, from now on we take a subsequence $N=4n^2$ instead of $n$, which gives $k=2n^2+ \lfloor xn \rfloor$.
For such $k$, Inequality \ref{raz} reads as
 \begin{equation}\label{dwa}
{4n^2 \choose 2n^2+\lfloor x n\rfloor} 
(2n^2-\lfloor x n\rfloor)
\int_0^{\frac12} 
t^{2n^2+\left\lfloor x n\right\rfloor} 
(1-t)^{2n^2- \lfloor x n \rfloor -1}
 \, dt
\le 
 \frac{C}x  e^{-\frac{x^2}{2}}.
\end{equation}
First we estimate the integral. For $0\le t \le \frac12$ there holds 
$\left(\rule{0cm}{3,5mm}t(1-t)\right)^{2n^2}<\left(\frac{1}{4}\right)^{2n^2}$.
In order to estimate the integral $\int_0^{\frac12} \left(\frac{t}{1-t}\right)^{\lfloor xn\rfloor}\, dt$
we use the Laplace method
for estimating integrals of type $\int_a^b \exp\left(\lambda S(x)\right)\, dx$, when $\lambda \to \infty$,
see e.g. \cite{O}.
We have to estimate
$$
\int_0^{\frac12} \left(\frac{t}{1-t}\right)^{\lfloor xn\rfloor}\, dt
=
\int_0^{\frac12} e^{{\lfloor xn\rfloor}\ln \frac{t}{1-t}}\, dt,$$
hence we take
$
\lambda=\lfloor xn\rfloor$ and $ S(t)= \ln \frac{t}{1-t}.$
If $S(x)$ is $\mathcal{C}^{\infty}$, $\max_{x\in [a,b]} S(x)$ is attained only at $b$ 
and $S'(b)\neq 0$ --- all these conditions are fulfilled in our case ---
then, by Laplace method, for  $\lambda \to \infty$, there holds
$
\int_a^b \exp\left(\lambda S(x)\right)\, dx \ \sim \ \frac{1}{\lambda S'(b)},$
which in our case gives for some  constant $C_1$ and $x>x_0$
$$
\int_{0}^{\frac12} 
\left(\frac{t}{1-t}\right)^{\lfloor x n \rfloor} \, dt
\le   
\frac{C_1}{x n}.$$
Finally,  since $(1-t)^{-1}\le 2$ on this interval, we get
$$
\int_0^{\frac12} 
t^{2n^2+\left\lfloor x n\right\rfloor} 
(1-t)^{2n^2- \lfloor x n \rfloor -1}
 \, dt\le \frac{2C_1}{xn}\left(\frac14\right)^{2n^2}.$$
Substituting  this estimate into (\ref{raz}), we see that it is enough to prove the following inequality:
there exists a constant $C_2$ such that for all $n\in \NN$ and all $x\in [x_0,\ 2n]$  there  holds
\begin{equation*}
\frac{2n^2-\lfloor x n\rfloor}{n}\,
{4n^2 \choose 2n^2+\lfloor x n\rfloor} 
\left(\frac14\right)^{2n^2}
\le 
C_2  e^{-\frac{x^2}{2}}.
\end{equation*}
If $x\in [x_0,\ 2n]$, then 
$m=\lfloor xn \rfloor  \in  \left[\rule{0cm}{3,8mm} \lfloor x_0 n \rfloor,\ 2n^2\right]  $, hence it is enough to show 
that for all $m=1,2,...,2n^2$ there holds
$$
\frac{2n^2- m}{n}\,
{4n^2 \choose 2n^2+ m } 
\left(\frac14\right)^{2n^2}
\le 
C_2  e^{-\frac{x^2}{2}}.$$
But,  if $m=\lfloor xn\rfloor$, then 
$
xn-1 < m \le xn, \ \  \mbox{hence}  \ \ 
x-\frac1n < \frac{m}{n} \le x \ \ \ \mbox{and} \ \
x <\frac{m+1}{n},$
which implies 
$ e^{-\frac{(m+1)^2}{2n^2}}< e^{-\frac{x^2}{2}}.$
Taking this into account, we see that
it is enough to prove that for all $n$ and  $m=1,2,...,2n^2$
\begin{equation}\label{four}
\frac{2n^2- m}{n}\,
{4n^2 \choose 2n^2+ m } 
\left(\frac14\right)^{2n^2}
\le 
C_2  e^{-\frac{(m+1)^2}{2n^2}}. 
\end{equation}

We estimate from the above the left-hand side of $(\ref{four})$, using the Stirling formula
\[ N!=N^N\,e^{-N}\sqrt{2\pi N} \,\exp\{\theta_N\}, \]
where $\theta_N\in (0,1)$ and $N\in\NN$. We obtain
 \begin{eqnarray*}
\frac{2n^2- m}{n}\,
{4n^2 \choose 2n^2+ m } 
\left(\frac14\right)^{2n^2}&=&
\frac{(2n^2- m)}{n}\,
\frac{(4n^2)!}{(2n^2+m )!(2n^2-m )!}
\left(\frac14\right)^{2n^2}\\
&\le&
\frac{2e}{\sqrt{2\pi}}
\left(\frac{2n^2- m}{2n^2+m}\right)^{\frac12}
\left(\frac{4n^4}{4n^4-m^2}\right)^{2n^2} 
\left(\frac{2n^2-m}{2n^2+m}\right)^m.
 \end{eqnarray*}

 We see that (\ref{four}) will follow from an estimate
$$
\left(\frac{4n^4}{4n^4-m^2}\right)^{2n^2}
\left(\frac{2n^2-m}{2n^2+m}\right)^{m+\frac{1}{2}}
\le
C_3  e^{-\frac{(m+1)^2}{2n^2}}$$
for some $C_3$ and $m=1,2,...,2n^2$, $n\in \NN$.
 We write

\[ \begin{aligned}
\left(\frac{4n^4}{4n^4-m^2}\right)^{2n^2} 
\left(\frac{2n^2-m}{2n^2+m}\right)^{m+\frac{1}{2}}
&=
\left(1-\frac{m^2}{4n^4}\right)^{-2n^2} 
\left(\frac{1-\frac{m}{2n^2}}{1+\frac{m}{2n^2}}
\right)^{m+\frac{1}{2}}\\
&=
\exp
\left(
-2n^2\ln\left(1- \frac{m^2}{4n^4}\right) +(m+1/2) \ln 
\frac{1-\frac{m}{2n^2}}{1+\frac{m}{2n^2}}
\right),
\end{aligned} \]

so that we have to prove that for all $n$ and $m=1,2,...,2n^2$ and some  constant $C_4=\ln C_3$
\begin{equation}\label{five}
-2n^2\ln \left(1- \frac{m^2}{4n^4}\right)
 +(m+1/2) 
\ln \frac{1-\frac{m}{2n^2}}{1+\frac{m}{2n^2}}
\le C_4-\frac{(m+1)^2}{2n^2}.
\end{equation}

Observe that if $m=2n^2$, then the left-hand side of (\ref{four}) is zero and then (\ref{four}) is obviously true.
For $m=1,2,...,2n^2-1$ the quantity $t=\frac{m}{2n^2}$ is positive and strictly less then one, so that we can
use the Taylor series expansions for $|t|<1$ and the functions $\ln(1-t^2)$, $\ln(1+t)$, and $\ln(1-t)$.
After some tedious but elementary computations one finds that the left-hand side $\ell$ of (\ref{five})
 has the form
$
\ell=- \frac{m^2+m}{2n^2}+R(n,m)
$
where $R(n,m)=\sum a_j(n) m^j$ is negative, because 
all the coefficients $a_j(n)$ are negative.
Now, the inequality 
$
 \ell \le C_4 -\frac{(m+1)^2}{2n^2}
$
 obviously follows 
because   
$
 -\frac{m^2+m}{2n^2}+\frac{(m+1)^2}{2}=
\frac{m+1}{2n^2}\le 1.
$
\end{proof}

\subsection{Proofs of Gaussian Log--Sobolev Inequalities}
We are now ready to prove the Theorems  \ref{LSItheoremGaussianL2}
and \ref{LSItheoremGaussianL1}. We present, with details, the proof
in the more natural $L^1$ case.

\begin{proof}[Proof of Theorem \ref{LSItheoremGaussianL1}]

By Theorem \ref{LSItheorem}  
we know that for all $n$ and $f\in{\mathcal C}^1$ 
\begin{equation}\label{LSIbin} 
\int  f\log f\, d\mu_n -\|f\|_{1,\mu_n} \log\|f\|_{1,\mu_n} \le \frac{1}{2}\int  Ef\, d\mu_n,
\end{equation}
where $\mu_n$ is the convolved Bernoulli measure considered in the previous section.  We want to show that the Central Limit Theorem with $n=(2k)^2$ applies to all the three terms of  the formula (\ref{LSIbin}).

\medskip

It is sufficient to show that the integrals $\int_0^\infty h\,d\mu_n$ restricted to $[0,\infty)$ converge to $\int_0^\infty h\,d\gamma$ for $h=f\log f, f$ and $Ef$. Indeed, using the notation $\tilde f(x)=f(-x)$, if $f$ and $\log f$ are convex, so are $\tilde f$ and $\log\tilde f$, so $f$ being log-subharmonic is equivalent to $\tilde f$ being log-subharmonic. The property $E\tilde f(x)=-xf'(-x)$ shows that on the right--hand side of (\ref{LSIbin}) and (\ref{LSIGaussL1}) we have $\int_{-\infty}^0 Ef(x)d\mu(x)=\int_0^\infty E\tilde f(x) d\mu(x)$.\\

{\it First term.} The function $\log f$ is ${\mathcal C}^1$  and convex, so it is monotone in a segment $[x_0,\infty)$.
\begin{itemize}
\item If $\lim_{x\ra\infty} \log f(x)=c$ is finite, then $\log f$ is bounded on
$[x_0,\infty)$ and therefore $f$ is bounded. Thus $f\log f$ is bounded on $[x_0,\infty)$
and on $[0,\infty)$. The convergence $\int f\log f\, d\mu_n\ra  \int f\log f\, d\gamma$ then follows from the CLT.
\item If $\lim_{x\ra\infty} \log f(x)=-\infty$, then $\lim_{x\ra\infty} f=0$
and $\lim_{x\ra\infty} f\log f=0$. As in the preceding case, 
the convergence  $\int f\log f\, d\mu_n\ra  \int f\log f \,d\gamma$   follows from the CLT.
\item In the case $\lim_{x\ra\infty} \log f(x)=+\infty$, the function $\log f$
is  increasing on  $[x_0,\infty)$, thus $f$ is also increasing on  $[x_0,\infty)$.
We can suppose that $\log f>0$ on $[x_0,\infty)$(otherwise we choose
$x_0$ bigger). Consequently $ f \log f$ is increasing on $[x_0,\infty)$. If 
  $ f$ is not constant,  the functions $f$ and $ f \log f$ are strictly increasing.
We can then apply Proposition \ref{betterCLT}.
\end{itemize}

{\it Second term.} As a positive convex   function, $f$ is bounded on $\RR^+$
or strictly increasing on an interval $[x_0,\infty)$.
The convergence $\int f\,  d\mu_n\ra  \int f\,  d\gamma$ follows respectively from the CLT
or from Proposition \ref{betterCLT}.

{\it Third term}. The function $f'$ is increasing. Therefore, if $f$ achieves any positive values then
 $f'>0$ on a certain interval  $[x_0,\infty)$. As the function $x$ is strictly 
increasing, so is the function $xf'$ on  $[x_0,\infty)$ and we apply Proposition  \ref{betterCLT}.  If, on the other hand, $f'\le 0$ on $[0,\infty)$, then there exists a constant $C$ such that $|f'|\le C$
on $\RR^+$. Consequently $|Ef(x)|\le Cx$ on $\RR^+$ and the convergence 
$\int_0^\infty Ef\, d\mu_n\ra \int_0^\infty Ef \,d\gamma$ follows from 
the  Theorem \ref{CLTepsilon}.
\end{proof}

\begin{proof}[Proof of Theorem \ref{LSItheoremGaussianL2}] 
The proof is the same as the proof of Theorem \ref{LSItheoremGaussianL1},
with $f^2$ instead of $f$. In particular, for the convergence of the third integral
$\int f Ef\,d\mu_n$, we have $f Ef=\frac{1}{2} E(f^2)$ and the reasoning
from the proof of Theorem \ref{LSItheoremGaussianL1} applies.
\end{proof}

\begin{rem} The preceding techniques clearly only apply in the one-dimensional setting.  With the techniques in this paper, we cannot address the question of whether the stronger (constant $1/2$) Logarithmic Sobolev inequality of Theorems \ref{LSItheoremGaussianL1} and \ref{LSItheoremGaussianL2} hold for Gaussian measures in higher dimensions.  In principle, they should follow from the strong hypercontractivity inequalities of Theorem \ref{hcs} via an approach like that in the proof of Theorem \ref{LSItheorem}.  As we have mentioned, there are challenging regularization issues (due to the nature of logarithmically subharmonic functions) which complicate these techniques.  Along the same lines, any measure for which the Logarithmic Sobolev Inequality holds for \lsh\ functions should also satisfy strong hypercontractive estimates (this was proved in the restricted context of holomorphic functions in \cite{g}).  These issues will be dealt with in a future publication.
\end{rem}





\begin{thebibliography}{GGS}

\bibitem{a}
An\'e, C. et al: \emph{Sur les in\'egalit\'es de Sobolev logarithmiques.} Panoramas et Synth\`eses, \textbf{10}, Soci\'et\'e 
math\'ematique de France, 2000.

\bibitem{b}
Bonami, A.: \emph{Etude des coefficients de Fourier de $L^p(G)$.} Ann. de l'Institut Fourier, \textbf{20}, 1971, 335-402.

\bibitem{c}
Carlen, E. : \emph{Some integral identities and inequalities for entire functions and their applications to the coherent state transform.}  J. Funct. Anal., {\bf 97} 1991, 231--249. 

\bibitem{Dudley}
Dudley, R.: {\em Real analysis and probability.} Revised reprint of the 1989 original. Cambridge Studies in Advanced Mathematics, 74. Cambridge University Press, Cambridge, 2002.

\bibitem{dud}
Dudley, R.: \emph{Uniform Central Limit Theorems.} Cambridge Studies in Advanced Mathematics, 63. Cambridge University Press, Cambridge, 1999. 

\bibitem{f}
Feller, W.: \emph{ An introduction to probability theory and its applications.} Vol. I. Third edition. John Wiley \& Sons, Inc., New York-London-Sydney 1968.

\bibitem{5}
Graczyk, P.; Loeb, J.; Lopez, I.;  Nowak, A.; Urbina, W.: \emph{Higher  order Riesz Transforms,  fractional derivatives,
and Sobolev spaces  for Laguerre expansions.} Journal Math. Pures et Appl., \textbf{84}, 2005, 375 - 405. 


\bibitem{GGS}
Galaz-Fontes, F.; Gross, L.; Sontz, S.: \emph{Reverse hypercontractivity over manifolds.} Ark. Math., {\bf 39} 2001, 283-309.

\bibitem{g4}
Gross, L.: \emph{Logarithmic Sobolev inequalities.} Amer. J. Math. {\bf 97} 1975, 1061-1083.

\bibitem{g} 
Gross, L.: \emph{Hypercontractivity over complex manifolds}, Acta Mathematica, \textbf{182,2}, 2000, 159-206.

\bibitem{g2}
Gross, L.: \emph{Strong hypercontractivity and relative subharmonicity.} Special issue dedicated to the memory of I. E. Segal.  J. Funct. Anal.  \textbf{190} 2002, 38--92.

\bibitem{g3}
Gross, L.  \emph{Hypercontractivity, logarithmic Sobolev inequalities, and applications: a survey of surveys.  Diffusion, quantum theory, and radically elementary mathematics},  45--73, Math. Notes,{\bf 47}, Princeton Univ. Press, Princeton, NJ, 2006.

\bibitem{Gross Grothaus}
Gross, L.; Grothaus, M.: \emph{Reverse hypercontractivity for subharmonic functions.}  Canad. J. Math. {\bf 57} 2005, 506-534.

\bibitem{GIT}
Guti\'errez, C.; Incognito, A.; Torrea, J.: \emph{Riesz transforms, $g$-functions, and multipliers for the Laguerre semigroup.} Houston J. Math. \textbf{2} 2001, 579--592.

\bibitem{haa}
Haagerup, U.: {\em The best constants in the Khintchine inequality.} Studia Math. {\bf 70} 1981, no. 3, 231--283. 

\bibitem{h}
Hormander, L.: \emph{Complex analysis in several variables.} North Holland, American Elsevier, 1973.

\bibitem{j}
Janson, S.: \emph{On hypercontractivity for multipliers of orthogonal polynomials.} Ark. Mat, \textbf{21}1983, 97-110.

\bibitem{j2}
Janson, S.: \emph{On complex hypercontractivity.}  J. Funct. Anal., {\bf 151} 1997, 270--280. 

\bibitem{k}
Kemp, T.: {\em Hypercontractivity in non-commutative holomorphic spaces.} Commun. Math. Phys. {\bf 259} 2005, 615-637.

\bibitem{kl}
Kemp, T.; Loeb, J.J.: {\em Strong logarithmic Sobolev inequalities for log-subharmonic functions.}  Preprint.

\bibitem{n}
Nelson, E.: \emph{The free Markov field.} J. Funct. Anal., {\bf 12} 1973, 211-227.

\bibitem{O}
Olver, F. W. J.: {\em Asymptotics and special functions.} Academic Press, New York - London, 1974.

\bibitem{Sudallev}
Sadullaev, A.; Madrakhimov, R.: {\em Smoothness of subharmonic functions.} (Russian)  Mat. Sb. 181  (1990),  no. 2, 167--182;  translation in  Math. USSR-Sb.  69  (1991),  no. 1, 179--195

\bibitem{z}
Zhou, Z.: {\em The contractivity of the free Hamiltonian semigroup in the $L_p$ space of entire functions.}  J. Funct. Anal., {\bf 96} 1991, 407--425. 

\bibitem{Zyg}
Zygmund, A.: {\em Trigonometric series}, Vol. I, II. Reprint of the 1979 edition. Cambridge Mathematical Library. Cambridge University Press, Cambridge, 1988.



\end{thebibliography}
\end{document}